\newtheorem{theorem}{Theorem}
\newtheorem{proposition}[theorem]{Proposition}
\newtheorem{lemma}[theorem]{Lemma}
\newtheorem{corollary}[theorem]{Corollary}
\theoremstyle{definition}
\newtheorem{definition}[theorem]{Definition}
\newtheorem{remark}[theorem]{Remark}
\definecolor{verde}{RGB}{20,150,100}
\definecolor{purple}{RGB}{200,30,200}
\newcommand{\ka}{\mathcal K}
\newcommand{\caone}{\ka_\infty}
\newcommand{\cafour}{\gamma}
\newcommand{\cafive}{\ka_1}
\newcommand{\casix}{\ka_3}
\newcommand{\caseven}{\ka_2}
\newcommand{\cakroger}{\ka_{kroger}}
\newcommand\reallywidecheck[1]{%
\savestack{\tmpbox}{\stretchto{%
  \scaleto{%
    \scalerel*[\widthof{\ensuremath{#1}}]{\kern-.6pt\bigwedge\kern-.6pt}%
    {\rule[-\textheight/2]{1ex}{\textheight}}%WIDTH-LIMITED BIG WEDGE
  }{\textheight}% 
}{0.5ex}}%
\stackon[1pt]{#1}{\scalebox{-1}{\tmpbox}}%
}
\def\R{\mathbb{R}}
\def\N{\mathbb{N}}
\newcommand{\sm}{\setminus}
\newcommand{\vps}{\varepsilon}
\newcommand{\Om}{\Omega}
\newcommand{\om}{\omega}
\newcommand{\sq}{\subseteq}
\newcommand{\ra}{\rightarrow}
\def \e{\varepsilon}
\def\ds{\displaystyle}
\newcommand{\nif}{{n \to + \infty}}
\def\RR{\mathbb{R}}
\newcommand{\abs}[1]{{\left|#1\right|}}
\newcommand{\norma}[1]{{\left\Vert#1\right\Vert}}
\newcommand{\vphi}{\varphi}
\begin{document}

\title[]{ 
  A sharp quantitative nonlinear Poincar\'e inequality  on convex domains  
}

\author[]{Vincenzo Amato, Dorin Bucur, Ilaria Fragal\`a}

\thanks{}

\address[Vincenzo Amato]{
Dipartimento di Matematica e Applicazioni ``R. Caccioppoli'', 
Universit\`a degli studi di Napoli Federico II \\ Via Cintia, Complesso Universitario Monte S. Angelo\\ 
80126 Napoli (Italy)}
\email{vincenzo.amato@unina.it}

\address[Dorin Bucur]{
Universit\'e  Savoie Mont Blanc, Laboratoire de Math\'ematiques CNRS UMR 5127 \\
  Campus Scientifique \\
73376 Le-Bourget-Du-Lac (France)
}
\email{dorin.bucur@univ-savoie.fr}

\address[Ilaria Fragal\`a]{
Dipartimento di Matematica \\ Politecnico  di Milano \\
Piazza Leonardo da Vinci, 32 \\
20133 Milano (Italy)
}
\email{ilaria.fragala@polimi.it}

\keywords{Neumann eigenvalue, sharp quantitative form, Poincar\'e inequality. }
\subjclass[2010]{35P15, 49R05} 
\date{\today}

%35P15 Estimates of eigenvalues in the context of PDEs
%49R05 Variational methods for eigenvalues of operators

\maketitle
\begin{abstract}  
 For any $p \in ( 1, + 
  \infty)$,  we  give a new inequality  for the  first nontrivial Neumann eigenvalue   $\mu _ p (\Om, \phi)$   of 
  the $p$-Laplacian on a convex domain  $\Om \subset \R ^N$ with a power-concave weight $\phi$. 
  Our result improves the classical estimate in terms of the diameter, first stated in a seminal paper by Payne and Weinberger: we add in the lower bound an extra term depending on the second largest John semi-axis of $\Om$ (equivalent to
a power of the width in the special case $N = 2$). The  power exponent in the extra term is sharp, and 
  the constant in front of it is explicitly tracked, thus
 enlightening  the interplay between  space   dimension, nonlinearity and  power-concavity. 
Moreover, we attack the stability question:  we prove  that, if $\mu _ p (\Om, \phi)$ is close to the lower bound,   then   $\Om$  is close to a   thin  cylinder, and  $\phi$   is close  to a function which is constant   along   its axis.    As intermediate results,  
  we establish a sharp $L^ \infty$ estimate for   the associated eigenfunctions,  and 
  we determine the   asymptotic   behaviour of    $\mu _ p (\Om, \phi)$   for varying   weights and     domains, including the case of collapsing geometries.  
\end{abstract}

\section{Introduction }
Given an open bounded convex set  $\Om \subset \R^N$    and  a  positive function  $\phi\in L ^ 1(\Om) $,  let 
\begin{equation}
	\label{mu2}
	\mu_p(\Omega,\phi):=\inf_{\substack{u\in W^{1, p}(\Om,\phi)\sm \{0\}\\ \int_\Omega \phi  |u | ^ { p-2} u =0}} \frac{\displaystyle{\int_\Omega \phi \abs{\nabla u}^p}}{\displaystyle{\int_\Omega \phi |u |^p}}\,,
	\end{equation}
 where  $p$ is a real exponent in $( 1, + \infty)$ and $W^{1, p}(\Om,\phi) $  is usual weighted Sobolev space.  
	When $ \phi \equiv 1$, $\mu_p(\Omega,\phi) $ (that we shall denote by $\mu_p(\Om)$)  is the first nonzero eigenvalue of the Neumann $p$-Laplacian on $\Om$, or the inverse of the optimal constant for the   Poincar\' e inequality in  $W ^ { 1, p}(\Om)$. 

In 1960, Payne and Weinberger  gave  a sharp lower bound for $\mu_2(\Om)$ in terms of the diameter  $D _\Om$   of the set $\Omega$ (see   \cite{PW, Bebendor}). 
Their inequality was extended to the case of the $p$-Laplacian eigenvalue $\mu _ p ( \Om)$ with $p \geq 2$ 
(see \cite{ENT}), and then  to the case of the weighted $p$-Laplacian $\mu _ p ( \Om, \phi)$ 
for any $p \in (1, + \infty)$ and $\phi$ log-concave (see \cite{FNT12}),   in which it reads  
\begin{equation}
	\label{paynewein}
	\mu_p(\Omega, \phi)\ge \Big  (\frac{\pi_p}{ D_\Om} \Big ) ^ p  \, , \qquad \pi _p:= 2 \pi \frac{(p-1) ^ {\frac 1 p}}{p \sin  \big (\frac \pi p \big ) } 
	,
\end{equation}

  We remind that $\pi _p$ agrees     with  the first positive zero of a generalized $p$-sine function (see \cite{Lind95}), and 
gives the optimal constant in the one-dimensional Poincar\'e inequality for $\phi \equiv 1$ (see \cite{BBCDG, RW99}), i.e., denoting by  $I _d$  an interval of length $d$, 
$$\Big (  \frac{\pi _p}{d} \Big ) ^p =\inf_{\substack{u\in W^{1, p}( I_d)\sm \{0\}\\ \int_{I_d}  |u | ^ { p-2} u =0}} \frac{\displaystyle{\int_{I_d}  \abs{u'}^p}}{\displaystyle{\int_{I_d}  |u |^p}}\,.$$

For $p = 2$,  and assuming the weight $\phi$ to be $(\frac 1 m )$-concave, in our previous paper \cite{ABF24} we improved the inequality \eqref{paynewein} into the following quantitative form, which in the particular case $N = 2$ and $\phi \equiv 1$,  had been conjectured in 2007 by Hang and Wang \cite{hangwang}:
\begin{equation}
		\label{quantitative3d}
		\mu_2(\Omega, \phi)\ge \Big  (\frac{\pi}{ D_\Om} \Big ) ^ 2  +\overline c \frac{a_2^2}{ D_\Om ^4}\,,
	\end{equation}
Here $a_2$ denotes the second largest John semi-axis of $\Om$   (see e.g. \cite{guzman} for the  classical definition of John ellipsoid),   and $\overline c$ is a 
positive constant depending only on $N +m$.    Later, a different proof has been proposed 
 in dimension $N = 2$  and for $\phi \equiv 1$ in a paper  posted on ArXiv  \cite{WangXu}.   For further bibliography about about Poincar\'e-type inequalities,  we refer the reader 
to the treatise \cite{Maz11} 
and the survey paper \cite{KN15}, and enclosed references (see in particular \cite{Kla, Valtorta}).

  In this paper we attack the problems of establishing an  inequality  of the type \eqref{quantitative3d}  in the nonlinear setting, and of studying its stability, both with respect to the domain and the weight.  The main results are stated in Theorems \ref{t:Ngap} and \ref{t:stability} below. 

 In Theorem \ref{t:Ngap} we provide the 
sharp estimate with extra term related to  flatness. We stress that  it   
 requires a conceptually different approach with respect to the linear case
 and that its purpose goes beyond, as it contains an explicit  value for the constant,
in terms of the dimension, nonlinearity and power concavity.  We prove:

 \begin{theorem}\label{t:Ngap}
Let  $N\ge 2$ and $m \in \N \setminus \{ 0 \}$. There exists a  positive  constant  $ \ka_0 = \ka_0 (p,  N , m) $ such that, 
 for every open bounded convex domain $\Omega$ in $\R ^N$ with diameter $D_\Om$
and John ellipsoid of semi-axes  $a_1 \geq \dots \geq a_N$,  and 
every  $(\frac {1}{m})$-concave function $\phi : \Om \to (0,+\infty)$ 
we have
	\begin{equation}
		\label{quantitative3dp}
		\mu_p (\Omega, \phi)\ge \Big  (\frac{\pi_p}{ D_\Om} \Big ) ^ p   +     \ka_0 \frac{a_2^2}{ D _\Om ^{p+2} }.
		\end{equation}
For $N = 2$, and general $p$, $m$, an  explicit  value of the constant $ \ka_0 $ is given by 
\begin{equation} \label{f:costantefinale}   \ka _0:=  
 \frac{ \cafive  (\caseven) ^ 2 }{6(7 \cdot 16 \cdot 256  )^2}, 
\end{equation}    
where
$\ka _1$ and $\ka _2$ are  given respectively   in \eqref{f:cafive}  and  in \eqref{f:caseven} .
\end{theorem}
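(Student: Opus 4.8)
The proof follows the classical Payne--Weinberger strategy of reducing the $N$-dimensional problem to a family of one-dimensional weighted problems, but now we must \emph{quantify} the gap. First I would recall the needle-decomposition (or equivalently, the bisection/Gromov--Milman type partition) of the convex body $\Om$ into thin convex slices, each almost a segment, along the diameter direction. On each needle $N_i$, the weight $\phi$ restricted to the needle is still $(\frac1m)$-concave (after integrating the transverse variables, the one-dimensional density inherits a power-concavity with exponent depending on $m+N-1$), and the length of the needle is at most $D_\Om$. If on \emph{every} needle the one-dimensional weighted Poincar\'e constant were exactly $(\pi_p/D_\Om)^p$, we would recover \eqref{paynewein}; the point is to show that a definite fraction of the needles are genuinely shorter than $D_\Om$ --- by an amount controlled by $a_2$ --- and on those the one-dimensional constant is strictly larger, with a quantitative improvement.

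\smallskip

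The key geometric input is that since the John ellipsoid has second semi-axis $a_2$, the body $\Om$ cannot be too close to a one-dimensional segment: a positive-measure cone of directions transverse to the diameter sees chords of $\Om$ of length comparable to $a_2$. Concretely, I would show that for a fixed fraction of the needles, either the needle direction makes a nonnegligible angle with the diameter axis (so its projected length in the diameter direction is shorter), or the slicing procedure forces the needle length to be bounded by $D_\Om - c\, a_2^2/D_\Om$ or so. In either case the relevant one-dimensional length $\ell_i$ satisfies $\ell_i \le D_\Om - c\, a_2^2/D_\Om$ on a set of needles of measure bounded below. The exponent $2$ on $a_2$ (hence $p+2$ overall after expanding $(\pi_p/\ell_i)^p$) is exactly what comes out of a Pythagorean/second-order Taylor estimate, and this is why it is sharp.

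\smallskip

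The analytic core is then a \textbf{quantitative one-dimensional Poincar\'e inequality}: for a $(\frac1m)$-concave density $\psi$ on an interval $I_\ell$ of length $\ell$, one has
\begin{equation*}
\inf \frac{\int_{I_\ell}\psi|u'|^p}{\int_{I_\ell}\psi|u|^p} \ \ge\ \Big(\frac{\pi_p}{\ell}\Big)^p,
\end{equation*}
and I would need the \emph{strict} improvement when $\ell$ is replaced by $D_\Om$ with a loss $\delta = c\,a_2^2/D_\Om$: namely $(\pi_p/\ell)^p \ge (\pi_p/D_\Om)^p + c'\,\pi_p^p\,\delta/D_\Om^{p+1}$, which is just monotonicity of $t\mapsto t^{-p}$ together with the gap $\ell \le D_\Om-\delta$. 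Combining: letting $v$ be an optimal eigenfunction for $\mu_p(\Om,\phi)$, restrict it to each needle; since $\int_{N_i}\phi |v|^{p-2}v$ need not vanish on each needle, one first uses the standard argument (as in Payne--Weinberger and \cite{FNT12}) that the partition can be chosen so that the zero-mean condition holds on each needle --- this is the bisection argument applied to the function $\phi|v|^{p-2}v$. Then
\begin{equation*}
\int_\Om \phi|\nabla v|^p \ \ge\ \sum_i \int_{N_i}\phi|v'|^p \ \ge\ \sum_i \Big(\frac{\pi_p}{\ell_i}\Big)^p \int_{N_i}\phi|v|^p
\end{equation*}
and splitting the sum into the "short" needles (where $\ell_i \le D_\Om - \delta$) and the rest gives
\begin{equation*}
\mu_p(\Om,\phi) \ \ge\ \Big(\frac{\pi_p}{D_\Om}\Big)^p + c'\,\frac{\pi_p^p\,\delta}{D_\Om^{p+1}}\cdot\frac{\int_{\text{short}}\phi|v|^p}{\int_\Om\phi|v|^p}.
\end{equation*}
The remaining — and I expect \emph{hardest} — step is to bound below the mass fraction $\int_{\text{short}}\phi|v|^p / \int_\Om\phi|v|^p$ independently of $v$; this requires the sharp $L^\infty$-bound on eigenfunctions announced in the abstract (to prevent $v$ from concentrating entirely on the long needles) together with the fact that a lower bound on $a_2$ forces a lower bound on the total measure, hence $\phi$-mass, of the short needles. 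Once that fraction is bounded below by a constant depending only on $p,N,m$, one collects all constants — tracking through the needle decomposition, the one-dimensional estimate, the $L^\infty$-bound, and the geometric chord estimate — to obtain $\ka_0(p,N,m)$, and in the planar case $N=2$ one carries out the bookkeeping explicitly to arrive at the stated value \eqref{f:costantefinale}, where $\ka_1$ and $\ka_2$ are the constants from the component estimates \eqref{f:cafive} and \eqref{f:caseven}.
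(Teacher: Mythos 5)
Your proposal correctly identifies the Payne--Weinberger needle decomposition, the bisection/equipartition to enforce the zero-mean condition on each cell, and the need for an $L^\infty$ bound on eigenfunctions. However, there is a genuine conceptual gap in the mechanism you propose for extracting the extra term, and it is precisely the step the paper identifies as its main novel ingredient.

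You claim that ``for a fixed fraction of the needles, the relevant one-dimensional length $\ell_i$ satisfies $\ell_i \le D_\Omega - c\, a_2^2/D_\Omega$ on a set of needles of measure bounded below,'' and you propose to derive the entire extra term from this length deficit via the monotonicity of $t\mapsto (\pi_p/t)^p$. This covers what the paper's proof calls \emph{Case 1} of Lemma~\ref{l:lemma1}, but it is not true in general (not with a universal $c$): for the complementary situation one must handle cells whose diameter is within $c_0 a_2^2/D_\Omega$ of $D_\Omega$ for an \emph{arbitrarily small} $c_0$, and in that regime the length deficit alone gives no usable improvement. This is not an artifact of the method --- thin convex bodies genuinely produce near-full-length needles. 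The correct second mechanism is that such a near-full-length thin cell has a cross-sectional profile $h_i$ that cannot be nearly constant, and the extra Poincar\'e gap then comes from a \emph{refined one-dimensional inequality} of the form
\[
\mu_p(I,h\phi) \;\ge\; \pi_p^p \;+\; \kappa_1 \min_{[\kappa_3,\,1-\kappa_3]}\Bigl(\tfrac{h'}{h}\Bigr)^2,
\]
which is Proposition~\ref{p:1d} of the paper and is proved by a rather delicate analysis (rearrangement inequalities, Moser-type estimates, sign arguments for the eigenfunction). Your plan has no analogue of this and cannot close the hard case.

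There is a second, related gap: your use of the $L^\infty$ eigenfunction bound is to control a ``mass fraction on short needles,'' but this is not how the bound is actually needed. In the paper it is used to prove Proposition~\ref{p:proparea} (each cell of the $L^p$-equipartition has area comparable to $|\Omega|/n$), which is then essential to a geometric contradiction argument (Case~2.2.2.2): if most cells are full-length, have nearly constant profile \emph{and} have controlled area, then a Pythagorean/Thales argument forces a diagonal of length $>D_\Omega$ inside $\Omega$, which is impossible. Without the refined 1D inequality and this geometric argument the case analysis does not terminate, and the explicit constant \eqref{f:costantefinale} --- which is precisely a product/combination of the constants appearing in Proposition~\ref{p:1d} and Proposition~\ref{p:proparea} --- cannot be produced.
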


\medskip

The proof  of Theorem \ref{t:Ngap} is given in Section \ref{sec:proof1}.   The reason why the nonlinear setting requires a completely different  approach with respect to the linear one is that,  
 for any $p\not=2$, in one dimension of the space,  there is no equivalence between the weighted Neumann problem and a Dirichlet problem associated with a suitable measure potential. Such equivalence played a key role both in original proof by Payne-Weinberger proof and in the quantitative analysis we carried over in \cite{ABF24}. 
Therefore   we have  to deal directly with the Neumann problem.   To that aim,  
 our main novel ingredient   
  is the one-dimensional result stated in Proposition \ref{p:1d},   which represents the  hard core of the proof, and  
is somehow inspired from the approach adopted in \cite{ENT, FNT12}. 
While the passage to higher dimensions is similar to the linear case (so when  it is the case we shall refer the reader to \cite{ABF24}), 
the proof  also   requires   further new  intermediate results,
	some of them of possible independent interest.    Specifically, 
	 we need to study  the 
	 asymptotical behaviour of 
Neumann eigenvalues under variation of the domain and of the weight (see Section \ref{section2}), 
and to prove a sharp $L^ \infty$ estimate 
for eigenfunctions of the weighted Neumann $p$-Laplacian (see Section \ref{sec:infty}).  

Below we give several observations about  the  inequality  \eqref{quantitative3dp}.

\begin{remark}[sharpness of the exponent  in the extra term]    The power $2$ of the second John semiaxis in   the inequality \eqref{quantitative3dp}   cannot be improved, independently of  $p$.   This can be seen through the following example,  given for simplicity in dimension $N = 2$ and for $\phi = 1$. Let $\Om _\e = (0, 1)\times (0, \e)$. Taking $u (x, y) = \overline u (x)$ as a test function for $\mu _p (\Om _\e)$,  where $\overline u$ is the  first eigenfunction for $\mu _p ( I)$,  
we obtain 
\begin{equation}\label{f:su}
\mu_p (\Om _ \e) \leq  (\pi _ p ) ^ p\,.
\end{equation}
Assume by contradiction that, for some $\delta, \overline c  >0$, 
\begin{equation}\label{f:giu}\mu_p (\Om _ \e) \geq  \Big  (\frac{\pi_p}{ D_{\Om_\e}} \Big ) ^ p   +    \overline c \frac{a_2^{2 -\delta}}{ D _{\Om  _\e}^{p+2 - \delta} }
 \,.\end{equation}
  By combining   the inequalities \eqref{f:su} and \eqref{f:giu},   and  taking into account that $D _{\Om _\e} = ( 1 + \e ^ 2) ^{ \frac{1}{2} } $ and $a _ 2  \approx \e$,  we obtain $\e ^ 2 \geq C \e ^ { 2 - \delta}$ for some positive constant $C$,  which is a contradiction by the arbitrariness of $\e >0$. 
 \end{remark}

\begin{remark}[power-concavity versus log-concavity]    If   the weight $\phi$ is merely log-concave,  
the inequality \eqref{quantitative3dp} is in general false.
This can be seen through a counterexample, given for simplicity  in dimension $N = 2$ and for  $p=2$. We consider the sequence of log-concave weights defined  by 
$$
\phi_n  (x, y)= n e^{-(ny)^2}\qquad \forall (x, y) \in \R ^2
$$
and we claim that, if $B_1$ is the Euclidean unit ball in $\R^2$,
$$\lim _{n \to +\infty} \mu _ 2 (B_1, \phi _n)  = \frac{\pi^2}{4}= \frac{\pi^2}{D_{B_1 } ^ 2}\,.$$ 
By symmetry, we can  
 take $u(x,y)= \sin (\frac{ \pi}{2}x)$ as test function for $\mu_2({ B_1},\phi_n)$.
Then, 
 passing to the limit as $n \to + \infty$,  by dominated convergence we obtain
 $$
\frac{\pi^2}{4}\le \limsup_{n \to + \infty}  \mu_2({B_1},\phi_n) \leq\frac{\pi^2}{4} \frac{\displaystyle{\int_{B_1 }  \cos^2 (\frac{ \pi}{2}x)   \phi_n   }}{\displaystyle{\int_{B_1 }  \sin^2 (\frac{ \pi}{2}x)   \phi_n   }}
= \frac{\pi^2}{4} \frac{\displaystyle{\sqrt{\pi} \int_{-1}^1 \cos^2 (\frac{ \pi}{2}x)\, dx}}{\displaystyle{\sqrt{\pi} \int_{-1}^1 \sin^2 (\frac{ \pi}{2}x)\, dx}} =\frac{\pi^2}{4}\,.
 $$
\end{remark} 

\begin{remark}[relaxed power concavity assumption]
Since the class of $\alpha$-concave functions is monotone decreasing under inclusion with respect to $\alpha$, 
Theorem   \ref{t:Ngap}       continues to hold 
  for any $\alpha$-concave positive function $\phi$, being $\alpha\in (0, 1]$, and in this case the constant $\overline c$ depends only on $p$ and $N + \big [  \frac{1}{\alpha}  \big  ]+ 1 $, where $[\, \cdot \,  ]$ denotes the integer part. 
\end{remark}

  \begin{remark}[on the constant in the extra term]  In tracking the expression of  the constant $\mathcal K _0$ in the inequality \eqref{quantitative3dp}, we do not aim at optimality. However, we believe that the expression
 given in \eqref{f:costantefinale} may be of interest for later applications,  in particular for numerical
purposes such as controlling the error in polynomial approximations in Sobolev spaces (see \cite{Ver99}).  
Let us point out that, according to  a Blaschke diagram obtained via numerical experiments  for $N=2, p=2, \phi\equiv 1$, 
the optimal value of $\ka _0$   should lie between $5$ and $6$  (which is quite far from the explicit value coming from \eqref{f:costantefinale}). 
Determining such optimal value, as well as  understanding the geometries leading to it,   remains a  very challenging question, which falls  out of the scopes of this  paper.  
  \end{remark}

\bigskip
 
We now turn attention to the  stability question, which can be formulated as follows: is it true that, 
if  $\mu_p(\Om, \phi)$ is close to $\big ( \frac{\pi_p}{D_\Om} \big ) ^ p$ for a convex set $\Om$,
in some sense $\Om$ is geometrically close to a line segment and $\phi$ is close to a constant function?
 This kind of question is not new, e.g. something similar was asked for Polya's functional related to the Torsion rigidity in \cite{AMPS}. 
Below we provide a partial affirmative answer for $\mu_p(\Om, \phi)$, which is 
not quantitative, but just asymptotical: it states roughly  that,  whenever $\mu _ p (\Om, \phi)$ approaches $( \frac  {\pi_p }{D _\Om} ) ^ p$, the convex set  
$\Om$ tends to become a thin cylinder  and $\phi$ approaches  locally uniformly
  a function which is constant along the axis of such cylinder. 

\begin{theorem}\label{t:stability} 
	Let $N\ge 2$ and $\alpha \in (0, 1]$.   Let $\{ \Omega_n \}$ be a sequence of open bounded convex domains in  $\R^N$ having
diameter $1$,
and  John ellipsoid
centred at the origin, with semiaxes along  the cartesian directions,   of lengths
	$ a_1^n \ge \dots\ge a_N^n$.  Let $\{\phi _n\}$ be a sequence  of   positive    $\alpha$-concave  weights. Assume that
	$$
	 \mu_{p}(\Omega_n, \phi _n)   \to    \pi_p^p\,. 
	$$
Then, up to subsequences,  we have 

\begin{itemize}
	\item [(i)]  $ a_i^n  \to 0 \quad \forall   i \in \{2, \dots, N\}\,,$
	\smallskip 
	
         \item [(ii)] There exist affine transformations $T_n: \R^N\to \R^N$ such that $\{T_n(\Om_n)\}$ converges in the 
		 Hausdorff metric to a cylinder of the form $I \times K$,   being   $I= \big ( - \frac 1 2 , \frac 1 2 \big )$  and $K$ an  
	open bounded convex domain  in $\R^{N-1}$. 
	
	\smallskip 
	\item [(iii)]  $  \frac{\phi_n\circ T_n^{-1} }{\|\phi_n\|_\infty}\to  \phi$ locally uniformly in $I \times K$, where $\phi$ is constant in the first variable.
\end{itemize}
	\end{theorem}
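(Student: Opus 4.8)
The plan is to extract geometric information from the hypothesis $\mu_p(\Omega_n,\phi_n)\to\pi_p^p$ by combining the quantitative estimate of Theorem \ref{t:Ngap} with a compactness/asymptotics argument. First I would establish (i): since each $\Omega_n$ has diameter $1$, the Payne--Weinberger-type bound \eqref{paynewein} gives $\mu_p(\Omega_n,\phi_n)\ge\pi_p^p$, and the improved inequality \eqref{quantitative3dp} gives $\mu_p(\Omega_n,\phi_n)\ge\pi_p^p+\ka_0\,(a_2^n)^2$ (after reducing, via the Remark on relaxed power-concavity, to an integer $m=[1/\alpha]+1$). Passing to the limit forces $a_2^n\to0$, hence $a_i^n\to0$ for all $i\ge2$ because $a_2^n\ge a_i^n$. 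This is the easy part.

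Next I would prove (ii). After applying a suitable affine map $T_n$ that normalizes the John ellipsoid so that it is centred at the origin with axes along the coordinate directions and the longest axis along $e_1$, the sets $T_n(\Omega_n)$ are convex, contained in a fixed box (diameter comparable to $1$, other dimensions comparable to $a_2^n\to0$), so by Blaschke's selection theorem a subsequence converges in the Hausdorff metric to some convex set $\Omega_\infty$. Because the transversal semi-axes shrink to zero while the projection onto the $x_1$-axis stays of length comparable to $1$ (the John ellipsoid controls $\Omega_n$ up to dimensional factors, and the diameter is exactly $1$), the limit $\Omega_\infty$ is contained in the segment $\overline I\times\{0\}$; one must check it is in fact the full segment $I=(-\tfrac12,\tfrac12)$ — this follows since $\diam(T_n\Omega_n)$ stays close to $1$ and the transversal directions collapse, so the $x_1$-extent converges to $1$. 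To get the cylindrical structure $I\times K$ claimed in the statement, one rescales the transversal variables: set $S_n$ to be $T_n$ followed by dilation of the last $N-1$ coordinates by $1/a_2^n$; then $S_n(\Omega_n)$ stays in a fixed box, is convex, and a further subsequence converges in Hausdorff distance to a convex set which, by an argument on sections (each $x_1$-section of $\Omega_n$ is a convex set whose inradius-to-diameter ratio is controlled, and the sections vary continuously), is a product $I\times K$. I would invoke here the asymptotic analysis of $\mu_p$ under collapsing geometries developed in Section \ref{section2}: the convergence $\mu_p(\Omega_n,\phi_n)\to\pi_p^p$ is exactly the statement that, in the limit, the eigenvalue is governed by the one-dimensional problem on $I$, which is consistent only if the rescaled limit is a genuine cylinder rather than a more degenerate object (e.g. a cone), forcing $K$ to be nondegenerate in $\R^{N-1}$.

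For (iii) I would use the sharp $L^\infty$ estimate for eigenfunctions from Section \ref{sec:infty} together with the normalization $\|\phi_n\|_\infty=1$ (dividing $\phi_n$ by its sup does not change $\mu_p$). The functions $\psi_n:=\phi_n\circ S_n^{-1}$ are $\alpha_n$-concave on $S_n(\Omega_n)$ with sup norm $1$; a normalized family of $\alpha$-concave functions bounded above is locally uniformly bounded and locally equi-Hölder on compact subsets of the interior, so by Arzelà--Ascoli a subsequence converges locally uniformly to some $\phi$ on $I\times K$. It remains to show $\phi$ is independent of $x_1$. Here is where I expect the main obstacle: I would argue by contradiction. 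If $\phi$ genuinely depended on $x_1$, then using $\phi$ together with a transversally-constant competitor one could build, via the asymptotics of Section \ref{section2}, a test function showing $\liminf\mu_p(\Omega_n,\phi_n)$ is strictly below $\pi_p^p$ — because a one-dimensional weighted Neumann problem on $I$ with a nonconstant $(\tfrac1m)$-concave weight has first eigenvalue strictly smaller than $\pi_p^p$ (the constant weight being the equality case in the one-dimensional bound). Contradiction. Carrying out this last contradiction rigorously — relating $\mu_p(\Omega_n,\phi_n)$ to the one-dimensional weighted eigenvalue with weight $\int_K\phi\,dx'$ on $I$, and controlling the transversal contributions as $a_2^n\to0$ — is the delicate point, and it is precisely what the $\Gamma$-convergence / asymptotic results of Section \ref{section2} are designed to handle; I would cite them to close the argument, in analogy with the linear treatment in \cite{ABF24}.
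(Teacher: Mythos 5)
Your item (i) is fine — indeed, quoting Theorem~\ref{t:Ngap} (with the relaxed power-concavity remark) is a cleaner route than the paper's contradiction-via-rigidity, and buys you the same conclusion. However, both (ii) and (iii) have real gaps.

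For (ii), the claim that the rescaled limit is a product $I\times K$ is asserted but not proved. After rescaling the transversal variables, Blaschke selection gives you a convex limit, but that limit could a priori be a cone, a double cone, a lens, or any convex body whose $x_1$-sections are not all congruent; ``inradius-to-diameter ratio is controlled'' gives no such rigidity. The paper's argument for the product structure goes through a chain you are missing: one first shows the cross-sectional volume $\Phi(x_1)=\mathcal H^{N-1+m}(S_{x_1})$ of the lifted set $\widetilde\Om$ is \emph{constant} (because $\mu_p(I,\Phi)=\pi_p^p$ forces equality in the one-dimensional Poincar\'e estimate, and by inspection of the proof of Proposition~\ref{p:1d} — specifically the term $|\psi-\kappa|$ in \eqref{f:inspection} — equality forces $(\log\Phi)'\equiv0$). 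Constancy of $\Phi$ plus the equality case of the Brunn--Minkowski inequality then forces all the sections $S_{x_1}$ to be \emph{translates} of one another by a linear function $x_1\mapsto x_1 v$. Finally, the hypothesis that the John ellipsoid of $\Om_n$ is centred at the origin with axes along the coordinate directions forces $v=0$. Without the Brunn--Minkowski equality step, there is no argument that the limit set is a cylinder.

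For (iii), your contradiction argument has a sign error and a logical mismatch. You claim a nonconstant $(\frac1m)$-concave weight on $I$ would make the first Neumann eigenvalue ``strictly smaller than $\pi_p^p$'' and lead to $\liminf\mu_p(\Omega_n,\phi_n)<\pi_p^p$; but the Payne--Weinberger bound \eqref{paynewein} gives $\mu_p(\Omega_n,\phi_n)\ge\pi_p^p$ unconditionally (since $D_{\Omega_n}=1$), so such a test function cannot exist, and the rigidity in the $1$-d problem says a nonconstant log-concave weight makes the eigenvalue strictly \emph{larger}, not smaller. Moreover, the rigidity of the $1$-d problem applies to the effective weight $\Phi$ (the sectional volume), not directly to $\phi$; constancy of $\Phi$ does not by itself tell you $\phi$ is independent of $x_1$. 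In the paper, the independence of $\phi$ from $x_1$ drops out \emph{after} the sections $S_{x_1}$ have been shown equal (the Brunn--Minkowski + John normalization step), because the sections encode $\phi$ through \eqref{f:sections}. You cannot decouple (ii) from (iii) the way your proposal does.
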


\medskip 
 The proof of Theorem \ref{t:stability} is given in Section \ref{sec:proof2}.   Basically, it relies on the asymptotical results given in Section \ref{section2},  but it demands also a specific geometric argument  which involves the equality case of Brunn-Minkowski inequality.

\medskip

\section{Convergence of Neumann eigenvalues  for varying domains and weights }
\label{section2}

The stability of the Neumann spectrum under domain variation has been widely studied in the literature (see for instance \cite{Ar95, BuLa07,Ftouhi}). However, for our specific purposes, which include dealing with varying weights (possibly non uniformly elliptic) and singular geometric behaviors  (possibly degenerating convex sets), we did not find any suitable reference in the literature. Thus, 
  in the next result we establish the asymptotics of  
Neumann eigenvalues on varying domains and 
with varying  weights, including  the singular case of 
collapsing domains
(namely, domains which asymptotically ``loose'' some dimensions). 
  In particular, we are going to enlighten the interplay between the concavity \ exponent   of the weights and 
  the number of  collapsed dimensions.

Let us mention that some general upper semicontinuity results for density varying Neumann eigenvalue problems can be found in  \cite{BMO23}; nevertheless,  they do not apply in our framework since the definition of   degenerate eigenvalues adopted therein is slightly different.

As a   preliminary    step, let us  rephrase for any $p \in ( 1, + \infty)$
 the following lemma, which was obtained for $p = 2$ in \cite[Lemma 42]{ABF24}.

\begin{lemma} 
For any open bounded convex domain $\Om \subset \R ^N$, any   positive weight 
$\phi \in L ^ 1 (\Om)$
which is $\alpha$-concave  for some $\alpha \in (0,1]$, and any $p \in ( 1, + \infty)$, the
embedding 
$W^{1,p} (\Om, \phi) \hookrightarrow L^p(\Om,\phi)$
is compact. 
\end{lemma}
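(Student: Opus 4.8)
The plan is to reduce the weighted compact embedding to a standard (unweighted) one by exploiting the structure of $\alpha$-concave weights on convex sets. The starting observation is that, since $\phi$ is $\alpha$-concave with $\alpha\in(0,1]$ and positive on $\Om$, the function $\phi^\alpha$ is concave on $\Om$; in particular $\phi$ is locally bounded away from $0$ and $+\infty$ on every compact subset of $\Om$, and $\phi$ is bounded on all of $\Om$ (a concave positive function on a bounded convex set is bounded above, hence so is $\phi=(\phi^\alpha)^{1/\alpha}$). The only possible degeneracy is that $\phi$ may vanish on $\partial\Om$; the whole issue is to control the embedding uniformly near the boundary.

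First I would fix an interior exhaustion $\Om_k\Subset\Om$ with $|\Om\setminus\Om_k|\to 0$, together with the bound $c_k\le\phi\le C$ on $\Om_k$ where $c_k>0$. On each $\Om_k$ the weighted norm is equivalent to the unweighted $W^{1,p}(\Om_k)$ norm, so by Rellich--Kondrachov a bounded sequence $\{u_n\}\subset W^{1,p}(\Om,\phi)$ has, via a diagonal argument, a subsequence converging strongly in $L^p_{\mathrm{loc}}(\Om)$ and weakly in $W^{1,p}(\Om,\phi)$; call the limit $u$. The task is then to upgrade $L^p_{\mathrm{loc}}(\Om,\phi)$ convergence to $L^p(\Om,\phi)$ convergence, i.e.\ to show $\int_{\Om\setminus\Om_k}\phi|u_n-u|^p\to 0$ uniformly in $n$ as $k\to\infty$. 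Here I would use a quantitative ``thin-shell'' estimate: because $\Om$ is convex and bounded, the set $\Om\setminus\Om_k$ lies in an $\varepsilon_k$-neighborhood of $\partial\Om$ with $\varepsilon_k\to 0$, and one can cover it by finitely many Lipschitz coordinate patches in each of which a one-dimensional Poincar\'e--type (Hardy) argument gives
\[
\int_{\Om\setminus\Om_k}\phi\,|v|^p \;\le\; C(\Om)\,\varepsilon_k^p \int_{\Om} \phi\,|\nabla v|^p \;+\; C(\Om)\,\varepsilon_k\,\|\phi\|_\infty\!\!\int_{\partial\Om}\!|v|^p\,d\mathcal H^{N-1},
\]
applied to $v=u_n-u$; the boundary trace term is harmless because the trace operator $W^{1,p}(\Om)\to L^p(\partial\Om)$ is bounded on the fixed convex set $\Om$, and $\phi$ is bounded, so the right-hand side is $\le C\varepsilon_k^{\min(1,p)}(\|u_n\|_{W^{1,p}(\Om,\phi)}^p + \|u\|_{W^{1,p}(\Om,\phi)}^p)$, which tends to $0$ uniformly in $n$. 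Combining this with the interior strong convergence yields $u_n\to u$ in $L^p(\Om,\phi)$.

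Alternatively — and this is probably the cleanest route, and the one matching the reference to \cite[Lemma 42]{ABF24} — I would simply mirror the $p=2$ proof verbatim, since nothing there used the Hilbert structure: the argument presumably factors through the inequality $\int_{\Om}\phi|v-\bar v|^p\le C(\Om,\phi)\int_\Om\phi|\nabla v|^p$ (a weighted Poincar\'e--Wirtinger inequality, valid once one knows $\phi$ is $\alpha$-concave, e.g.\ as a consequence of \eqref{paynewein} with the power-concave weight) plus the compactness of the embedding on interior subdomains, and then a boundary-layer estimate as above. So the skeleton is: (1) reduce to interior compactness by Rellich on $\Om_k$; (2) control the boundary layer $\Om\setminus\Om_k$ uniformly; (3) conclude by a diagonal subsequence. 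The main obstacle is step (2): making the boundary-layer bound genuinely \emph{uniform in $n$} while allowing $\phi\to 0$ at $\partial\Om$. The resolution is that the weighted Dirichlet energy $\int_\Om\phi|\nabla u_n|^p$ is bounded by hypothesis, and the weight only appears multiplicatively with the (bounded) $\|\phi\|_\infty$ in the trace term and with $\varepsilon_k^p$ in the gradient term, so the vanishing of $\phi$ near $\partial\Om$ actually \emph{helps} rather than hurts; the delicate point is purely the geometry of the thin shell of a general bounded convex set, which is handled by convexity (Lipschitz boundary with controlled constants) exactly as in the $p=2$ case.
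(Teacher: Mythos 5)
Your proposal takes a genuinely different route from the paper, and the route you propose has a gap.

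The paper's proof is a short lifting argument: set $m\in\N$ with $\frac1m\le\alpha$ and
\[
\widetilde{\Omega } := \Big\{ (x,y)\in\R^N\times\R^m :\ x\in\Omega,\ \|y\|_{\R^m}<\omega_m^{-1/m}\phi^{1/m}(x)\Big\}\subset\R^{N+m}.
\]
Since $\phi^{1/m}$ is concave, $\widetilde\Om$ is a bounded convex set, and the map $u(x)\mapsto \tilde u(x,y):=u(x)$ is an isometry from $W^{1,p}(\Om,\phi)$ into $W^{1,p}(\widetilde\Om)$ (and likewise for the $L^p$ norms, by Fubini). Compactness of the \emph{unweighted} Rellich embedding $W^{1,p}(\widetilde\Om)\hookrightarrow L^p(\widetilde\Om)$ on the convex set $\widetilde\Om$ then transfers verbatim; the limit is independent of $y$ and gives the required strong limit in $L^p(\Om,\phi)$. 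Nothing in this argument is specific to $p=2$; indeed this is exactly what \cite[Lemma 42]{ABF24} does, so your guess that the $p=2$ proof ``factors through a weighted Poincar\'e--Wirtinger inequality plus interior compactness and a boundary layer'' is not what that reference contains.

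Your own boundary-layer scheme has a concrete flaw in step (2). In the thin-shell estimate you invoke the boundedness of the trace operator $W^{1,p}(\Om)\to L^p(\partial\Om)$, and then bound the trace term by $\|u_n\|_{W^{1,p}(\Om,\phi)}^p+\|u\|_{W^{1,p}(\Om,\phi)}^p$. But when $\phi$ vanishes on $\partial\Om$ (which is exactly the interesting case here), $W^{1,p}(\Om,\phi)$ is \emph{strictly larger} than $W^{1,p}(\Om)$: one can have $\int_\Om\phi|\nabla u|^p<\infty$ with $\int_\Om|\nabla u|^p=\infty$ (already on $\Om=(0,1)$ with $\phi(x)=x(1-x)$ and $u(x)=x^{-a}$ for suitable small $a>0$). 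So $v=u_n-u$ need not lie in $W^{1,p}(\Om)$, the unweighted trace need not exist, and the quantity $\int_{\partial\Om}|v|^p\,d\mathcal H^{N-1}$ on your right-hand side is not controlled by the weighted norm. The vanishing of $\phi$ near the boundary does not ``help'' here; it is precisely what disconnects the weighted energy from the unweighted trace. To repair the argument one would need a genuinely \emph{weighted} Hardy-type inequality in the collar, pivoting on an interior slice rather than on the boundary trace, and exploiting quantitatively the power-concave decay of $\phi$ toward $\partial\Om$; that is possible but considerably more delicate than what you sketch, and it is exactly the kind of work the paper's lifting trick is designed to avoid.
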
 
\proof  
Let $m \in \N \setminus \{ 0 \}$ such that $\frac 1m \le \alpha$.  Let $\widetilde{\Omega} \sq \R^{N+m}$ be defined by
	 $$
	 \widetilde{\Omega } = \left\{ (x,y) \in \R^{N}\times \R^{m} : \, x \in \Omega ,\,  \norma{y}_{\R^m} <\omega_m^{-1/m} \phi^{1/m}(x)\right\}\,,
	 $$
 where $\om_m$ stands for the volume of the unit ball of $\R^m$. 
 
	 Since $\phi^{ \frac{1}{m}}$ is  also concave  by the choice of $m$,  $\widetilde{\Omega}$ is  open  and convex, so that 
	 	$W^{1,p} (\widetilde{\Omega})$ is compactly embedded into $L^p(\widetilde{\Omega})$. 
Now, if  $\{u_n\}$ is a bounded sequence in $W^{1,p}(\Om,\phi)$, setting  
	$
	\tilde{u}_n(x,y):= u_n(x)$
	 we have
	 \begin{align*}
	\int_{\widetilde{\Omega}} | \tilde{u}_n| ^p(x,y) \, dxdy &= \int_{\Omega} |u_n|^p(x)  \phi (x) \, dx ,\\
		\int_{\widetilde{\Omega}} \abs{\nabla_{x,y}\tilde{u}_n}^p(x,y) \, dxdy &= \int_{\Omega} \abs{\nabla_x u_n}^p(x)  \phi(x) \, dx.
	 \end{align*}
	Then, up to subsequences, $\tilde u _n$ converges weakly in 
	$W^{1,p} (\widetilde{\Omega}) $ and strongly in $L ^ p ( \widetilde \Om)$ to a function $\tilde u \in W^{1,p}(\widetilde{\Omega})$.  
	 Since $\tilde u$ is constant in the  $y$ variable, the function $u(x): = \tilde u (x, y)$ belongs to $W^{1,p}(\Omega,\phi)$, and $u _n$ converges strongly to $u$ in $L ^ p (\Om, \phi)$. 
\qed

\bigskip

   We 
are now ready to give  the convergence result. We state it for  $p=2$,   
since in this case we can describe the behavior of the full spectrum of the Neumann Laplacian. 
 For arbitrary $p$,  the same arguments will  give the asymptotic behaviour of the first nonzero 
Neumann $p$-Laplacian eigenvalue.
 
Limited to the remaining of this section, in order to avoid cumbersome notation   with double indices,
 we simply denote by $\mu _k(\Om, \phi)$, $k\in \N$,  the 
sequence of Neumann Laplacian eigenvalue  with weight $\phi$ (tacitly meaning that we are taking $p = 2$).

 \begin{proposition}[Asymptotics of Neumann Laplace spectrum]
	\label{abf209}
	Let $\{\Omega_n\}$ be a sequence of open bounded convex sets in $\R^N$,  having their John ellipsoid centred at the origin, with semiaxes along  the cartesian directions, of lengths
	$ a_1^n \ge \dots\ge a_N^n$.   Assume that, for some  $l \leq N$,   it holds 
	$$
	\lim_\nif (a^n_1, \dots, a^n_l, a^n_{l+1} , \dots a^n_N)=(a_1, \dots, a_l, 0 , \dots 0)\,,  \qquad \text{ with }\  a _l >0\,.
	$$
	Moreover, let  $\phi_n:\Om_n \to (0, +\infty) $ be a sequence of functions, with $\|\phi_n\|_{L^\infty} =1$, which are $(\frac 1 m)$-concave for some $m \in \N \setminus \{ 0 \}$. Then
	
	\begin{itemize}
\item[(i)] There exist an open convex set $\omega \subset \mathbb{R}^l$ and a $\big (\frac{1}{N-l+m}\big )$-concave function $  \Phi : \omega \to \mathbb{R}_+$ such that 
	\begin{equation}\label{abf2061}
		\mu_k(\Omega_n,\phi_n) \to \mu_k(\omega,   \Phi  ) \qquad  \forall k \in \N \,. 
	\end{equation}
	\item[(ii)]
For every $k \in \N$,   if $u_{n,k}$ are   $L^2(\Om _n,  \phi_n )$-normalized, pairwise orthogonal, eigenfunctions for $\mu_k(\Om_n)$,  for a.e.\ $(x_1, \dots, x_l) \in \om$ and a.e.\  $(x_{l+1}, \dots, x_N)\in \R^{N-l}\! \! $  such that $(x_1, \dots, x_l, x_{l+1}, \dots, x_N) \in \Om_n$, we have
\begin{eqnarray}
	& \lim\limits_{ n \to + \infty} (a^n_{l+1}\dots a^n_N)^\frac 12 \,  u_{n,k}(x_1, \dots, x_l, x_{l+1}, \dots, x_N) =  u_{k} (x_1, \dots, x_l), &\label{abf202}
	\\  \noalign{\medskip} 
	&  \|u_k\|_{L^\infty(\om)} \le \liminf \limits_\nif   (a^n_{l+1}\dots a^n_N)^\frac 12  \|u_{n,k}\|_{L^\infty(\Om_n)}. 
	& \label{abf208}
\end{eqnarray}
where $u_k$ is a $L^2(\om,  \Phi  )$-normalized eigenfunction for $\mu_k(\om,  \Phi )$.
\end{itemize} 
\end{proposition}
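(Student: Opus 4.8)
The plan is to prove Proposition \ref{abf209} by combining the compactness lemma with a careful dimensional reduction argument, exactly analogous to the approach used for $p = 2$ in \cite{ABF24}, but now tracking the collapsing directions. First I would normalize the geometry: since the John ellipsoids are centred at the origin with axes along the coordinate directions, each $\Om_n$ is sandwiched between its John ellipsoid and the $N$-fold dilate thereof, so after rescaling the last $N-l$ coordinates by the factors $(a^n_{l+1})^{-1}, \dots, (a^n_N)^{-1}$ the rescaled domains $\widetilde\Om_n$ remain uniformly bounded and uniformly fat in all $N$ directions, and converge in Hausdorff distance (up to subsequence) to a convex body of the form $\omega \times Q$ with $\omega \subset \R^l$ and $Q \subset \R^{N-l}$. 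The change of variables multiplies the Rayleigh quotient in the collapsing directions by the small factors, which is precisely why the limit eigenvalue problem loses those directions; the correctly rescaled eigenfunctions are $\widetilde u_{n,k}(x) := (a^n_{l+1}\cdots a^n_N)^{1/2} u_{n,k}$ composed with the dilation, chosen so that $L^2$ norms with weight are preserved.

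Next I would pass to the limit in the weighted eigenvalue problem on $\widetilde\Om_n$. The key geometric device is the one in the proof of the compactness lemma above: lift each weight $\phi_n$ to a convex body $\widehat\Om_n \subset \R^{N+m}$ via $\|y\|_{\R^m} < \omega_m^{-1/m}\phi_n^{1/m}(x)$. Under the rescaling this becomes a convex body in $\R^{N+m}$ which is uniformly bounded and, after the same dilation of the collapsing coordinates, converges in Hausdorff distance to a convex body whose "unweighted Neumann spectrum" is that of the limit problem. Because the collapsing $N-l$ Euclidean directions and the $m$ lifted directions together degenerate, one reads off that the limit body has the form $\{(x,y,z) : x \in \omega,\ \|(y,z)\| \text{-type constraint}\}$, giving a limit weight $\Phi$ on $\omega$ which is $\big(\frac{1}{N-l+m}\big)$-concave — this is the place where the dimension count $N - l + m$ enters, since $l$ good directions survive while $N - l$ collapsing directions plus $m$ lifting directions merge into the new weight. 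With this in hand, the spectral convergence $\mu_k(\widetilde\Om_n) \to \mu_k(\omega, \Phi)$ for all $k$ follows by the standard Mosco-type / min-max argument for Neumann eigenvalues on Hausdorff-converging convex bodies with uniform cone conditions (this is where I would cite \cite{ABF24} rather than redo it), using the compact embedding to upgrade weak limits of eigenfunctions to strong $L^2$ limits, hence genuine eigenfunctions, and using test-function constructions (extension of limit eigenfunctions back to $\widetilde\Om_n$ via the uniform extension operators) for the reverse inequality. The $L^\infty$ bound \eqref{abf208} is then immediate from the almost-everywhere (indeed locally uniform, by elliptic regularity up to the boundary on convex sets) convergence \eqref{abf202} and Fatou.

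The main obstacle I expect is handling the degeneration cleanly in the weighted setting without uniform ellipticity: the weights $\phi_n$ may vanish at the boundary, so the bilinear forms $\int \phi_n \nabla u \cdot \nabla v$ are not uniformly elliptic and one cannot directly invoke off-the-shelf Neumann stability results (as the authors note). The lifting trick circumvents this by converting the weighted problem on $\Om_n$ into an unweighted one on $\widehat\Om_n$, but then one must verify that the lifted domains, after rescaling the collapsing directions, still satisfy a \emph{uniform} cone/fatness condition in $\R^{N+m}$ — this needs the John-ellipsoid normalization to interact correctly with the $\|y\| < c\,\phi_n^{1/m}$ constraint, and in particular one must check that $\|\phi_n\|_\infty = 1$ keeps the lifted bodies from collapsing in the $y$-directions faster than controlled. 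A secondary technical point is identifying the limit weight and its concavity exponent: one must argue that the Hausdorff limit of the rescaled $\widehat\Om_n$ is again a body of the lifted form over $\omega$, which uses that a Hausdorff limit of convex bodies all symmetric and "graph-like" in the degenerating coordinates retains that structure, and then count the total number of degenerate directions as $N - l + m$ to get $\big(\frac{1}{N-l+m}\big)$-concavity of $\Phi$. Once these two points are secured, the remainder is the routine min-max bookkeeping for which the reference to \cite{ABF24} suffices.
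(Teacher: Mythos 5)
Your overall strategy matches the paper's proof closely: rescale the collapsing coordinates by $T_n$, lift the weighted Neumann problem on $T_n(\Om_n)$ to an unweighted Neumann problem on a convex body $\widetilde\Om_n\subset\R^{N+m}$ via $\|y\|_{\R^m}<\omega_m^{-1/m}\phi_n^{1/m}(T_n^{-1}(x))$, pass to the Hausdorff limit $\widetilde\Om$ of the lifted bodies, define $\omega$ as the $\R^l$-projection of $\widetilde\Om$ and $\Phi$ as the $\mathcal H^{N-l+m}$-measure of its sections, invoke Brunn--Minkowski for the $(\tfrac{1}{N-l+m})$-concavity, and close via min-max upper/lower semicontinuity with the $(a^n_{l+1}\cdots a^n_N)^{1/2}$ normalization of the eigenfunctions. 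All of this is exactly what the paper does.

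There is, however, one genuine error in your first paragraph: you claim that the rescaled domains $T_n(\Om_n)$ converge in Hausdorff distance to a \emph{product} $\omega\times Q$ with $\omega\subset\R^l$, $Q\subset\R^{N-l}$. This is false. The John-ellipsoid sandwich only forces the Hausdorff limit to be some nondegenerate convex body $\Om\subset\R^N$; it can, for instance, be a fixed rhombus in $\R^2$ obtained as the rescaling of thinner and thinner rhombi, which is not a product. Indeed, if the limit were always a product then $\Phi$ would be constant and the $(\tfrac{1}{N-l+m})$-concavity clause in the statement would be vacuous. The paper makes no such product claim: $\omega$ and $\Phi$ are read off directly from the lifted limit $\widetilde\Om$, with $\Phi$ a genuine cross-sectional measure whose concavity comes from Brunn--Minkowski. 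Your second paragraph does then describe the lifted limit correctly as a graph-like body over $\omega$ with the $N-l+m$ degenerate directions folding into the weight, so the mistake is local and the dimension count is right; but the product claim as written would need to be deleted, not merely adjusted. Two smaller remarks: for the upper-bound side the paper does not use extension operators but rather dilates the limit test functions by $(1+\vps_n)^{-1}$ so that they restrict to $\widetilde\Om_n\subseteq(1+\vps_n)\widetilde\Om$, which is cleaner in this Hausdorff-convergence setting; and for \eqref{abf208} you do not need elliptic regularity up to the boundary, since the $L^\infty$ lower semicontinuity is elementary once you have a.e.\ (subsequential) convergence of the rescaled eigenfunctions, which follows from the strong $L^2$ convergence already in hand.
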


\begin{proof} 
	For every $n \in \N$, let us define the map     $T_n : \R^N\ra \R^N$  by
	$$
	T_n	(x_1, \dots, x_{ l  }, x_{  {l+1}   } , \dots x_N):= (x_1, \dots, x_ {  l  },  \frac{1}{a^n_{l+1}}  x_{l+1} , \dots,  \frac{1}{ a^n_{ N  }} x_N)
	$$
	There exist   nonempty balls $B _r, B _ R \subset \R ^N$   such that,  for every $n\in \N$, we have
	\begin{equation}\label{abf205}
		B_r \subset   T _n  (\Omega_n)\subset B_R.
	\end{equation}
	Let us consider the following  convex sets of $\R^{N+m}$:
\begin{equation}\label{f:tildeomegan}
	\widetilde{\Omega}_n  := \big \{ (x,y) \in \R^{N+m} \ :\  \, x \in    T_n(\Omega_n) , \, \norma{y}_{\R^m} <\omega_m^{-\frac 1 m} \phi_n^{\frac{1}{m} }(T_n^{  -1}(x))  \big \}.
\end{equation}
We have 	\begin{equation}\label{abf206}
	  B_r\times  B_{r'}  \subset   \widetilde \Omega_n  \subset  B_R  \times  B_{R'},
\end{equation}
for some nonempty balls $ B_{r'}, B_{R'} \subset \R^m$ .

	Up to extracting a subsequence, one can assume that  the sequence   $\{1_{ \widetilde \Omega_n}\} $ converges to $1_{\widetilde \Omega}$  in $L^1(\R^{N+m})$, where $\widetilde \Om$ is a nonempty open convex set in $\R ^{N+m}$. 
	Moreover, we have that  $\{ \widetilde \Om_n\}$ converges to $\widetilde \Om$ both in the Hausdorff and Hausdorff complementary topologies (see \cite{bubu} for the terminology), and it holds $B_r \times B_{r'}  \subset   \widetilde \Omega   \subset B_R  \times   B_{R'}$.       Notice also that $\widetilde \Om$ can be written as
	$$
	\widetilde{\Omega}  = \big \{ (x,y) \in \R^{N+m} \ :\  \, x \in  \Omega, \, \norma{y}_{\R^m} <\omega_m^{-\frac 1 m} \phi^{\frac{1}{m} }( x)  \big \}\,,
$$ 
where $\Om$ is the limit of $T _n (\Om_n)$ is Hausdorff distance and  $\phi$ is the pointwise limit of $\phi _n \circ T_n ^ {-1}$ in $\Om$.

	Let us set $\omega:= \pi_{\mathbb{R}^l \times \{0\}^{N+m-l}} (\widetilde{\Omega}) $, namely $\om$ is the open convex subset of $\R ^ l$ given by 
	$$
	\omega =  \big \{ (x_1,\dots, x_l)\, : \exists \;   (x_{l+1} , \dots x_{N+m} ) \in \R ^ {N- l+m}   \mbox{ s.t. } 	(x_1, \dots, x_l, x_{l+1} , \dots x_{N+m}) \in \widetilde{\Omega}  \big \}.
	$$
	Moreover, we define the function $  \Phi:    \om \to \RR_+$ by 
 	$$
	  \Phi  (x_1,\dots, x_l)= \mathcal{H}^{N-l+m} \big ( \big \{ y\in  \widetilde{\Omega}\, :\,  \; y_i=x_i \  \forall i=1, \dots, l\big \} \big ).
$$
By the convexity of $\widetilde \Om$  and Brunn-Minkowski Theorem,   the function $  \Phi $ is  $\big (\frac{1}{N-l+m}\big )$-concave.
	
	\smallskip
	In order to get the upper semicontinuity of eigenvalues, let $\{u_0,\dots,u_k\}$ be $L^2(\om,   \Phi  )$-normalized, pairwise orthogonal,  eigenfunctions associated with $\mu_0(\omega,   \Phi  ), \dots, \mu_k(\omega,   \Phi  )$.
	For each $j = 0,\dots, k$, let $\tilde u_j : \widetilde \Om \to \R$ be  defined by
	$$\tilde u_j(x_1, \dots, x_l, x_{l+1}, \dots, x_{N+m}):= u_j(x_1, \dots, x_l).$$
	Then $\tilde u_j \in H^1(\widetilde \Om)$ and, by separation of variables, $\int_{\tilde \Om}\tilde u_j ^2  =1$,   $\int_{\tilde \Om}|\nabla \tilde u_j |^2  = \mu_j(\omega,   \Phi  )$. 
	
	From the Hausdorff convergence   of $\widetilde \Om _n$ to $\widetilde \Om$,   there exists an infinitesimal sequence $\vps_n>0$ such that $(1+\vps_n) \widetilde \Om \supseteq \widetilde \Om_n$. Defining $\tilde  v_{n,j}(x)= \tilde  u_j(\frac{1}{1+\vps_n} x)|_{\widetilde \Om_n}$, we get that $\tilde v_{n,j} \in H^1(\widetilde \Om_n)$ and that $(1_{\widetilde \Om_n}\tilde v_{n,j}, 1_{\widetilde \Om_n}  \nabla \tilde v_{n,j} )$ converges strongly in $L^2(\R^{N+m})$ to $(1_{\widetilde \Om } \tilde u_j, 1_{\widetilde \Om } \nabla \tilde u_j)$.

	Then we can use $\widetilde S_{k}^n:=\text{span}\{ \tilde v_{n,0}(x_1, \dots x_N), \dots, \tilde v _{n,k}(x_1, \dots x_N)\}$ as test  for $\mu_k(\Omega_n,\phi_n)$, which yields		
	$$
	\begin{aligned}
		\limsup_\nif \mu_k(\Omega_n,\phi_n)& \leq 	\limsup_\nif \max_{\substack{v \in  \widetilde  S_k^n \\ \norma{v}_{L^2(\Omega_n,\phi_n)}=1}}\int_{\Omega_n} \abs{\nabla v}^2\phi_n	 \\&=     \limsup_\nif \max_{\substack{\tilde v \in \text{span}\{ \tilde v_{n,0}, \dots \tilde v _{n,k}\}\\ \norma{\tilde v}_{L^2(\widetilde \Omega_n)}=1}}\int_{\tilde \Omega_n} \abs{\nabla \tilde v}^2 \\ &
		=     \max_{\substack{\tilde u \in \text{span}\{ \tilde u_{0}, \dots \tilde u _{k}\}\\ \norma{\tilde u}_{L^2(\widetilde \Omega )}=1}}\int_{\tilde \Omega} \abs{\nabla \tilde u}^2 = \max_{\substack{u \in  \text{span} \{u_0,\dots,u_k\}\\ \norma{u}_{L^2(\omega,   \Phi  )}=1}}	 {	\int_\omega \abs{\nabla u}^2 } \phi= \mu_k(\omega,   \Phi  ),
	\end{aligned}
	$$
	hence
	\begin{equation}
		\label{limsupmu}
		\limsup_ \nif \mu_k(\Omega_n,\phi_n)  \leq \mu_k(\omega,   \Phi  ).
	\end{equation}

	To get  the lower semicontinuity of  eigenvalues, let $S_k^n:=\text{span}\{u_{n,0},\dots,u_{n,k}\}$ be $L^2$-normalized, pairwise orthogonal, eigenfunctions associated with $\mu_0(\Omega_n,\phi_n), \dots, \mu_k(\Omega_n,\phi_n)$. 
	For each $j$,  let us introduce the function $\tilde v_{n,j}$ defined on $\widetilde{\Omega}_n$ by  
	$$
	\tilde v_{n,j}(x_1,\dots,x_N,x_{N+1},\dots x_{N+m})= (a^n_{l+1}\dots a^n_N ) ^ {\frac{1}{2}} \,u_{n,j}(T_n^{  -1 }(x_1,\dots,x_N)).
	$$
	
	By direct computation, we see that  $ \ds \int_{\tilde \Om_n} \tilde v_{n,j_1} \tilde v_{n,j_2} dx =\delta_{j_1j_2}$, 
	and
	$$
	\begin{aligned}
		& \int_{\tilde \Om_n} \Big | \frac{\partial \tilde v_{n,j}}{\partial x_i}\Big |^2 = 
		\int_{  \Om_n } \Big | \frac{\partial u_{n,j}}{\partial x_i} \Big | ^2 \phi_n   
 		\qquad &\forall  i=1, \dots, l ,
		\\ \noalign{\smallskip} 
		&  \int_{\tilde \Om_n} \Big | \frac{\partial \tilde v_{n,j}}{\partial x_i}\Big |^2 =  (a^n_i)^2	\int_{  \Om_n } \Big | \frac{\partial u_{n,j}}{\partial x_i}\Big |^2  \phi_n  
		\qquad &\forall i=l+1, \dots, N\,,
		\\ \noalign{\smallskip} 
		&  \int_{\tilde \Om_n} \Big | \frac{\partial \tilde v_{n,j}}{\partial x_i}\Big |^2 =  0
		\qquad &\forall i=N+1, \dots, N+m\,. 
	\end{aligned} 
	$$

	By  \eqref{abf205},   we can extend  $\tilde v_{n,j}$ to functions in $H^1(B_R \times  B_{R'}  )$ which are uniformly bounded, and hence up to subsequences converge to some limit $\tilde  {v}_j$, weakly in 
	$H ^ 1 ( B _R\times   B_{R'} )$ and strongly in  $L ^ 2 ( B_R   \times   B_{R'})$. The limit functions $\tilde v _j$ do not depend on $(x_{l+1} , \dots x_{N+m}) $, since, for   
	$i= l+1, \dots, N$, we have
	$$
	\int_{\widetilde \Om} \Big | \frac{\partial \tilde{v}_j}{\partial x_i}\Big | ^2  \leq \liminf_{n \to + \infty} 	\int_{\widetilde \Om_n} \Big | \frac{\partial \tilde v_{n,j}}{\partial x_i}\Big | ^2= \liminf_{n \to + \infty}\  (a_{i}^n)^2	\int_{  \Om_n } \Big | \frac{\partial u_{n,j}}{\partial x_i}\Big | ^2 \phi_n   
 = 0\,,
	$$
	and for   
	$i= N+1, \dots, N+m$, we have
	$$
	\int_{\widetilde \Om} \Big | \frac{\partial \tilde{v}_j}{\partial x_i}\Big | ^2 \leq \liminf_{n \to + \infty} 	\int_{\widetilde \Om_n} \Big | \frac{\partial \tilde v_{n,j}}{\partial x_i}\Big | ^2= 0\,.
	$$
	Then, if we define, for $(x_1, \dots, x_l, x_{l+1} , \dots x_{N+m}) \in \widetilde{\Omega}$, 
	$${v}_j(x_1, \dots,x_l) := \tilde{v}_j(x_1, \dots, x_l, x_{l+1} , \dots x_{N+m}) \,,$$ 
	and we set $S_k:= \text{span}\{ {v}_0, \dots ,  {v}_k\}$, 
	we have
	\begin{equation}\label{ambu31}
		\begin{aligned}
			\mu_k(\omega,\phi) &\leq  \max_{\substack{  v \in S_k\setminus\{0\} \\ \norma{  v}_{L^2(\omega ,   \Phi  )}=1}}\int_\omega  \abs{\nabla   v}^2   \Phi   =\max_{\substack{\tilde v  \in \text{span}\{\tilde {v}_{0}, \dots , \tilde{v}_{k}\} \\ \norma{\tilde v}_{L^2(\widetilde \Om )}=1}}\int_{\widetilde \Om} \abs{\nabla \tilde v}^2
			\\
			& \leq	\liminf_\nif  \max_{\substack{\tilde v  \in \text{span}\{\tilde {v}_{n,0}, \dots , \tilde{v}_{n,k}\} \\ \norma{\tilde v}_{L^2(\widetilde \Om_n )}=1}} \int_{\tilde \Om_n} \sum_{i=1}^l \Big | \frac{\partial \tilde v_{n,j}}{\partial x_i}\Big | ^2 +  \sum_{i=l+1}^N \frac{1}{(a_i^n)^2} \Big | \frac{\partial \tilde v_{n,j}}{\partial x_i}\Big | ^2   \\
			&
			=    \liminf _\nif    \max _{\substack{  u  \in  S_k^n \\ \norma{u}_{L^2( \Om_n,\phi_n )}=1}}  \int_{\Om_n} |\nabla u|^2 \phi_n =
			  \liminf_\nif    \mu_k(\Omega_n,\phi_n).
		\end{aligned}
	\end{equation}

	Finally,    from \eqref{limsupmu} and  \eqref{ambu31}, we see that 
	   \eqref{abf2061}   holds true, so that all the inequalities in  \eqref{ambu31} hold with equality sign, which implies that, for every $j = 0, \dots, k$,   $v_j $ has  to be a $j$-th eigenfunction for  $\mu _j(\om,   \Phi  )$ (and in particular  \eqref{abf202} and \eqref{abf208} hold true).
\end{proof}

 \bigskip 
 
 \begin{remark}
  \label{abf210} 
  Replacing the Laplace operator by the $p$-Laplace operator for $p \in ( 1, + \infty)$, with $p \neq 2$, 
   the statement of 
Proposition \ref{abf209} above remains true 
only for $k=1$, and with power $\frac 1p$ in place of $\frac 12$ in \eqref{abf202}-\eqref{abf208}.     The proofs are exactly the same, and 
the limitation to the case $k=1$ is of course required in order to have the definition of the first nontrivial  Neumann eigenvalue in terms of the Raleigh quotient.  
\end{remark}

\section{$L ^ \infty$ estimate for Neumann eigenfunctions}\label{sec:infty}

  In this section we prove the following result, which to the best of our knowledge is not covered by the literature about a priori
estimates for so lutions to $p$-Laplace type equations (see e.g. \cite{CM14}).  

 Below and in the remaining of the paper,  when a constant 
is defined as a function depending on  some of the parameters $p,N, m$, such dependence will be omitted, for notational simplicity,  during the subsequent computations. In case any ambiguity may arise, we shall specify the value of $p, N, m$ at which the constants are computed.

\begin{proposition}\label{ambu25}
There exists a  positive constant  $ \caone=  \caone (N)$ such that, 
for every open bounded convex domain $\Omega\subset \R ^N$ with diameter $D_\Om$,  
a first eigenfunction 
$ \overline u$  associated with the Neumann eigenvalue $\mu_p (\Omega)$ satisfies
	\begin{equation}\label{ambu40}
		\norma{\overline u}_{L^\infty(\Om)} \leq  \caone  \mu _p (\Om)^{\frac{N}{p}}  \frac{D _\Om ^N}{\abs{\Omega}^{\frac{1}{p}}}
		\norma{\overline u}_{L^p(\Om)}  \,. 		
	\end{equation}    
	The value of $\caone$ can be explicitly given as 
 	\begin{equation}\label{f:caone} \caone =  \caone (N):=    2 ^  {  2N }  \om _N ^ {-1 }     \big (  1+ 4 N ^ { N +1}  \big )   ^ { N  }  \Big( \frac{N}{N-1}\Big)^{N(N-1)} 
				\,.\end{equation}    
\end{proposition}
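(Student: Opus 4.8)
The plan is to prove the $L^\infty$ bound via a Moser iteration scheme adapted to the weighted Neumann setting, exploiting the fact that on a convex domain one has a Sobolev-Poincar\'e inequality with an explicitly controllable constant. Let me write $\overline u$ for the eigenfunction, normalized so that $\int_\Om \overline u\,|\overline u|^{p-2} = 0$; it solves $-\dive(|\nabla\overline u|^{p-2}\nabla\overline u) = \mu_p(\Om)\,|\overline u|^{p-2}\overline u$ with Neumann boundary conditions. The two ingredients are: (a) a Sobolev inequality $\|v\|_{L^{p^*}(\Om)} \le C_S \|\nabla v\|_{L^p(\Om)} + (\text{lower order})$, valid for $v\in W^{1,p}(\Om)$ with mean zero, where for convex $\Om$ the constant $C_S$ can be bounded in terms of $N$, $D_\Om$ and $|\Om|$ (the factor $D_\Om^N/|\Om|$ and the dimensional constants in \eqref{f:caone}, in particular the $\big(\tfrac{N}{N-1}\big)^{N(N-1)}$ which is the classical Gagliardo–Nirenberg–Sobolev constant, point to exactly this route); and (b) the differential equation, to convert $\int|\nabla(\text{power of }\overline u)|^p$ into an integral of a higher power of $\overline u$ times $\mu_p$.

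First I would record, for $\beta\ge 1$, the Caccioppoli-type identity obtained by testing the equation against $\overline u\,|\overline u|^{p\beta - p}$ (suitably truncated to make it admissible): this yields $\int_\Om |\nabla(|\overline u|^{\beta})|^p \le c(p,\beta)\,\mu_p(\Om)\int_\Om |\overline u|^{p\beta + p - p} = c(p,\beta)\,\mu_p(\Om)\int_\Om |\overline u|^{p\beta}$, where $c(p,\beta) \sim \beta^{p-1}$ up to constants — here one must be a little careful because the test function must lie in $W^{1,p}$ and $|\overline u|^{\beta}$ need not have zero mean, so I would either truncate at height $k$ and let $k\to\infty$ using that $\overline u\in L^\infty$ is what we are proving (hence argue first on $\min\{|\overline u|,k\}$ and pass to the limit), or subtract the mean before applying Sobolev. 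Next, applying the Sobolev inequality \emph{with the convex-domain constant} to $v = |\overline u|^\beta - (|\overline u|^\beta)_\Om$ converts this into $\||\overline u|^\beta\|_{L^{p^*}} \le C_S\big(c\,\mu_p^{1/p}\|\,|\overline u|^\beta\|_{L^p}^{\phantom{1}} \big) + |\Om|^{-1/p}\||\overline u|^\beta\|_{L^1}$, i.e. a bound on $\|\overline u\|_{L^{\beta p^*}}$ in terms of $\|\overline u\|_{L^{\beta p}}$ and $\|\overline u\|_{L^\beta}$ with an explicit multiplicative constant of the form $A\cdot\beta^{(p-1)/p}$, where $A$ carries the factors $\mu_p^{1/p}$, $D_\Om$, $|\Om|^{-1/p}$ and a dimensional constant. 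Then I would iterate with $\beta_j = (p^*/p)^j$, so that the exponents march off to infinity, and multiply the resulting inequalities: the product $\prod_j (A\beta_j^{(p-1)/p})^{1/\beta_j}$ converges because $\sum_j j/\beta_j < \infty$ and $\sum_j 1/\beta_j < \infty$, producing $\|\overline u\|_{L^\infty} \le (\text{explicit const})\cdot \mu_p(\Om)^{N/p}\,D_\Om^{?}\,|\Om|^{-1/p}\,\|\overline u\|_{L^p}$; tracking the exponent of $D_\Om$ through the iteration (each step contributes, and $\sum 1/\beta_j$ together with the $\mu_p$-exponent bookkeeping must add up) gives $D_\Om^N$ and $\mu_p^{N/p}$ as in \eqref{ambu40}.

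**The main obstacle** I expect is twofold. First, getting the \emph{explicit} constant \eqref{f:caone} rather than just "some constant": this requires a clean, quantitative Sobolev–Poincar\'e inequality on convex sets with the constant written in terms of $N$, $D_\Om$, $|\Om|$ only — one natural source is the representation $v(x) - v_\Om = \frac{1}{|\Om|}\int_\Om \frac{\langle x-y,\nabla v(y)\rangle}{|x-y|^N}\,\mathrm{something}$ valid on convex (or star-shaped) domains, combined with a weak-$L^{N/(N-1)}$ / Young-inequality estimate for the Riesz-type kernel; the factor $\big(\tfrac{N}{N-1}\big)^{N(N-1)}$ and the $(1+4N^{N+1})^N$ strongly suggest this is the intended path, but carrying the constants honestly through the truncation step, the mean-subtraction, and especially the infinite product (where one needs $\prod \beta_j^{1/\beta_j}$ and $\prod 2^{j/\beta_j}$ computed, not just bounded abstractly) is where the bookkeeping is delicate. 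Second, the admissibility issue: $\overline u$ a priori only lies in $W^{1,p}\cap L^\infty$ — actually a priori boundedness of $\overline u$ is itself part of what needs justification — so I would run the first iteration step with the truncated test function $\overline u_k := \mathrm{sign}(\overline u)\min\{|\overline u|, k\}$, obtain a bound uniform in $k$ on $\|\overline u_k\|_{L^{p^*}}$ depending on $\|\overline u\|_{L^p}$ and then on $\|\overline u\|_{L^{p^*}}$ by monotone convergence, thereby bootstrapping integrability before the exponents are pushed to $\infty$; this standard but somewhat tedious regularization must be done carefully to keep the constants explicit.
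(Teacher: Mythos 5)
Your overall plan (Moser iteration on a domain normalized via the John ellipsoid, explicit Sobolev constant, infinite product) is the right shape, but the Sobolev ingredient you choose is wrong in a way that breaks the exponent bookkeeping. You propose the $W^{1,p}\hookrightarrow L^{p^*}$ embedding: testing the equation against $\overline u|\overline u|^{p\beta-p}$ gives
\[
\int_\Om\big|\nabla|\overline u|^\beta\big|^p\;\le\;\frac{\beta^p}{p\beta-p+1}\,\mu_p(\Om)\int_\Om|\overline u|^{p\beta},
\]
and after taking $p$-th roots and applying your Sobolev inequality, the per-step constant carries $\mu_p^{1/p}$. Iterating with $\beta_j=(p^*/p)^j$, the total exponent of $\mu_p$ is $\tfrac{1}{p}\sum_j\beta_j^{-1}=\tfrac1p\cdot\tfrac{N}{p}=\tfrac{N}{p^2}$, \emph{not} $\tfrac{N}{p}$ as in \eqref{ambu40}. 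Your claim that the bookkeeping "must add up" is not backed by a computation, and in fact it does not. Moreover the $W^{1,p}\to L^{p^*}$ route is only available for $p<N$, whereas the statement and its use in the paper are uniform in $p\in(1,\infty)$.

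The paper's proof avoids this by \emph{downgrading to an $L^1$ gradient bound} before invoking Sobolev. From the identity above one writes $\int|\nabla|v|^q| = p\int|v|^{q(p-1)/p}\,|\nabla|v|^{q/p}|$, applies Young's inequality with a free parameter $\alpha$, and then uses the Payne--Weinberger lower bound for $\mu_p$ to choose $\alpha$ so that the terms balance; this turns the $L^p$-gradient estimate of $|v|^{q/p}$ into
\[
\int_A|\nabla|v|^q|+\int_A|v|^q\;\le\;C\,\mu_p\,\frac{(q/p)^p}{q-p+1}\int_A|v|^q,
\]
with $\mu_p$ appearing to the first power. One then applies the $BV\hookrightarrow L^{N/(N-1)}$ embedding on the John-normalized set $A$ (with $B_1\subset A\subset B_N$), for which the constant is controlled by combining the sharp $BV$ inequality on $\R^N$ with the explicit trace bound on $W^{1,1}(A)$ proved in Lemma~\ref{l:trace} (this is the source of the $(1+4N^{N+1})^N$ factor). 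Iterating with $q_k=p(N/(N-1))^k$ gives $\sum 1/q_k = N/p$ and hence exactly $\mu_p^{N/p}$; the factor $\big(\tfrac{N}{N-1}\big)^{N(N-1)}$ is not a Gagliardo--Nirenberg constant but arises from bounding $\prod_k (q_k/p)^{p/q_k}=\delta^{\sum k\delta^{-k}}$ with $\delta=\tfrac{N}{N-1}$. So both the Young-inequality reduction step (which needs the Payne--Weinberger lower bound to fix $\alpha$) and the $BV$/trace route are essential ideas missing from your plan, and without them the exponent of $\mu_p$ comes out wrong.

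A secondary, smaller issue: you worry about the test function's admissibility and propose truncation. In the paper's setting the test function $|v|^{q-p}v$ is admissible because it lies in $W^{1,p}(A)$ for any $q\ge p$ once $v\in L^q$; the bootstrap through the $q_k$ chain replaces the truncation argument, and the initial case $q_0=p$ is available by hypothesis ($\overline u\in W^{1,p}$). Your truncation route is not wrong, but it is unnecessary and would make the constants harder to track.
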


 \begin{remark} \label{abf.1.02}By inspection of the proof,  a refined form of the inequality \eqref{ambu40} holds if one allows  the constant to depend also on $p$,
i.e., $\caone$ could be be replaced by the smaller constant  
		\begin{equation}\label{f:k1tilde}  {{\widetilde \caone } (N,p):= }   
		2 ^  {  (\frac{p+1}{p} ) N } N ^N     \big ( N ^ {-1} \om _N ^ {-\frac{1}{N} }  (1+ 4 N ^ { N +1})  \big ) ^ { \frac{N}{p}  }  \beta (N, p)	
		\,,\end{equation} 
		with 
		$$
		 \beta (N, p)  := \prod_{k=0}^\infty\left(\frac{ (\frac{N}{N-1} ) ^{kp}}{p \big ( \frac{N}{N-1}  \big ) ^ k-p+1}  \right)^{\frac{1}{p }\big ( \frac{N-1}{N}  \big ) ^ k} \
		 		 $$ 	
		   However, since we do not seek for optimal constants in favour of simplicity, 	 		 in the sequel we shall keep using the inequality \eqref{ambu40} with $\caone$ given by \eqref{f:caone}.  
		  \end{remark}

 \begin{remark} By inspection of the proof,  Proposition \ref{ambu25} continues to hold  for higher order eigenfunctions  of the Neumann $p$-Laplacian,   meant as functions $u \in W^{1,p}(\Om)$ which satisfy 
the weak form of the equation $-\Delta_p u = \mu |u|^{p-2} u$ in $[W^{1,p}(\Om) ]'$.  

	\end{remark} 
 
As a consequence of Proposition \ref{ambu25}, we are going to prove  that  a $L ^ \infty$-estimate holds as well for {\it weighted} Neumann 
$p$-Laplacian eigenfunctions:

\begin{corollary}\label{c:collapse} 
For   any open bounded convex set $\Om \subset \R ^N$ and    
any positive  $\frac1m$-concave weight $\phi$    on $\Om$,      
a first eigenfunction 
$ \overline u$  associated with the Neumann eigenvalue $\mu_p ( \Om, \phi)$ satisfies 
 \begin{equation}\label{f:inftybound}
 	\norma{ \overline u}_{L ^ \infty( \Om)} \leq   \caone     \mu _p ( \Om , \phi)^{\frac{ N+m  }{p}} \frac{D_\Om ^ { N+m}}{{\left (\int_{\Om} 
\phi \right )}^{\frac 1 p}} \norma{\overline u}_{L^p(\Om ,\phi )}\,,
\end{equation}
 where $\caone$ denotes the constant defined by \eqref{f:caone}, computed at $N+m$. 
	\end{corollary}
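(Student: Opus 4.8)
The plan is to deduce Corollary \ref{c:collapse} from Proposition \ref{ambu25} via the standard lifting trick already used in the compactness lemma above: given the convex set $\Om \subset \R^N$ and the $\frac1m$-concave weight $\phi$, choose $m \in \N \setminus \{0\}$ so that $\phi^{1/m}$ is concave, and form the convex body
$$
\widetilde\Om := \left\{ (x,y)\in \R^N \times \R^m \ :\ x \in \Om,\ \norma{y}_{\R^m} < \om_m^{-1/m}\phi^{1/m}(x) \right\} \subset \R^{N+m}.
$$
Then $\widetilde\Om$ is open, bounded and convex, of dimension $N+m$. The key observation is that if $\overline u$ is a first eigenfunction for $\mu_p(\Om,\phi)$, the lifted function $\tilde u(x,y):=\overline u(x)$, which is constant in $y$, is a valid test function for $\mu_p(\widetilde\Om)$, and more importantly it actually \emph{is} a Neumann $p$-eigenfunction on $\widetilde\Om$ with the same eigenvalue: the weak Euler--Lagrange equation $-\Delta_p \overline u = \mu_p(\Om,\phi)\,\phi\,|\overline u|^{p-2}\overline u$ in $[W^{1,p}(\Om,\phi)]'$ transforms, upon integrating the weight against the $y$-slices and using $\int_{\{\|y\|<\om_m^{-1/m}\phi^{1/m}(x)\}} dy = \phi(x)$, into $-\Delta_p \tilde u = \mu_p(\Om,\phi)\,|\tilde u|^{p-2}\tilde u$ in $[W^{1,p}(\widetilde\Om)]'$, tested against functions independent of $y$; and since the full equation on $\widetilde\Om$ decouples (no $y$-derivatives of $\tilde u$), this extends to all test functions in $W^{1,p}(\widetilde\Om)$. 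So $\tilde u$ is an (a priori higher-order) Neumann $p$-eigenfunction on $\widetilde\Om$ with eigenvalue $\mu_p(\Om,\phi)$, and by the remark following Proposition \ref{ambu25} the $L^\infty$ estimate \eqref{ambu40} applies to it.

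Next I would plug $\tilde u$ into \eqref{ambu40} in ambient dimension $N+m$, so that $\caone$ there is $\caone(N+m)$. This yields
$$
\norma{\tilde u}_{L^\infty(\widetilde\Om)} \le \caone\, \mu_p(\Om,\phi)^{\frac{N+m}{p}}\, \frac{D_{\widetilde\Om}^{\,N+m}}{|\widetilde\Om|^{1/p}}\, \norma{\tilde u}_{L^p(\widetilde\Om)}.
$$
Now I translate each quantity back to the data on $\Om$. Since $\tilde u$ is constant in $y$ and $\overline u$ ranges over the same values, $\norma{\tilde u}_{L^\infty(\widetilde\Om)} = \norma{\overline u}_{L^\infty(\Om)}$. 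By Fubini and the defining inequality of $\widetilde\Om$, $\int_{\widetilde\Om} |\tilde u|^p = \int_\Om |\overline u|^p \phi = \norma{\overline u}_{L^p(\Om,\phi)}^p$, and likewise $|\widetilde\Om| = \int_\Om \phi$. For the diameter, I would use the monotonicity of diameter with respect to the projection $\R^{N+m}\to\R^N$ is \emph{false} in general, so instead I need $D_{\widetilde\Om}$ controlled by $D_\Om$; this requires a genuine estimate rather than an identity. Here one should note $\norma{y}_{\R^m} < \om_m^{-1/m}\phi^{1/m}(x) \le \om_m^{-1/m}\norma{\phi}_\infty^{1/m}$, so $\widetilde\Om$ is contained in a slab of $y$-width controlled by $\norma{\phi}_\infty^{1/m}$, giving $D_{\widetilde\Om}^2 \le D_\Om^2 + C\,\norma{\phi}_\infty^{2/m}$ — which is \emph{not} what \eqref{f:inftybound} claims (the stated bound has $D_\Om^{N+m}$ with no $\norma{\phi}_\infty$). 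The resolution is a normalization: since $\mu_p$, the eigenfunction ratio, and both sides of \eqref{f:inftybound} are homogeneous of degree $0$ in $\phi$, one may assume $\norma{\phi}_\infty = 1$; then the $y$-fibres have length $\le \om_m^{-1/m}$, which is a dimensional constant, and one can further rescale or simply absorb it, so that $D_{\widetilde\Om} \le c(N,m)\,D_\Om$ after possibly enlarging $\Om$ to contain a fixed ball — or more cleanly, one checks $D_{\widetilde\Om}^{N+m} \le C(N,m)\,(D_\Om + 1)^{N+m}$ and observes the extra additive constant is harmless after the same degree-zero normalization reduces to $D_\Om \gtrsim$ const. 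I would present this carefully so the final constant is exactly $\caone(N+m)$ as claimed, choosing $m$ minimal with $\frac1m \le \alpha$.

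The main obstacle, then, is not the lifting itself — which is by now routine — but two bookkeeping points that must be handled with care to land precisely on the stated inequality: first, verifying that the lifted function genuinely satisfies the unweighted Neumann $p$-Laplacian equation on $\widetilde\Om$ (so that the ``higher-order eigenfunction'' version of Proposition \ref{ambu25} applies, since $\tilde u$ need not be the \emph{first} nonzero eigenfunction on $\widetilde\Om$, only \emph{an} eigenfunction); and second, controlling $D_{\widetilde\Om}$ by $D_\Om$ with a purely dimensional constant, which forces the homogeneity normalization $\norma{\phi}_\infty = 1$ to be invoked explicitly — consistent with the hypothesis $\norma{\phi_n}_\infty = 1$ appearing elsewhere in the paper. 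Everything else (Fubini for the $L^p$ norms and the volume, identity of the $L^\infty$ norms) is immediate from the product-like structure of $\widetilde\Om$.
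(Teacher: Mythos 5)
Your proposal contains two genuine gaps that the paper's $\e$-limit construction is designed precisely to avoid.

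First, the lifted function $\tilde u(x,y):=\overline u(x)$ is \emph{not} a Neumann $p$-eigenfunction on $\widetilde\Om$. You write the weighted Euler--Lagrange equation as $-\Delta_p\overline u=\mu\,\phi\,|\overline u|^{p-2}\overline u$, but the correct equation is $-\dive\bigl(\phi\,|\nabla\overline u|^{p-2}\nabla\overline u\bigr)=\mu\,\phi\,|\overline u|^{p-2}\overline u$, which after expanding reads $-\Delta_p\overline u=\mu|\overline u|^{p-2}\overline u+(\nabla\log\phi)\cdot|\nabla\overline u|^{p-2}\nabla\overline u$. The drift term survives the lifting, so $\tilde u$ does not satisfy $-\Delta_p\tilde u=\mu|\tilde u|^{p-2}\tilde u$ in the interior of $\widetilde\Om$. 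Equivalently, $\tilde u$ fails the Neumann boundary condition on the lateral part of $\partial\widetilde\Om$: there the outer normal has a nonzero $x$-component proportional to $\nabla_x\phi$, so $|\nabla\tilde u|^{p-2}\nabla\tilde u\cdot\nu\ne 0$ whenever $\phi$ is nonconstant. (One can check directly that testing against $\psi$ with zero fiber average produces a nonvanishing boundary contribution.) What \emph{is} true is only the Rayleigh-quotient identity $\int_{\widetilde\Om}|\nabla\tilde u|^p/\int_{\widetilde\Om}|\tilde u|^p=\int_\Om\phi|\nabla\overline u|^p/\int_\Om\phi|\overline u|^p$; that makes $\tilde u$ a competitor but not a critical point, so the Moser iteration of Proposition \ref{ambu25} — which uses the weak equation, not merely the Rayleigh quotient value — does not apply to $\tilde u$.

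Second, the diameter control is inadequate. After the two available degree-zero normalizations ($D_\Om=1$ and $\|\phi\|_\infty=1$), the fibres have width of order $\om_m^{-1/m}$, a genuine dimensional amount, so $D_{\widetilde\Om}$ strictly exceeds $D_\Om$ and your argument yields the constant $\caone(N+m)\,D_{\widetilde\Om}^{\,N+m}$, which is strictly larger than the claimed $\caone(N+m)\,D_\Om^{N+m}$. There is no further freedom to absorb that factor. The paper handles both issues at once by working with the family $\Om_\e:=\{(x,y):x\in\Om,\ \|y\|_{\R^m}<\e\,\om_m^{-1/m}\phi^{1/m}(x)\}$: for each $\e>0$ the estimate of Proposition \ref{ambu25} is applied to the \emph{actual} first Neumann eigenfunction $u_\e$ of the unweighted problem on $\Om_\e$ (so no eigenfunction issue arises), and then $\e\to 0$ is taken. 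Since $D_{\Om_\e}\to D_\Om$, $|\Om_\e|=\e^m\int_\Om\phi$, $\mu_p(\Om_\e)\to\mu_p(\Om,\phi)$, and by Proposition \ref{abf209}(ii) (adapted to general $p$ via Remark \ref{abf210}) the scaled eigenfunctions $\e^{m/p}u_\e$ converge to an $L^p(\Om,\phi)$-normalized first eigenfunction with $\|\cdot\|_\infty$ lower semicontinuous along the sequence, the limit yields exactly \eqref{f:inftybound}. Note that the degree-zero homogeneity $\phi\mapsto\e^m\phi$ you mention \emph{is} the paper's $\e$-scaling; but exploiting it correctly requires the convergence theory for collapsing domains, not a fixed-$\e$ diameter bound.
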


 \smallskip 
For the proof of Proposition \ref{ambu25}, we need the following preliminary lemma.

\begin{lemma}[Estimate of the trace constant in $W ^ {1,1}(\Om)$] \label{l:trace} 
Let $B _1$ and $B _N$ are balls of radii $1$ and $N$ in $\R^N$, and let 
 $\Om$ be a convex set such that $B _ 1\subset \Om\subset B _N$. For every function $u \in W ^ { 1,1} (\Om)$, it holds
\begin{equation}\label{f:tconstant} 
\inf \Big \{   \frac{  \int _\Om |u| + \int _ \Om |\nabla  u |  }{\int _{\partial \Om}  |u| }  \ :\ u \in W ^ { 1, 1} (\Om) \setminus W ^ { 1, 1} _ 0 (\Om) \Big \}  \geq \frac{1}{4 N ^ { N+1} }\,.   \end{equation}
\end{lemma}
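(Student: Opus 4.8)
The plan is to prove the trace inequality by a direct geometric argument, exploiting the fact that $\Om$ is squeezed between concentric balls $B_1 \subset \Om \subset B_N$. First I would observe that it suffices to prove the inequality for smooth (say $C^1$) functions $u$, since the general case follows by density of $C^1(\overline\Om) \cap W^{1,1}(\Om)$ in $W^{1,1}(\Om)$ and continuity of the trace operator; and moreover, by replacing $u$ with $|u|$ (which does not increase $\int_\Om|\nabla u|$ and does not change the other two integrals), one may assume $u \geq 0$.

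The core idea is a radial-projection / cone argument centered at the origin. Since $B_1 \subset \Om$, every ray from the origin meets $\partial\Om$ exactly once, at a point at distance $r(\theta) \in [1, N]$ from the origin, where $\theta \in S^{N-1}$. For a fixed direction $\theta$, integrate along the ray: writing $g(t) = u(t\theta)$ for $t \in (0, r(\theta))$, we have the one-dimensional identity $u(r(\theta)\theta) = g(r(\theta)) = g(s) + \int_s^{r(\theta)} g'(t)\,dt$ for any $s \in (0, r(\theta))$, hence
\begin{equation*}
u(r(\theta)\theta) \leq |g(s)| + \int_0^{r(\theta)} |g'(t)|\,dt \leq |u(s\theta)| + \int_0^{r(\theta)} |\nabla u(t\theta)|\,dt.
\end{equation*}
Now average the inequality over $s \in (0,1)$ (which is legitimate since $B_1 \subset \Om$) to get $u(r(\theta)\theta) \leq \int_0^1 |u(s\theta)|\,ds + \int_0^{r(\theta)} |\nabla u(t\theta)|\,dt$. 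The next step is to multiply by an appropriate weight and integrate over $\theta \in S^{N-1}$, converting the left side into a boundary integral $\int_{\partial\Om}|u|$ and the right side into the volume integrals $\int_\Om |u|$ and $\int_\Om|\nabla u|$. The key bookkeeping is the change of variables in polar coordinates: $dx = t^{N-1}\,dt\,d\theta$ on $\Om$, and $d\mathcal H^{N-1}(\partial\Om) = r(\theta)^{N-1}(\cos\alpha(\theta))^{-1}\,d\theta$ where $\alpha$ is the angle between the outer normal and the radial direction. Since $1 \leq r(\theta) \leq N$ and $t^{N-1} \leq N^{N-1}$ on $\Om$, while $t^{N-1} \geq$ (something bounded below) is false near the origin — so one must be careful: $t^{N-1}$ can be small, so when I write $\int_0^{r(\theta)}|\nabla u(t\theta)|\,dt$ I should bound $1 \leq t^{N-1}/t^{N-1}$ and relate it to $\int_0^{r(\theta)} |\nabla u(t\theta)| t^{N-1}\,dt$ — this fails for small $t$. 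The fix is to instead only integrate the gradient over $t \in (1, r(\theta))$ after first getting to the sphere of radius $1$, or more cleanly: from $u(r\theta) \leq |u(t\theta)| + \int_t^{r}|\nabla u(\rho\theta)|\,d\rho$ for $t \in (0,1)$, multiply by $t^{N-1}$ and integrate $t$ over $(0,1)$, using $\int_0^1 t^{N-1}\,dt = 1/N$ and $\int_0^1 t^{N-1}\int_t^r|\nabla u(\rho\theta)|\,d\rho\,dt \leq \frac1N \int_0^r |\nabla u(\rho\theta)|\,d\rho \leq \frac1N\int_0^r |\nabla u(\rho\theta)|\rho^{N-1}\,d\rho$ only if $\rho^{N-1} \geq 1$, which holds for $\rho \geq 1$; for $\rho < 1$ use $\rho^{N-1} \leq 1$, hmm, wrong direction again.

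So the main obstacle I anticipate is exactly this homogeneity mismatch near the origin, and the honest route is: bound $\int_0^r |\nabla u(\rho\theta)|\,d\rho = \int_0^1 + \int_1^r$; on $(1,r)$ we have $\rho^{N-1} \geq 1$ so $\int_1^r |\nabla u(\rho\theta)|\,d\rho \leq \int_1^r|\nabla u(\rho\theta)|\rho^{N-1}\,d\rho$; on $(0,1)$, bound $\int_0^1|\nabla u(\rho\theta)|\,d\rho$ crudely and then average this too, or incorporate it via a second application of the fundamental theorem of calculus. After assembling, integrating over $\theta$, and using the polar-coordinate Jacobians together with $\cos\alpha(\theta) \geq 1/N$ (which holds because $\Om \supset B_1$ forces the support hyperplane at any boundary point to be at distance $\geq 1$ from the origin while the point itself is at distance $\leq N$, so the normal makes angle $\leq \arccos(1/N)$ with the radial direction — this is the one genuinely geometric lemma I need), one obtains
\begin{equation*}
\int_{\partial\Om}|u|\,d\mathcal H^{N-1} \leq C(N)\Big(\int_\Om |u| + \int_\Om|\nabla u|\Big),
\end{equation*}
and tracking the powers of $N$ through $r \leq N$, $t^{N-1} \leq N^{N-1}$, $1/N$ from $\int_0^1 t^{N-1}$, and $\cos\alpha \geq 1/N$ gives a constant that I would then check is at most $4N^{N+1}$, yielding the stated lower bound $\frac{1}{4N^{N+1}}$ for the infimum. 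I would keep all estimates deliberately lossy, since the paper explicitly disclaims optimality of constants.
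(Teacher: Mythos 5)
Your overall strategy --- run the fundamental theorem of calculus along rays from the origin, average over the starting point, then integrate over directions $\theta\in S^{N-1}$ with the polar Jacobians --- is genuinely different from the paper's argument, which instead invokes the known value $C_{B_1}=1/N$ of the trace constant on the unit ball as a black box to handle the interior part, and then compares the surface measures of $\partial\Om$ and $\partial B_1$ through a rather delicate geometric construction with cones and tangent lines. Your key geometric lemma is correctly identified and correctly argued: if $\nu$ is the outer normal at $x\in\partial\Om$ and $\alpha(\theta)$ the angle between $\nu$ and the radial direction, then $\langle x,\nu\rangle=\|x\|\cos\alpha$ is the distance from the origin to the supporting hyperplane, which is $\geq 1$ because $B_1\subset\Om$, while $\|x\|\leq N$, so $\cos\alpha\geq 1/N$. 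Combined with the (unproved but standard) parametrization $d\mathcal H^{N-1}\big|_{\partial\Om}=\frac{r(\theta)^{N-1}}{\cos\alpha(\theta)}\,d\theta\leq N^N\,d\theta$, this is the right skeleton.

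The genuine gap is exactly where you flag it: the homogeneity mismatch near the origin. Averaging the starting point $s$ over all of $(0,1)$ cannot work, because $\int_{S^{N-1}}\int_0^1|\nabla u(t\theta)|\,dt\,d\theta$ is the integral of $|\nabla u|$ against the measure $t^{1-N}\,dx$, which is not absolutely continuous with bounded density with respect to Lebesgue measure near $0$; e.g.\ for $u(x)=|x|^{\alpha}$ with $1-N<\alpha<0$ the left side is infinite while $\int_\Om|\nabla u|$ is finite. None of the patches you sketch (``bound $\int_0^1$ crudely,'' ``a second application of FTC'') is carried out, and as stated they run into the same wrong-direction inequality you notice. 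The clean fix, which you do not reach, is to average $s$ only over the thin shell $s\in(1-1/N,1)$: there $s^{N-1}\geq(1-1/N)^{N-1}>e^{-1}$ is bounded below by a universal constant, and $\int_{1-1/N}^1 s^{N-1}\,ds=\frac{1-(1-1/N)^N}{N}>\frac{1-e^{-1}}{N}$, so the Jacobian factor $t^{N-1}$ can be reinserted at the cost of a multiplicative constant $e$ rather than $2^{N-1}$ (which is what averaging over $(1/2,1)$ would cost, and which overshoots $4N^{N+1}$ once $N\geq 5$). Carrying this through gives $\int_{\partial\Om}|u|\leq 2N^{N+1}\big(\int_\Om|u|+\int_\Om|\nabla u|\big)$, which is within the stated $4N^{N+1}$. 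So the plan is salvageable, and in fact would give a marginally better constant than the paper's cone argument, but as submitted it stalls at the very step you identify as the main obstacle, and the proposed workarounds do not close the gap.
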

\proof Let $C_\Om$ denote the infimum at the left hand side in \eqref{f:tconstant}.   For $\Om = B _ 1$, it holds  
\begin{equation}\label{f:cb1} C _{B _ 1} = \frac{|B _ 1|}{|\partial B _ 1|}  = \frac{1}{N}\,,
\end{equation} see
 \cite[eq.\ (4)]{saintier}.  Let now  $\Om$ be a convex set as in the statement, let us assume without loss of generality 
  that the balls $B _1$ and $B _N$ are centred at the origin, and let 
 $u$ be a nonnegative function  in $ C ^ \infty (\R ^N )$. For every fixed $x \in \partial B _ 1$, denoting by $x_\Om$  the intersection point between $\partial \Om$ and the  halfline with origin at $0$ containing $x$, and by  $u'$ the derivative of the restriction of $u$ to the segment $[x ,  x_\Om]$, we have 
 $$u (x_\Om) = u ( x) + \int _{ [x ,  x_\Om]} u'\,.$$
Integration  over $\partial B _1$ yields
$$ \begin{aligned} \int _{\partial B _1} u (x_\Om)  & = \int _{\partial B _1}  u ( x) + \int _{\partial B _1}  \int 
_{[x ,  x_\Om] } u'
\\ \noalign{\medskip} 
& \leq \int _{\partial B _1}  u ( x) + \int _{\partial B _1}  \int _{ [x ,  x_\Om]} |\nabla u|\,.
\end{aligned}  
$$ 
 We now exploit 
\eqref{f:cb1} to estimate the first addendum in the last line, and we use polar coordinates to estimate the second one.  We obtain 
$$\begin{aligned} \int _{\partial B _1} u (x_\Om)  & \leq \frac{1}{N}     \Big (  \int _{B _1}  u ( x) + \int _{B _1}  |\nabla u| \Big )  +  \int _{\partial B _1}   \int _{ [x ,  x_\Om] }  \rho ^{N-1}  |\nabla u|   \\ 
\noalign{\medskip}
& = \frac{1}{N}     \Big (  \int _{B _1}  u ( x) + \int _{B _1}  |\nabla u| \Big )  +  \int _{\Om\setminus B _1}   |\nabla u| \\ 
\noalign{\medskip}
& \leq        \int _{\Om}  u ( x) + \int _{\Om}  |\nabla u| 
    \,. 
\end{aligned}
$$ 
To conclude the proof, we are going to show that
$$  \int _{\partial \Om}  u \leq 4 N ^ { N+1}  \int _{\partial B _1} u (x_\Om)\,. $$ 
By using Riemann approximations of the two involved surface integrals, this amounts to show that
\begin{equation}\label{f:element}
\limsup _{ \e \to 0} \frac{\mathcal H ^ { N-1} (\om _\e)}{ \mathcal H ^ {N-1} (\sigma _\e)} \leq 4 N ^ { N+1} \,,
\end{equation}
where  $\sigma _\e\subset \partial B _ 1$ is the intersection of $\partial B _1$ with a cone $\Gamma(0)$ with vertex at $0$ of opening angle infinitesimal as $\e \to 0$, and  
$\om _\e \subset \partial \Om$ is the image of $\sigma _\e$   through the map 
which sends any  $x  \in \partial B_1$ into the point $x_\Om$ defined as above. 

In order to prove \eqref{f:element},  
let us fix a system of coordinates with  $e_N$ along the axis of the cone $\Gamma (0)$. 
By this way, 
$\om _\e$ is locally described as the graph of a map $x_N = \varphi ( x')$.  

We consider a point $x^*  \in \om _\e$   
such that 
$$\| x _\Om \| \leq \| x ^* \| \qquad \forall x \in \sigma _\e\,,$$ 
and we denote by $\Gamma (x ^*)$ the cone obtained as the convex envelope of  $B _ 1$  and $x ^*$.
Let  
$$ \overline{x } _N := \inf \{ x_N \ :\ (x', x_N) \in \Gamma (x ^*)\cap \Gamma(0)   \}\,.$$ 

(The dependence on $\e$ of   $x^*$ and $\overline x_N$ is omitted for simplicity of notation, as well as for the cones
$\Gamma(0)$ and $\Gamma (x ^*)$).

By monotonicity under inclusion of the surface area measure of convex bodies, we have
\begin{equation}\label{f:mon} 
\mathcal H ^ { N-1} (\om _\e) \leq \mathcal H ^ { N-1} (\partial U _\e) \,,
\end{equation} 
where $U _\e$ is the convex set 
$$U _\e:= \Big \{ (x', x_N) \ :\  (x', \varphi (x') ) \in \om _ \e\, , \ \overline x_N  \leq x_N \, , \ \| (x', x_N) \| \leq \| x^* \|  \Big \} \,. $$ 
The boundary of $U _\e$ can be decomposed as
$\partial U _\e  = \partial  U _\e  ^{(1)}  \cup \partial  U _\e  ^{(2) }  \cup  \partial  U _\e  ^{(3) } $, with 
$$\begin{aligned}
& \partial  U _\e  ^{(1)}  : = \big \{ (x', x_N)  \in \partial U _ \e \ :\ x_N = \overline x _N \big \} 
\\ \noalign{\medskip} 
& \partial  U _\e  ^{(2)}  : = \big \{ (x', x_N)  \in \partial U _ \e \ :\ \| (x', x_N) \| = \| x^* \|  \big \} 
\\ \noalign{\medskip} 
& \partial  U _\e  ^{(3)}  : = \big \{ (x', x_N)  \in \partial U _ \e \ :\   (x', \varphi (x')) \in \partial \om _ \e \big \} \,.
 \end{aligned}
 $$ 
Since $\Om \subset B _N$, we have 
\begin{equation}\label{f:12} \mathcal H ^ { N-1} \big ( \partial  U _\e  ^{(i)} \big ) \leq N ^ { N-1} \mathcal H ^ { N-1}   (\sigma _\e ) \qquad \text{ for } i = 1, 2 \,.
\end{equation} 
On the other hand, the surface measure of the ``lateral''  portion $ \partial  U _\e  ^{(3)} $ of the boundary 
satisfies the estimate 
\begin{equation}\label{f:3prima} \mathcal H ^ { N-1} \big ( \partial  U _\e  ^{(3)} \big ) \leq N ^ { N-2} \mathcal H ^ { N-2}   (\partial \sigma _\e ) h_\e  \,,
\end{equation}   
where $h_\e$ is the ``height'' of $U _\e$, namely the length of its intersection with the $e_N$-axis. In order to give an upper bound for $h_\e$, we look at a section by a plane $\Pi$ containing the axis $e_N$.
 We have that $\Pi \cap  \{ \|x\| = \| x ^* \| \}$ is a circumference,  $\Pi \cap \Gamma (0)$ is a circular sector, and the intersection between 
$\Pi \cap  \{ \|x\| = \| x ^* \| \}$ and $\partial \Gamma (0)$ consists of two points.  We fix one of these two points, and we name it $P$. Still in the plane $\Pi$, 
among the two tangent lines to 
$B _1$ passing through $P$, we choose the one  which meets $\Gamma(0)$, and we denote by $P'$ the tangency point of such tangent line in $\partial B _1$. 
We set for brevity $\ell:= \| x ^*\|$, we denote by $2 \e$ the opening angle of the sector $\Pi \cap \Gamma (0)$
and  we let $\theta:= \arcsin \big ( \frac {1}{ \ell} \big )$. 
Then, if we fix in  $\Pi$  a system of cartesian coordinates $(X, Y)$ with $Y$ aligned with $e_N$, 
$P$ and $P'$ have coordinates
$$P= ( \ell \sin \e, \ell \cos \e)\, , \qquad P'= ( - \cos \theta, \sin \theta )\,,$$
see Figure \ref{fig:1}. 

\begin{figure} [h] 
\centering   
\def\svgwidth{7cm}   
\begingroup%
  \makeatletter%
  \providecommand\color[2][]{%
    \errmessage{(Inkscape) Color is used for the text in Inkscape, but the package 'color.sty' is not loaded}%
    \renewcommand\color[2][]{}%
  }%
  \providecommand\transparent[1]{%
    \errmessage{(Inkscape) Transparency is used (non-zero) for the text in Inkscape, but the package 'transparent.sty' is not loaded}%
    \renewcommand\transparent[1]{}%
  }%
  \providecommand\rotatebox[2]{#2}%
  \ifx\svgwidth\undefined%
    \setlength{\unitlength}{265.53bp}%
    \ifx\svgscale\undefined%
      \relax%
    \else%
      \setlength{\unitlength}{\unitlength * \real{\svgscale}}%
    \fi%
  \else%
    \setlength{\unitlength}{\svgwidth}%
  \fi%
  \global\let\svgwidth\undefined%
  \global\let\svgscale\undefined%
  \makeatother%
  \begin{picture}(6,1)%
    \put(0.6,0.1){
     \includegraphics[height=6.5cm]{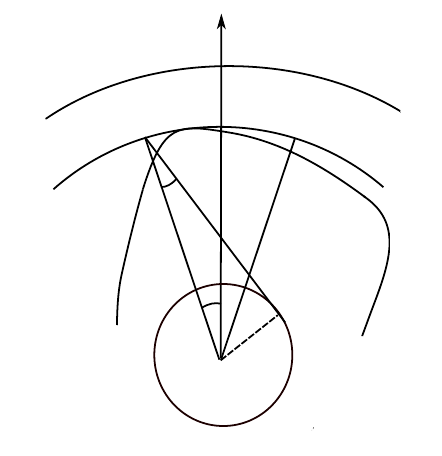}} %
    \put(1.45, 0.78){\color[rgb]{0,0,0}\makebox(0,0)[lb]{\smash{$\partial B _N$}}}%
    \put(1.42, 0.62){\color[rgb]{0,0,0}\makebox(0,0)[lb]{\smash{$\partial B _{ \|x^*\| } $}}}%
    \put(1.42, 0.5){\color[rgb]{0,0,0}\makebox(0,0)[lb]{\smash{$\partial \Omega$}}}%
    \put(1.22, 0.3){\color[rgb]{0,0,0}\makebox(0,0)[lb]{\smash{$\partial B _1$}}}%
    \put(0.97, 0.785){\color[rgb]{0,0,0}\makebox(0,0)[lb]{\smash{$x^*$}}}%
    \put(0.885, 0.76){\color[rgb]{0,0,0}\makebox(0,0)[lb]{\smash{$P$}}}%
    \put(1.195, 0.385){\color[rgb]{0,0,0}\makebox(0,0)[lb]{\smash{$P'$}}}%
    \put(1.135, 0.46){\color[rgb]{0,0,0}\makebox(0,0)[lb]{\smash{$P''$}}}%
    \put(0.965, 0.61){\color[rgb]{0,0,0}\makebox(0,0)[lb]{\smash{$\theta$}}}%
    \put(1.03, 0.415){\color[rgb]{0,0,0}\makebox(0,0)[lb]{\smash{$\e$}}}%
 \end{picture}%
\endgroup%
\null\vskip -1 cm 
\caption{The geometric argument in the proof of Lemma \ref{l:trace}.}
\label{fig:1}   
\end{figure}

 We observe that, 
if  $P''= (P''_X, P'' _Y)$ denotes the intersection point between the line segment $[P, P']$ and 
$\Pi \cap \partial \Gamma (0)$ (i.e., the intersection of $[P, P']$ with the straight line $X = - (\tan \e) Y$),   $h _\e$  admits the upper bound
\begin{equation}\label{f:boundh}
h _\e \leq \ell - P'' _Y\,.
\end{equation}
In order to determine $P''_Y$, it is enough to solve the linear system 
$$\begin{cases} 
X = - Y \tan \e  & \\ 
\noalign{\bigskip}
\displaystyle \frac{ X + \cos \theta  }{Y - \sin \theta} = \frac{ \ell \sin \e + \cos \theta} {\ell \cos \e - \sin \theta } \,,
\end{cases}
$$ 
the last equation being a consequence of the fact that $P,P'$, and $P''$ are aligned. 
Then a straightforward computation gives, in the limit as $\e \to 0$, 
\begin{equation}\label{f:p''}
P'' _Y = \ell - 2 \frac{\cos \theta}{\sin ^ 2 \theta } \e + o ( \e) \,.
\end{equation}
From \eqref{f:boundh} and \eqref{f:p''}, and recalling that $\ell = \frac{1}{\sin \theta}\leq N$, we obtain
$$ h _\e \leq 2 \frac{\cos \theta}{\sin ^ 2 \theta } \e + o ( \e) \leq2 N ^ 2 \e + o ( \e)\,, $$ 
which combined with \eqref{f:3prima} gives 
\begin{equation}\label{f:3seconda}
\begin{aligned}
\mathcal H ^ { N-1} \big ( \partial  U _\e  ^{(3)} \big ) & \leq N ^ { N-2} \mathcal H ^ { N-2}   (\partial \sigma _\e ) \cdot 2 N ^ 2 \e + o ( \e)  \,,
\\ 
& =  2  N ^ { N}  (N-1) \mathcal H ^ { N-1}   ( \sigma _\e )  + o ( \e)  \,,
 \end{aligned}
\end{equation} 
In view of \eqref{f:mon}, \eqref{f:12}, and \eqref{f:3seconda}, we finally have
$$\limsup _{ \e \to 0} \frac{\mathcal H ^ { N-1} (\om _\e)}{ \mathcal H ^ {N-1} (\sigma _\e)} \leq 
2N ^ { N-1}  + 2 N ^ N (N-1) \leq 4 N ^ { N+1} \,. 
$$ 
\qed

\bigskip

	{\bf Proof of Proposition \ref{ambu25}}. 
	  Let $a_1 \geq a_2 \geq \dots \geq a_N $ be the John semi-axes of  $\Om$, and assume without loss of generality that 
	$a_1= 1$. 
	Set  $A: = T (\Om)$, where  
	$T(x)= X$ is given by 
	$$
	X_1=x_1, \quad X_2= \frac{x_2}{a_2}  , \quad \dots\quad,   \quad X_N= \frac{x_N}{a_N} \,,
	$$
and let	$v$ be defined on $A$ by 
		$$
	v(X_1,\dots , X_N) := \overline u(X_1, a_2  X_2, \dots , a_N X_N )
\,.$$

Setting $c = (1,  c_2, \dots, c _N)  :=  (1,  \frac{1}{a_2^2}, \dots, \frac{1}{a_N^2} ) $, and
$$|\nabla v| _ c:=   \Big ( \sum _{i = 1} ^N c_i \big | \frac{\partial v}{\partial X_i} \big | ^ 2 \Big ) ^ { \frac 1 2}\,.$$  
it holds 
	$$ \mu _ p (\Om):= 
	\frac{{\int_{\Omega} \abs{\nabla \overline u}^p }}{{\int_{\Omega}  \overline u^p} }= 
	\frac{{\int_{A} \abs{\nabla  v}_c ^p }}{{\int_{A}   v^p} } = : \mu _ {p, c} ( A)  \,.
	$$
	Since $a_1$ is comparable to $D_\Om$, in terms of  $v$  the inequality \eqref{ambu40} reads		\begin{equation}
		\label{intermedio.1}
		\norma{v}_{L^\infty(A)} \leq C \mu_{p, c} (A) ^{ \frac { N}{p}} \norma{v}_{L^p(A)} \,.
	\end{equation} 
	 To prove \eqref{intermedio.1}, we apply Moser iteration scheme. Using the weak form of the equation satisfied by $\overline u$, $v$  satisfies
	\begin{equation}\label{f:weakv} \sum_{i= 1} ^N  \int_{A}   |\nabla v| _ c ^ { p-2} c_i \frac{\partial v}{\partial X_i}\frac{\partial \vphi}{\partial X_i}= \mu_{p,c} (A) \int_A  |v| ^ { p-2} v \vphi \qquad  \forall \vphi \in W^{1, p}(A) \,.
	\end{equation} 
	 We claim that, 
	if a solution $v$ to \eqref{f:weakv} belongs to $L^q(A)$ for some $q \geq p$, then it belongs also to $L^{\frac{q N}{N-1}}(A)$;  more precisely, 
for a constant   $C_1= C_ 1(N, p)$ which can be explicitly computed  (see eq.\eqref{f:C1} below)    it holds 
			\begin{equation}\label{f:moser} 
	\Big( \int_{A} |v|^{\frac{
  qN  }{N-1}}\Big )^{\frac{N-1}{N}} \leq 
	 C  _1   \mu_{p,c} (A) 
	\frac{ (\frac{q}{p} ) ^p}{q-p+1}	\int_{A} |v|^q\,. 
	\end{equation} 
Indeed, if $v \in L ^ q (A)$, we can  take 	
	$\varphi = |v|^{q-p}v$ as test function in \eqref{f:weakv} for any $q \geq p$. We obtain 
	$$
	\sum_{i=1}^N \int_{A} |\nabla v| _ c ^ { p-2} c_i \frac{\partial v}{\partial X_i}\frac{\partial (|v|^{q-p}v)}{\partial X_i} = \mu_{p,c} (A) \int_A |v|^q\,,
	$$
By direct computation, the above equality can be rewritten as
$$(q-p+1) \int_A  |\nabla v| _ c ^ { p } \,  |v|^{q-p} = \mu_{p,c} (A) \int_A |v|^q\,,
	$$
 or equivalently as 
$$ \frac{ (q-p+1)  }{ \big ( \frac q p \big ) ^ p } \int_A  |\nabla   |  v ^ {\frac q p}     |   | _ c ^ { p }  = \mu_{p,c} (A) \int_A |v|^q\,.
	$$ 
From the inequality 
$c_i \geq 1$, we infer that
	$$
	\int_A \Big | \nabla |v|^{\frac{q}{p}}  \Big | ^p  \leq 	\mu_{p,c } (A) \frac{ \big ( \frac q p \big ) ^ p }  { (q-p+1)  }  \int_A |v|^q.
	$$
	
Now, for every positive constant $\alpha$, we have 
$$
\begin{aligned} 
		  \int_A \abs{\nabla |v|^{q}}&= 	\int_A  \big | \nabla (|v|^{\frac{q}{p}})^p\big | 
		= 	p \int _A |v|^{\frac{q(p-1)}{p} } \alpha ^ {\frac1 p}  \big | \nabla |v|^{\frac{q}{p}}\big | 
		\frac{1}{ \alpha ^ {\frac1 p} } 
		\\
		& \leq 	p  \Big [ \int _A \frac{1}{\frac{p}{p-1}}  |v|^{q } \alpha ^ {\frac{1}{ p-1}} + \int _A   \frac{1}{p}  \big | \nabla |v|^{\frac{q}{p}}\big |  ^p 
		\frac{1}{ \alpha }\Big ] \\
		& =	(p-1)    \alpha ^ {\frac{1}{ p-1}}   \int _A   |v|^{q } + \frac{1}{ \alpha } 
 \int _A   \big | \nabla |v|^{\frac{q}{p}}\big |  ^p 
		 		 \,.
	\end{aligned}
	$$ 
	
	We infer that 
				\begin{equation}\label{f:alfa1} 
	\int_{A} |v|^q +  \int_A \abs{\nabla |v|^{q}}  \leq 
		 	   \Big (1+ (p-1)    \alpha ^ {\frac{1}{ p-1}}   +  \frac 1\alpha \mu_{p,c } (A) \frac{ \big ( \frac q p \big ) ^ p }  { (q-p+1)  }  \Big ) 	\int_{A} |v|^ {  q  }  \,.  
\end{equation}

By the continuity of the embedding of $BV ( \R ^N)$ into $L ^ {\frac{N}{N-1}} (\R^N)$,   and  of the trace operator from $W ^{1, 1} (A)$ to $L ^ 1 (\partial A)$, 
 	for every  $w \in W ^ { 1, 1} ( A)$ (extended to zero outside $\overline A$) we have 
	\begin{equation}
		\label{sobol} \Big ( \int_A  \abs{w}^{\frac{N}{N-1}} \Big ) ^{\frac{N-1}{N}} \leq    C_2   \Big (  \int_A \abs{\nabla w} + \int_{\partial A} \abs{w} \Big )  \leq   C_3   \Big  ( \int_A \abs{\nabla w} +\int_A \abs{ w} \Big )\,.\end{equation}
		where  the constants  $C_2$ and $C_3$   are  purely dimensional (because 
$A$ has  outradius and  inradius  controlled respectively from above and from below). 
  Specifically, denoting by $\om _N$ the Lebesgue measure of the unit ball in $\R ^N$,  and using Lemma \ref{l:trace}
we have
$$C_2 = N ^ {-1}  \om _N ^ {-\frac{1}{N} } \, , \qquad C_3 =  C_2 (1+ 4 N ^ { N +1} ) = N ^ {-1}  \om _N ^ {-\frac{1}{N} }  (1+  4 N ^ { N +1} )   \,. $$

		Thus, applying \eqref{sobol}  with $w = |v|^q$,   we obtain 
		$$
	\Big ( \int_{A} |v|^{\frac{q N}{N-1}}\Big )^{\frac{N-1}{N}} \leq  C _3   \Big (1+ (p-1)    \alpha ^ {\frac{1}{ p-1}}   +  \frac 1\alpha \mu_{p,c } (A) \frac{ \big ( \frac q p \big ) ^ p }  { (q-p+1)  }  \Big ) 	 	\int_{A} |v|^q\,.
	$$
By the Payne-Weinberger type inequality for the first Neumann eigenvalue of the $p$-Laplace operator, recalling that   $a_ 1= 1$,  so that $D_\Om\le 2N$, we have  $$\mu_{p,c} (A)=  \mu _ p (\Om) \geq   ( \frac{\pi_p}{D_\Om}  ) ^ p  \geq( \frac{\pi_p}{2N}  ) ^ p  \,.$$ 
 Thus,  for  $\alpha$ such that   \begin{equation}\label{f:alfa}
 1+ (p-1)    \alpha ^ {\frac{1}{ p-1}}   \leq \frac 1\alpha ( \frac{\pi_p}{2N}   ) ^ p  \frac{ \big ( \frac q p \big ) ^ p }  { (q-p+1)  }\,,
 \end{equation}  
we have 
 $$1+ (p-1)    \alpha ^ {\frac{1}{ p-1}}   \leq \frac 1\alpha \mu_{p,c } (A) \frac{ \big ( \frac q p \big ) ^ p }  { (q-p+1)  }   \,,$$
	so that we shall obtain the inequality \eqref{f:moser}, with  $ C_1= \frac{ 2C_3} {\alpha}$.
	 
Let us determine for which values of $\alpha$ the inequality \eqref{f:alfa} is satisfied. 
We claim that this occurs provided
$$\alpha \leq \alpha _{N,p}:= \min \Big \{ \frac{1}{(2N) ^p}  , \Big ( \frac{1}{p-1} ( ({\pi_p}  ) ^ p -1) \Big  )^  {p-1} \Big \} \,.$$

Indeed, for such values of $\alpha$ we have
$$\alpha ^ {\frac{1}{ p-1}} \leq \Big ( \frac{1}{p-1} ( ({\pi_p}   ) ^ p -1) \Big   )$$ 
and hence also 
$$  1+ (p-1) \alpha ^ {\frac{1}{ p-1}} \leq  ({\pi_p}   ) ^ p \leq  \frac{1}{\alpha} \big (\frac{\pi_p}{2N}   \big  ) ^ p  \leq \frac 1\alpha \big ( \frac{\pi_p}{2N}   \big  ) ^ p  \frac{ \big ( \frac q p \big ) ^ p }  { (q-p+1)  } \,,$$
where the last inequality is due to the fact that the function   $\psi ( q):=\frac{ \big ( \frac q p \big ) ^ p }  { (q-p+1)  }  $ is monotone increasing for $q \geq p$.

We next observe that  $\alpha _{N, p} = \frac{1}{(2N) ^p}$. Indeed we claim that, for every $p  >  1$, there holds:   
$$ \Big ( \frac{1}{p-1} ( ({\pi_p}  ) ^ p -1) \Big  )^  {p-1}   \geq 1\,. $$
This is equivalent to $\pi_p^p \ge p$ or $\pi_p\ge p^\frac 1p$, or
 $$ 2\pi \frac{(p-1) ^ {\frac 1 p}}{p \sin  \big (\frac \pi p \big ) } \ge p^\frac 1p.
$$
 It is easy to check that $p^\frac 1p \le e^\frac 1e$ from the   piecewise monotonicity of $x \to \frac{\ln(x)}{x}$ on $(1, +\infty)$ so that it is enough to prove
 $$\frac{2\pi}{e^\frac 1e} (p-1) ^ {\frac 1 p}\ge p  \sin  \big (\frac \pi p \big ).$$
 We distinguish two cases. If $p \in (1,2]$ we have, using the inequality $\sin(x) \le x$ for $x\ge 0$, 
 $$p  \sin  \big (\frac \pi p \big ) = p  \sin  \big (\pi -\frac \pi p \big )\le p\big (\pi -\frac \pi p \big )= \pi(p-1),$$
 so it is enough to prove
 $$\frac{2\pi}{e^\frac 1e} (p-1) ^ {\frac 1 p} \ge \pi(p-1).$$
Noticing that   $\frac{2\pi}{e^\frac 1e} \approx 4.34 >\pi$, the inequality is
 true   for $p \in (1,2]$, since in this case we have $(p-1)^{\frac 1 p} \ge (p-1)$.
 
  If $p \in (2,+\infty)$ 
  we have
 $$p  \sin  \big (\frac \pi p \big ) \le p  \frac {\pi}{p}= \pi,$$
so it is enough to prove
 $$\frac{2\pi}{e^\frac 1e} (p-1) ^ {\frac 1 p} \ge \pi$$
 which is true since in this case $(p-1) ^ {\frac 1 p} \ge 1$.
 
 We conclude that  the inequality \eqref{f:moser} holds true with 
 \begin{equation}\label{f:C1} C_1= \frac{ 2C_3} {\frac{1}{(2N) ^p}} = 2 ^ { p+1} N ^ p C_3  = 2 ^ {p+1} N ^ p  \Big ( 
  \frac{1}{N} \om _N ^ {-\frac{1}{N} }  (1+  4 N ^ { N +1} ) \Big )    \,.
  \end{equation}

	We now apply such inequality recursively: setting $q _k=  p \big ( \frac{N}{N-1}  \big ) ^ k$, this gives
	$$
	\norma{v}_{L^{q_{k+1}} (A)} \leq  \Big (  C_1    \mu _ {p,c} (A)\Big )  ^{\frac{1}{q_k} }    
	\left(\frac{ (\frac{q_k}{p} ) ^p}{q_k-p+1}  \right)^{\frac{1}{q_k} }	\norma{v}_{L^{q_{k}}(A)}\,.
	$$
	In the limit as $k \to + \infty$ we obtain 
		$$
	\norma{v}_{L^\infty(A)} \leq \prod_{k=0}^\infty  \Big ( C_1    \mu_{p,c}(A) \Big )^{\frac{1}{q_k} } \left(\frac{ (\frac{q_k}{p} ) ^p}{q_k-p+1}  \right)^{\frac{1}{q_k} } \norma{v}_{L^p(A)}.
	$$
	
By noticing that 
		$$
	\sum_{k \geq 0} \frac{1}{q_k} = \frac{1}{p} \sum_{k \geq 0} \left(\frac{N-1}{N}\right)^k= \frac{N}{p},
	$$
	and
	$$
	\beta (N, p)  := \prod_{k=0}^\infty\left(\frac{ (\frac{q_k}{p} ) ^p}{q_k-p+1}  \right)^{\frac{1}{q_k} }  
	\,,$$
  we  obtain that the inequality \eqref{intermedio.1} is satisfied by taking
	$$
	\begin{aligned}
	C=C(N,p)& =  C_1    ^ \frac{N}{p}  \beta (N, p)
	\\ 
	& =  2 ^  {  (\frac{p+1}{p} ) N } N ^N     \big ( \frac{1}{N} \om _N ^ {-\frac{1}{N} } (1+ 4 N ^ { N +1} ) \big )   ^ { \frac{N}{p}  }  \beta (N, p)
	 \,.
	\end{aligned} 
		$$ 

Finally we observe that, since $q_k-p+1 \geq 1$, we have, writing for brevity $\delta:=  \frac{N}{N-1}$ 
$$\beta (N, p) \leq \prod_{k=0}^\infty\big({ \frac{q_k}{p} }  \big)^{\frac{p}{q_k} }   =  
\prod_{k=0}^\infty\big({ \delta ^k }  \big)^{ ( \frac{1}{\delta } ) ^k }  = \delta ^ { \sum _{k \geq 0} \frac{k}{ \delta ^k}} = \delta ^ {N(N-1)} = 
\big ( \frac{N}{N-1} \big  ) ^{N (N-1)}  \,. $$ 

Taking also into account that $\frac{p+1}{p} \leq 2$, we conclude that $C (N,p)$ is not larger than the constant $ \caone$ defined in \eqref{f:caone}.

\qed

\bigskip

	{\bf Proof of Corollary \ref{c:collapse}}.  We consider the sequence of open bounded convex domains in $\R ^ { N+m}$ given by 
	$$\Om _\e:= \Big \{ (x, y) \in  \Om  \times \R ^m \ :\ x \in \Om   \, , \     \| y \| _{\R ^m}  <   \e    \om _m ^ { -\frac{1}{m} }  
	 \phi ^{ \frac{1}{m} } (x) \Big \}  \,.$$ 
	 The inequality \eqref{f:inftybound} follows by applying Proposition \ref{ambu25} to the sets $\Om _\e$ and passing to the limit as $\e \to 0$ according to Proposition \ref{abf209}. 
	 \qed

	\section{Proof of Theorem \ref{t:Ngap}}\label{sec:proof1}
	
 	We proceed as follows: in Section \ref{s:s1} we prove  the  new tool
with respect to the linear case, which is a refinement of a one-dimensional weighted nonlinear Poincar\'e inequality, inspired from \cite{ENT, FNT12}; in Section \ref{s:s2} we introduce the concept of modified Payne-Weinberger partitions, which is adapted from \cite{ABF24}, and we prove the rigidity of the inequality \eqref{paynewein}; 
in Section \ref{s:s3} we give an estimate for the Lebesgue measure of the cell of the partitions, which is obtained by exploiting the $L ^ \infty$ estimates of Section \ref{sec:infty}; finally in Section \ref{s:s4} we prove Theorem \ref{t:Ngap}.

 We shall need the observation that 
$$ \lim_{p \to 1_+}    (\pi _p)   ^ p=2,  \quad \lim_{p \to +\infty}  (\pi _p)  ^ p=+\infty, \quad 2< (\pi_p) ^p<+\infty.$$
While  the computation of the limits is elementary, to prove the lower bound $2$ for $(\pi _p )  ^p$ one has 
 to split the discussion in the  two cases $p\ge 2$ and $1<p<2$. Using respectively 
 $\sin\big (\frac \pi p \big ) < \frac{\pi}{p} $ 
 and $\sin\big (\frac \pi p \big ) = \sin\big (\frac {(p-1)\pi}{ p} \big ) < \frac {(p-1)\pi}{ p}$, one gets 
 $$
 \begin{cases} 
\pi_p^p = \Big (2 \pi \frac{(p-1) ^ {\frac 1 p}}{p \sin  \big (\frac \pi p \big ) } \Big)^p\ge \Big (2 \pi \frac{(p-1) ^ {\frac 1 p}}{p   \big (\frac \pi p \big ) } \Big)^p= 2^p (p-1) > 2
& \text { if } p \ge 2 
\\ \noalign{\medskip} 
\pi_p^p = \Big (2 \pi \frac{(p-1) ^ {\frac 1 p}}{p \sin  \big (\frac \pi p \big ) } \Big)^p\ge \Big (2 \pi \frac{(p-1) ^ {\frac 1 p}}{p   \frac {(p-1)\pi}{ p}   } \Big)^p= 2 \Big (\frac{2}{p-1}\Big)^ {p-1} > 2.
& \text { if } 1< p < 2  \,.
\end{cases}
$$

  \subsection{One dimensional refined version of nonlinear Poincar\'e inequality}\label{s:s1} 
By  Payne-Weinberger reduction argument,  
the key point in order to estimate  from below $\mu _ p (\Om, \phi)$   for a domain $\Om \subset \R ^N$, say  of diameter $1$,   amounts to estimate from below  a weigthed one-dimensional Neumann eigenvalue of the type
 $\mu _ p (I_d, h \phi)$,  
where $I_d$ is a line segment  of length $d \in (0, 1]$ contained into $\Om$,  
$\phi$ is the preassigned power-concave weight, and $h$ is the  power-concave function giving 
the $(N-1)$-dimensional measure of the cell's section orthogonal to    $I_d$.   
Since $h \phi $ is log-concave,  we have that 
$$ \mu _p (I_d, h \phi) \geq  \frac{ \big ({\pi_p}  \big ) ^ p}{d^p} \,.   $$

\medskip
In Proposition \ref{p:1d} below, we are going to provide a refined version of the above inequality. 
In order to express the involved constants, let us define   
 \begin{equation}
    \label{f:cafour}  \cafour  =  \cafour (p, m):=   \caone (\pi _ p ^ p + 1 )^{\frac{ m +1 }{p}} \frac{1}{{\left (\int_{I} 
( \min \{x , 1 - x \} ^ {m+1} ) \right )}^{\frac 1 p} } \,,   
\end{equation} 
  where $\caone$  denotes  the value  of  the constant  defined in  \eqref{f:caone} at $m+1$ and $I=I_1$.  
 
Moreover, let us introduce 
two auxiliary functions of four variables $(p, a , s,t)$, with  $p>1$, $a\in (0, 1)$, and $s,t >0$,   respectively by
 \begin{equation}\label{f:b0}
b _ 0 = b _0 ( p, a, s , t) := \min \Big \{ \frac{a}{2},  \Big ( \frac{1}{2} \frac {p-1}{p} \frac{s}  { t}  \Big )  ^ {\frac{p}{p-1}  }  \Big \} \,,
\end{equation}    
and 
 \begin{equation}\label{f:M} M = M ( p, a, s, t) := \begin{cases}
  \Big ( \frac{p-1}{p}  \Big ) ^ { p-1} \cdot \min \Big \{ \frac{b _0 ^2}{2}, \frac{b_0}{2}    s ^{  \frac{p}{p-1} }  \Big \}  & \text{ if } p \in (1, 2) \, , 
\\ \noalign{\medskip} 
\frac{1}{2} \Big ( \frac{p-1}{p}  \Big ) ^ { p-1}  
 b _ 0 ^ { p-1}   s  ^ p  & \text{ if } p \geq 2 \,.
\end{cases}
\end{equation}     
For $\cafour(p, m)$ and $M (p, a , s, t)$ defined as above, set

\begin{equation}\label{f:cafive}
\cafive  = \cafive (p,m):=  \,     \frac{1}{2 ^ {\frac{1}{p-1}}} 
  \frac{e ^  { -  (m+1)  2  ^ {m+3}  \cafour ^ { p(m+2)}    }   }{  \cafour  ^  {\frac{2p^2-p}{p-1} }} 
M  \Big (   p,  \frac{1}{4 \cafour ^ p } ,  \frac{1}{(4 \cafour ) ^ {\frac{1}{p-1}} }\,  ,
 ( 4 \cafour  ^ p) ^ { m+1}   \big ( (\pi _p ) ^ p + 1 \big ) \Big ) 
 \end{equation}
 \begin{equation} \label{f:casix} 
\casix =  \casix (p, m):=  \frac{1}{4 \cafour ^ p }  \,. 
\end{equation}

\begin{proposition}\label{p:1d}  Let $\cafive = \cafive (p, m)$ and $\casix = \casix (p, m)$ be defined by \eqref{f:cafive} and \eqref{f:casix}. For any function  $ f$ on $I= (0, 1)$ such that $\| f \| _\infty = 1$ and $f=h \phi$, being $h$  positive concave and $\phi$   positive $(\frac{1}{m})$-concave,
it holds 
\begin{equation}\label{f:1dq}
\mu _ p ( I , f) \leq    (  {\pi _p} ) ^ p   +1 \ \Rightarrow \ \mu _ 1 ( I , f)  \geq   (  {\pi _p} ) ^ p   +  \cafive  \min _{[\casix, 1 - \casix]} \Big ( \frac{h'}{h} \Big ) ^ 2 \,.
\end{equation}
\end{proposition}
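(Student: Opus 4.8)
\textbf{Proof strategy for Proposition \ref{p:1d}.} The plan is to estimate the $L^1$-Rayleigh quotient $\mu_1(I,f)$ from below by subtracting off a quantitative amount depending on the logarithmic derivative of the concave factor $h$. I would start from an optimal (or nearly optimal) eigenfunction $u$ for $\mu_1(I,f)$, which by the $L^1$ theory may be taken monotone with a single zero at some point $x_0\in I$, so that up to a sign $u\ge 0$ on $(x_0,1)$ and $u\le 0$ on $(0,x_0)$, with the constraint $\int_I f|u|^{-1}u=0$ i.e.\ $\int_{x_0}^1 fu = \int_0^{x_0}f(-u)$. The first step is to localize: the hypothesis $\mu_p(I,f)\le \pi_p^p+1$ forces $f$ not to be too degenerate, and via the $L^\infty$-bound for weighted Neumann eigenfunctions from Corollary \ref{c:collapse} (applied in one space dimension, with the dimensional constant $\caone$ evaluated at $m+1$, this is exactly where $\cafour$ enters through \eqref{f:cafour}) together with the log-concavity of $f=h\phi$, one controls $f$ from below on a fixed sub-interval $[\casix,1-\casix]$ by a power of $\cafour$. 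This is what makes the constants $\cafive,\casix$ come out in the explicit form \eqref{f:cafive}--\eqref{f:casix}.

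The heart of the argument is the one-dimensional refinement. I would write, following the ENT/FNT12 scheme, $\mu_1(I,f)=\dfrac{\int_I f|u'|}{\int_I f|u|}$ and compare it with the ``frozen'' problem in which $h$ is replaced by a constant. The point is that the discrepancy
\[
\int_I f|u'| - \pi_p^p\int_I f|u|
\]
(or rather the corresponding quantity after the reduction to the $L^1$ problem, using that $\mu_1$ and $\mu_p$ are linked through the $p$-sine substitution and that $\pi_p^p$ is the sharp constant for $f\equiv$ const) can be bounded below by a term quadratic in $h'/h$. Concretely I would integrate by parts / use the ODE characterization: the eigenfunction essentially solves $(f|u'|^{p-2}u')' = -\mu f|u|^{p-2}u$ in the $p$-case, and in the $L^1$ limit one gets a transport-type identity; writing $f = e^{\psi}$ with $\psi=\log h+\log\phi$ concave, the excess in the Rayleigh quotient is governed by $\int \psi' \,(\text{sign change data})$, and convexity/concavity lets one keep a positive remainder proportional to $\min_{[\casix,1-\casix]}(h'/h)^2$. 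The auxiliary functions $b_0$ and $M$ in \eqref{f:b0}--\eqref{f:M} are precisely the quantitative yield of this step: $b_0$ is the length of a sub-interval on which $|h'/h|$ stays comparable to its minimum and on which $|u|$ stays bounded away from $0$, and $M$ is the resulting lower bound for the excess, with the case split $p<2$ versus $p\ge2$ reflecting the different behaviour of $t\mapsto|t|^p$ near $0$.

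In more detail, the steps in order would be: (1) normalize $\|f\|_\infty=1$, reduce to a monotone nonnegative-on-each-side eigenfunction $u$ for $\mu_1(I,f)$ with zero at $x_0$; (2) use Corollary \ref{c:collapse} plus log-concavity of $f$ to get a two-sided control of $f$ (hence of $u$, via the equation) on $[\casix,1-\casix]$, producing the constant $\cafour$; (3) if $\min_{[\casix,1-\casix]}|h'/h|$ is $0$ there is nothing to prove, so assume it is some $\tau>0$ and fix a point where $|h'/h|\ge\tau$; around it, on an interval of length $\sim b_0$, the concavity of $\log h$ keeps $h'/h$ of one sign and size $\gtrsim\tau$, while $u$ stays $\gtrsim$ a fixed fraction of its max (here the choice of $\casix$ and the eigenfunction estimate are used again); (4) on that interval, insert the perturbation $f=h\phi$ into the $L^1$-Rayleigh quotient and extract, by an explicit convexity estimate (the content of $M$), a gain of size $\cafive\,\tau^2$ over $\pi_p^p$; (5) collect constants and verify the inequality \eqref{f:1dq} with $\cafive$ as in \eqref{f:cafive}.

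\textbf{Main obstacle.} The delicate part is Step (4): obtaining a \emph{quadratic} (and not merely linear) lower bound in $h'/h$, uniformly in $p\in(1,\infty)$ and with an explicitly trackable constant. A linear-in-$h'/h$ bound is not enough, since it could have the wrong sign after integrating against the sign-changing eigenfunction; one must symmetrize around the node $x_0$, use the $L^1$-orthogonality constraint $\int_{x_0}^1 fu=\int_0^{x_0}f(-u)$ to cancel the first-order term, and keep the second-order remainder. Making this robust as $p\to1^+$ and $p\to\infty$ is what forces the two-case definition of $M$ and the somewhat baroque exponential factor $e^{-(m+1)2^{m+3}\cafour^{p(m+2)}}$ in \eqref{f:cafive}, coming from iterating the $L^\infty$ control of $f$ and $u$; none of these estimates is optimal, but verifying that the chosen $b_0$, $M$, $\cafive$, $\casix$ actually close the loop is the real work.
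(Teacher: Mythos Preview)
Your proposal rests on a misreading: in the statement, $\mu_1(I,f)$ is a typographical slip for $\mu_p(I,f)$ (note that throughout the paper $\mu_p$ denotes the first nontrivial weighted Neumann $p$-Laplacian eigenvalue, and the proof, as well as all later uses of the proposition, work with $\mu_p$). So you are not estimating an $L^1$-Rayleigh quotient $\int f|u'|/\int f|u|$ at all; there is no ``$p$-sine substitution linking $\mu_1$ and $\mu_p$'' in play, and your Steps~(1)--(5) built on an $L^1$ eigenfunction do not address the actual claim.

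More importantly, even setting the notational issue aside, you have not identified the mechanism that produces the \emph{quadratic} gain. In the paper one takes the $p$-eigenfunction $u$ with zero at $z$, glues its rescaled positive and negative parts into a function $v$ on $I$ with $v(0)=v(1)=0$, and tests the equation for $v$ against $e^{\kappa x}v$ where $\kappa=(\log f)'(z)$. This yields the exact identity
\[
\mu_p(I,f)=\frac{\int_I e^{\kappa x}|v'|^p}{\int_I e^{\kappa x}|v|^p}+\frac{\int_I e^{\kappa x}|\psi-\kappa|\,|v'|^{p-1}|v|}{\int_I e^{\kappa x}|v|^p},
\]
with $\psi=(\log f)'$ shifted. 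The first quotient is $\ge\pi_p^p$ by the ENT/FNT12 argument, and the second is nonnegative because $(\psi-\kappa)$ and $v'$ have opposite signs. The quadratic term then comes \emph{not} from any cancellation of a first-order contribution after symmetrizing around the node, but directly from the one-line inequality
\[
(\log f)''=\frac{h''}{h}-\Big(\frac{h'}{h}\Big)^2+(\log\phi)''\le -\Big(\frac{h'}{h}\Big)^2,
\]
which via the mean value theorem gives $|\psi(x)-\kappa|\ge |x-z|\cdot(h'/h)^2(\xi)$. Your description of $b_0$ and $M$ is also off: they do not measure where $|h'/h|$ is large, but arise in Lemma~\ref{l:2cases}, which bounds $\int_0^a x(g')^{p-1}g$ from below with $g$ a shifted copy of $u^\pm$; this is what converts the excess integral $\int |x-z|\,|v'|^{p-1}|v|$ into an explicit positive constant once $\|u\|_\infty$, $\|u'\|_p$ and the location of $z$ are controlled by the $L^\infty$ estimate. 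Without the $e^{\kappa x}v$ identity and the $(\log f)''\le -(h'/h)^2$ step, your outline has no route to the inequality.
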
 

  In the next two remarks, we reformulate the inequality \eqref{f:1dq} first in a rescaled version on the interval $I _d= (0, d)$ with $d <1$ (see \eqref{f:1dqbis}), 
and then  in a ready-to use version where the dependence on $d$ of the involved constants is weakened (see \eqref{f:1dqbis.1}).

\begin{remark} The above inequality scales as follows. If $I_d=(0,d)$ and $f=h \phi$, with $h, \phi:I_d\to \R^+$ concave, $(\frac{1}{m})$-concave, 
respectively,   by applying   to  the weight   $F(x)=f(d\cdot x)$, 
one gets 
\begin{equation}\label{f:1dqbis} 
\mu_p(I_d, f) \leq   \Big( \frac{ {\pi _p} }{d}\Big) ^ p    +\frac{1}{d^p}  \ \Rightarrow \ \mu _ 1 ( I_d , f)  \geq   \Big( \frac{ {\pi _p} }{d}\Big) ^ p   +  \cafive d^{2-p} \min _{[d\casix , d - d\casix]} \Big ( \frac{h'}{h} \Big ) ^ 2 \,.
\end{equation} 
\end{remark}

\begin{remark} \label{r:pip} 
For every $p$, let  $d_p$ be such that $ \Big( \frac{ {\pi _p} }{d_p}\Big) ^ p =     {\pi _p}  ^ p   +1$, that is, 
\begin{equation}\label{f:dp} 
d_p:= 
\frac{  \pi _p   }{  \big ( (\pi _p )   ^ p +1 \big ) ^ {\frac 1p }  } .
\end{equation} 
For $d <d_p$, we have $\mu_p(I_d, f) >\Big( \frac{ {\pi _p} }{d_p}\Big) ^ p =     ( {\pi _p}  ) ^ p   +1$. 
  For $d \in [ d _p, 1]$,     from  
\eqref{f:1dqbis} one gets the following inequality
\begin{equation}\label{f:1dqbis.1} 
  \mu_p(I_d, f) \leq   (  {\pi _p} ) ^ p    + 1   \ \Rightarrow \     \mu _ 1 ( I_d , f)  \geq     ({\pi _p} ) ^ p   +  \cafive\frac 23 \min _{[\casix , d - \casix]} \Big ( \frac{h'}{h} \Big ) ^ 2. \end{equation} 
 Indeed, since  for $d \in [ d _p, 1]$ we have $\mu_p(I_d, f) \leq   \Big( \frac{ {\pi _p} }{d}\Big) ^ p    +\frac{1}{d^p}$, in order to pass from \eqref{f:1dqbis}  to \eqref{f:1dqbis.1}, it is enough to show that  $d^{2-p} \ge \frac 23$.   This is a consequence of 
 the inequality 
 $\frac{2}{3} < d _ p <1$ (which holds true because
$\pi_p^p >2$, so that $d_p = \big(\frac{\pi_p^p}{1+\pi_p^p}\big)^\frac 1p\ge \big( \frac 23\big)^\frac 1p >\frac 23$).  Indeed, if $p \ge 2$ the inequality  $d^{2-p} \ge \frac 23$ is trivial, while if $p \in (1,2)$ then $d^{2-p} \ge d_p^{2-p}\ge d_p > \frac 23 $.

\end{remark}

\medskip 
For the proof of Proposition \ref{p:1d}, we need the following.

\begin{lemma}\label{l:2cases}  Let $M   = M (\cdot, \cdot , \cdot, \cdot) $ be the function defined by \eqref{f:M}. 
Then,  for every function $g \in  W ^ { 1,p} (0, a)$ with $g ( 0)  = 0$ and $g' >0$ a.e.  on $(0, a)$, it holds  
$$ \int_0 ^ a x ( g' (x)  ) ^ { p-1} g ( x) \geq    M ( p, a, \| g \| _\infty, \| g' \| _p)   \, . $$ 
\end{lemma}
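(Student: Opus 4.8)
The strategy is to restrict the integral to a subinterval where the integrand is bounded below in a controlled way, treating separately the convex regime $p\ge2$ and the concave regime $p\in(1,2)$ of the map $u\mapsto u^{p-1}$.

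Since $p>1$, $g$ is absolutely continuous on $[0,a]$ and, being increasing with $g(0)=0$ and $g'>0$ a.e., it is strictly increasing with $s:=\|g\|_\infty=g(a)$; write $t:=\|g'\|_{L^p(0,a)}$. The point of $b_0$ is the elementary estimate obtained by Hölder's inequality on $(0,b_0)$:
$$ g(b_0)=\int_0^{b_0}g'\ \le\ b_0^{\frac{p-1}{p}}\,t\ \le\ \tfrac12\,\tfrac{p-1}{p}\,s\ <\ \tfrac s2, $$
the middle inequality being precisely the defining bound $b_0\le(\tfrac12\tfrac{p-1}{p}\tfrac st)^{p/(p-1)}$. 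Hence there is a unique $c\in(b_0,a)$ with $g(c)=s/2$, and $g\ge s/2$ on $[c,a]$ with $\int_c^a g'=s/2$. I will also use $s=\int_0^a g'\le a^{(p-1)/p}t$ at the end, to absorb extraneous factors.

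For $p\ge2$, since the integrand is nonnegative, $\int_0^a x(g')^{p-1}g\ge b_0\int_{b_0}^a(g')^{p-1}g\,dx$, and Jensen's inequality for the convex map $u\mapsto u^{p-1}$ with respect to the probability measure $g\,dx/\int_{b_0}^a g$ gives
$$ \int_{b_0}^a(g')^{p-1}g\,dx\ \ge\ \frac{\big(\int_{b_0}^a g'g\,dx\big)^{p-1}}{\big(\int_{b_0}^a g\,dx\big)^{p-2}}, $$
where $\int_{b_0}^a g'g\,dx=\tfrac12(s^2-g(b_0)^2)\ge\tfrac38 s^2$ and $\int_{b_0}^a g\,dx\le s\,a$; inserting $b_0\le a/2$ and $b_0\le(\tfrac12\tfrac{p-1}{p}\tfrac st)^{p/(p-1)}$ and simplifying leads to the second branch of \eqref{f:M}. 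For $p\in(1,2)$, I restrict to $[c,a]$, where $g\ge s/2$, so that $\int_0^a x(g')^{p-1}g\ge b_0\cdot\tfrac s2\int_c^a(g')^{p-1}$; since now $u\mapsto u^{p-1}$ is concave, instead of Jensen I use Hölder's inequality with conjugate exponents $\tfrac1{p-1}$ and $\tfrac1{2-p}$,
$$ \int_c^a g'\ \le\ \Big(\int_c^a(g')^{p-1}\Big)^{p-1}\Big(\int_c^a(g')^{p}\Big)^{2-p}, $$
to bound $\int_c^a(g')^{p-1}$ below in terms of $s/2=\int_c^a g'$ and $t^p\ge\int_c^a(g')^p$. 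Multiplying by $\tfrac{b_0 s}{2}$ and distinguishing the two possibilities realizing the minimum in \eqref{f:b0} yields the first branch of \eqref{f:M}, the inner minimum there arising exactly from these two sub-cases.

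I expect the last step — matching constants — to be the only real difficulty. In each of the two cases one splits further according to the two terms of the minimum defining $b_0$ (and, for $p<2$, the minimum inside $M$), and in each sub-case one must track the explicit constants through the Hölder/Jensen bounds and verify an inequality between elementary functions of $p$; the relation $s\le a^{(p-1)/p}t$ is the tool that lets the spurious powers of $t$ (and of $a$) be reabsorbed into $b_0$. The remaining ingredients — restricting the domain, producing the point $c$, and choosing which convexity inequality to apply — are routine.
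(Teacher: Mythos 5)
Your strategy is genuinely different from the paper's, and the difference matters. The paper's central device — which you skip entirely — is the passage to the \emph{nonincreasing rearrangement} $\eta' := \big[(g^{p/(p-1)})'\big]^*$ and its primitive $\eta$ (via Hardy--Littlewood). After this step $\eta'$ is monotone, so for $p\in(1,2)$ there is a well-defined crossing point $t_0$ where $\eta'=1$, and the two branches of the inner $\min$ in \eqref{f:M} arise precisely from comparing $t_0$ with $b_0$. They do \emph{not} arise, as you claim, ``from the two possibilities realizing the minimum in \eqref{f:b0}'': under the hypothesis $g(0)=0$, $g'>0$, H\"older gives $s\le a^{(p-1)/p}t$, hence $\bar b\le(\tfrac{p-1}{2p})^{p/(p-1)}a< a/2$, so $b_0$ \emph{always} equals the second branch and your intended case split is vacuous.

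There is also a concrete quantitative gap in your $p\ge 2$ argument. Inserting your estimate $\int_{b_0}^a g\le sa$ into Jensen produces the lower bound $b_0\,(3/8)^{p-1}\,s^p/a^{p-2}$, and comparing this with $\tfrac12(\tfrac{p-1}{p})^{p-1}b_0^{p-1}s^p$ reduces (using $b_0\le a/2\le 1/2$ and $a\le 1$) to $2^{1-p}\le\big(\tfrac{3p}{8(p-1)}\big)^{p-1}$, equivalently $4(p-1)\le 3p$, i.e.\ $p\le 4$. So as written your bound is strictly below $M$ once $p>4$. The repair is to use $\int_{b_0}^a g\le s(a-b_0)$ together with $b_0(a-b_0)\le a^2/4$, after which one must still check the elementary inequality $(1/4)^{p-2}\le 2\big(\tfrac{3p}{8(p-1)}\big)^{p-1}$ for all $p\ge 2$ (true, since the logarithm of the ratio equals $\ln\tfrac32$ at $p=2$ and has derivative $\ln\tfrac32+\ln\tfrac{p}{p-1}-\tfrac1p\ge\ln\tfrac32>0$); you neither state nor verify this. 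For $p\in(1,2)$, substituting $b_0=\bar b$ into your H\"older bound yields $S=(\tfrac{p-1}{4p})^{p/(p-1)}\,s^{2p/(p-1)}\,t^{-p(3-p)/(p-1)}$, whose power of $t$ lies strictly between the two appearing in the $\min$; one can in fact show $S\ge\min\{A,B\}$ because all three are pure powers of $u=t^{p/(p-1)}$ and the inequality at the crossing of $A$ and $B$ reduces to $\ln\tfrac{p}{p-1}\ge(p-1)^2\ln 2$ on $(1,2]$ (with equality at $p=2$) — but this scaling analysis is nothing like the argument you sketch, and without it the claim that ``simplifying yields the first branch'' is unjustified. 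Finally, note that both your proof and the paper's implicitly need $a\le 1$ at the very end of the $p\ge 2$ case; this is silent in the lemma's statement but is always satisfied where the lemma is invoked.
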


\proof We observe that 
$$  \int_0 ^ a x ( g'  ) ^ { p-1} g  =  \Big ( \frac{p-1}{p}  \Big ) ^ { p-1} \int_0 ^ a x  [ ( g ^ {\frac{p}{p-1}} ) '  ] ^ { p-1}  \geq  
\Big ( \frac{p-1}{p}  \Big ) ^ { p-1} \int_0 ^ a x   [ (( g ^ {\frac{p}{p-1}} )' )  ^* ]   ^ { p-1} 
\,.$$
where $[( g ^ {\frac{p}{p-1}} )' ]   ^*$ denotes the  
nonincreasing rearrangement  of $( g ^ {\frac{p}{p-1}} )'  $, and we have used Hardy-Littlewood inequality. 
If we define the function $\eta$  on $(0, a)$ by 
$$\eta (0) =  0 \, , \qquad \eta ' = [ ( g ^ {\frac{p}{p-1}} )' ]  ^*  \,,$$ 
we can rewrite the above inequality as 
\begin{equation}\label{f:formula1}  \int_0 ^ a x ( g'   ) ^ { p-1} g   \geq  
\Big ( \frac{p-1}{p}  \Big ) ^ { p-1} \int_0 ^ a x   ( \eta '  )   ^ { p-1} 
\,.\end{equation} 
We point out that $\eta$  enjoys the following properties: it is an increasing function, with $\eta ( a) = g(a) ^{  \frac{p}{p-1} }$, 
and it belongs to $W ^ { 1, p} (0, a)$, since
 $$\int _0 ^ a  ( \eta ' ) ^ p =  \int _0 ^ a [ ( g ^ {\frac{p}{p-1}} )'  ]   ^ { p}   = \Big ( \frac{p}{p-1}   \Big ) ^ { p}  \int _0 ^ a ( g')  ^ p g ^  {\frac{p}{p-1}  } 
\leq  \Big ( \frac{p}{p-1}   \Big ) ^ { p} \| g \| _\infty  ^  {\frac{p}{p-1}  }  \int _0 ^ a ( g')  ^ p \,.
$$ 
Using this estimate and H\"older inequality with conjugate exponents $p$ and $p'$, we also obtain that $\eta$ satisfies, for every 
$b \in ( 0, a)$,    
$$\eta ( b) = \int _0 ^ b \eta ' \leq b ^  { \frac{1}{p'} } \Big [  \int _0 ^ b (\eta ') ^ p   \Big ] ^  { \frac{1}{p}   }   \leq 
b ^  { \frac{1}{p'} }  \Big ( \frac{p}{p-1}   \Big )  \| g \| _\infty  ^  {\frac{1}{p-1}  } \| g' \| _p\,.
$$ 
Then we introduce $\overline b$ as the  positive solution to the equation 
$$\overline b ^  { \frac{1}{p'} }  \Big ( \frac{p}{p-1}   \Big )  \| g \| _\infty  ^  {\frac{1}{p-1}  } \| g' \| _p 
=  \frac {  g(a) ^{  \frac{p}{p-1} }   }{2} \big ( = \frac{\eta (a) }{2} \big ) \,, 
$$ 
namely
$$\overline b = \Big ( \frac{1}{2} \frac {p-1}{p} \frac{\| g \| _\infty} { \| g' \| _p}  \Big )  ^ {\frac{p}{p-1}  }\,.$$ 
We set $b _0: = \min  \big \{ \overline b,  \frac{a}{2} \big \} $ and we observe that, by construction, it holds
\begin{equation}\label{f:unmezzo} 
\eta ( b _0 ) \leq \frac{\eta (a)}{2}\,.  
\end{equation} 
We are now ready to conclude the proof, and we proceed  by distinguishing the two cases $p \in (1, 2)$ and $p \geq 2$.

\medskip 
{\it Case $p \in ( 1, 2)$}. We fix  $t \in [0, a]$   such that $\eta ' \geq 1$ on $(0, t)$ and $\eta ' \leq 1$ on $(t, a)$ (notice that this   can be done, 
possibly with one of the two intervals $(0, t)$ or $(t, a)$ empty),  since by construction 
$\eta '$ is nonincreasing).  Then, using the assumption $p  \in ( 1, 2)$, we can continue the estimate \eqref{f:formula1} as
$$\begin{aligned}
\int_0 ^ a x ( g'   ) ^ { p-1} g   & \geq  
\Big ( \frac{p-1}{p}  \Big ) ^ { p-1} \int_0 ^ a x   ( \eta '  )   ^ { p-1}   \geq 
\Big ( \frac{p-1}{p}  \Big ) ^ { p-1}  \Big [ \int_0 ^ t x + \int _ t ^ a x \eta ' \Big ] 
\\  
& = \Big ( \frac{p-1}{p}  \Big ) ^ { p-1}  \Big [ \frac{t ^ 2}{2}  +\int _ t ^ a x \eta '    \Big ]   
\end{aligned} 
$$ 
Now we distinguish two possible situations:
\begin{itemize}\item[--] If  $t \geq b _0 $, we drop the second addendum in last line  and we obtain
$$\int_0 ^ a x ( g'   ) ^ { p-1} g \geq  \Big ( \frac{p-1}{p}  \Big ) ^ { p-1}  \frac{b_0 ^ 2}{2} $$ 
\item[--] If  $b_0 \geq t  $, we drop the first addendum and by using \eqref{f:unmezzo} we obtain 
$$ \begin{aligned}
\int_0 ^ a x ( g'   ) ^ { p-1} g & \geq   \Big ( \frac{p-1}{p}  \Big ) ^ { p-1}  b _ 0 [ \eta (a) - \eta (b_0 )]  \geq    \Big ( \frac{p-1}{p}  \Big ) ^ { p-1} \frac{b_0}{2}    g(a) ^{  \frac{p}{p-1} } 
\\ 
&  = 
 \Big ( \frac{p-1}{p}  \Big ) ^ { p-1} \frac{b_0}{2}    \| g \| _\infty ^{  \frac{p}{p-1} }
\end{aligned} \,.
 $$ 
\end{itemize} 

\medskip 
{\it Case $p \geq 2$}.  In this case  we can continue the estimate \eqref{f:formula1} by using H\"older inequality with conjugate exponents $(p-1)$ and $(p-1)'$, and
then \eqref{f:unmezzo}. We obtain 
$$\begin{aligned}
\int_0 ^ a x ( g'   ) ^ { p-1} g   & \geq  
\Big ( \frac{p-1}{p}  \Big ) ^ { p-1} \int_0 ^ a x   ( \eta '  )   ^ { p-1}   \geq \Big ( \frac{p-1}{p}  \Big ) ^ { p-1}  
\frac {\Big [ \int_0 ^ a x   \eta '  \Big]    ^ { p-1}  } { \Big[  \int_0 ^ a x   \Big ] ^ {\frac{p-1}{(p-1)'}}  } 
 \\ 
 &=   \Big ( \frac{p-1}{p}  \Big ) ^ { p-1}  
\frac {\Big [ \int_0 ^ a x   \eta '  \Big]    ^ { p-1}  } { \big[ \frac{a ^ 2}{2}  \big ] ^ {p-2  } } 
\geq 
 2 ^ { p-2} \Big ( \frac{p-1}{p}  \Big ) ^ { p-1}  
 \frac{  b _ 0 ^ { p-1}  }{a ^ { 2 (p-2)}  } \big [ \eta (a) - \eta (b _0)   \big]    ^ { p-1}  
 \\ 
& \geq 
 2 ^ { p-2} \Big ( \frac{p-1}{p}  \Big ) ^ { p-1}  
 \frac{  b _ 0 ^ { p-1}  }{a ^ { 2 (p-2)}  } \big [\frac{1}{2}  \eta (a)    \big]    ^ { p-1}  
 = \frac{1}{2} \Big ( \frac{p-1}{p}  \Big ) ^ { p-1}  
 \frac{  b _ 0 ^ { p-1}  }{a ^ { 2 (p-2)}  }  \| g \| _\infty ^ p \\
 &   \geq
  \frac{1}{2} \Big ( \frac{p-1}{p}  \Big ) ^ { p-1}  
 {  b _ 0 ^ { p-1}  } \| g \| _\infty ^ p  
  \,.
  \end{aligned} 
$$

\bigskip

{\bf Proof of Proposition \ref{p:1d}}. By approximation, we can assume that the functions $h, \phi$ are smooth  with a strictly positive infimum on $\overline I $. 
In this case, an eigenfunction for  $\mu _ p ( I , f)$ is of class $\mathcal C ^ 1$ 
and  satisfies  
\begin{equation}\label{f:eigpb} 
\begin{cases} 
-  ( f u' |u'| ^ { p-2} )' = \mu _ { p   }( I , f) f  u |u| ^ { p-2}   & \text{ on } I 
\\  
u' (0) = u' (1) = 0 \,;
\end{cases}
\end{equation} 
For convenience, in the sequel we  normalize $u$ so that $\int _ I |u| ^ p f = 1$.  
We also have that $u'<0 $ on $I $,  see  \cite[proof of Lemma 2.4]{FNT12}, so that  $u$   has a unique zero on $I$, that we denote by $z$. 
  We divide the proof into several steps. 

\bigskip
{\bf Step 1. }  We have the lower bound 
\begin{equation}\label{f:primobound}
\mu _ p ( I, f) \geq  \big( {\pi _p} \big ) ^ p    + \frac{ \int _{I } e ^ { \kappa x}  | \psi - \kappa|     |v'| ^ { p-1}  |v| }{ \int _{I } e ^ { \kappa x} |v| ^ { p}} \end{equation}   
where 
$\kappa:= \frac{f' (z)}{f(z)}$,
\begin{equation}\label{f:defv} v (x):= \begin{cases}
 \frac{u ^- (x+z)}{u ^ - (1) } & \text{ for } x \in [0, 1 - z] 
 \\ 
 \frac{u ^+ ( x  - 1 + z) }{u ^+ (0) } & \text{ for } x \in [1 - z, 1]\,,
 \end{cases}
   \end{equation} 
and 
$$ \psi (x):= \begin{cases}
  (\log f)' (x+z)  & \text{ for } x \in [0, 1 - z] 
 \\ 
 (\log f)' (x- 1+z) & \text{ for } x \in [1 - z, 1]\,.
 \end{cases}
  $$

To obtain the lower bound \eqref{f:primobound}, we observe that the equation in \eqref{f:eigpb}  is satisfied pointwise by $u$ and can be rewritten as 
$$-   ( u' |u'| ^ { p-2} )' = \mu _ 1 ( I , f)   u |u| ^ { p-2}  + (\log  f)' u' |u'| ^ { p-2}  \qquad  \text{ on } I\,.$$

 By construction, $v$ is of class $\mathcal C ^ 1$ on $I $, with  $$v (0) = v ( 1 ) = 0 \, , \qquad  \lim _{ x \to (1 - z) ^ \pm }  v ( x) = 1\, ,  \qquad  \lim _{ x \to (1 - z) ^ \pm }  v' ( x) =   0\,.$$ 
 Therefore it satisfies the following equation in  $[ W ^ { 1, p} _0( I ) ]' $
 $$ \begin{aligned} -   ( v' |v'| ^ { p-2} )' & = \mu _ 1 ( I, f)  v |v| ^ { p-2}  + \psi  v' |v'| ^ { p-2}  
\\ 
& = \mu _ 1 ( I, f)   v |v| ^ { p-2}  + \kappa  v' |v'| ^ { p-2}  +  (\psi -\kappa)  v' |v'| ^ { p-2} 
\,, \end{aligned}$$ 
 Taking $e ^ { \kappa x} v$ as a test function gives
 $$ \int _{I }    e ^ { \kappa x} |v'| ^ p   =\mu _ 1 ( I, f)   \int _{I } e ^ { \kappa x} |v| ^ { p}  + 
 \int _{I } e ^ { \kappa x} (  \psi - \kappa)     v' |v'| ^ { p-2}  v \,.$$  
 Hence, 
 $$\mu _ p ( I , f)  = \frac{\int _{I }    e ^ { \kappa x} |v'| ^ p }{  \int _{I } e ^ { \kappa x} |v| ^ { p}} - \frac{ \int _{I } e ^ { \kappa x} (  \psi - \kappa)     v' |v'| ^ { p-2}  v }{ \int _{I } e ^ { \kappa x} |v| ^ { p}} 
 $$
 We observe that the functions $v'$ and $(\psi - \kappa)$ have opposite signs on $I $. Indeed,   on $(0, 1- z)$  we have $v' >0$ and $\psi (x) \leq \kappa = \psi (z)$ (from the log-concavity of $f$); similarly, 
on $(1 - z, 1)$ we have $v' <0$ and $\psi (x) \geq \kappa$.   
 Therefore we have
\begin{equation} \label{f:inspection} 
\begin{aligned}
\mu _ p ( I , f)  & = \frac{\int _{I }    e ^ { \kappa x} |v'| ^ p }{  \int _{I } e ^ { \kappa x} |v| ^ { p}} + \frac{ \int _{I } e ^ { \kappa x}  | \psi - \kappa|     |v'| ^ { p-1}  |v| }{ \int _{I } e ^ { \kappa x} |v| ^ { p}} 
\\ \noalign{\medskip} 
& \geq \big( {\pi _p} \big ) ^ p    + \frac{ \int _{I } e ^ { \kappa x}  | \psi - \kappa|     |v'| ^ { p-1}  |v| }{ \int _{I } e ^ { \kappa x} |v| ^ { p}}\,, 
\end{aligned} 
\end{equation}
where  the last inequality  has
 has been proved for $p \geq 2$ in  \cite{ENT} (see also \cite{FNT12}), 
while for $p \in (1, 2)$ in \cite{FNT12}.

\bigskip
 {\bf Step 2.} Let us prove the following estimates   (where the constant $\caone$ is computed at $(m+1)$)  
 \begin{eqnarray}
& \min \{x , 1 - x \} ^ {m+1}
  \leq  f  (x )  \leq 1  \qquad \forall x \in I\,, & \label{f:sstima3}  \\ 
   \noalign{\medskip} 
& - \frac{(m+1)}{1 -x} \leq f'  (x) \leq \frac{m+1}{x} \qquad \forall x \in I\,, & \label{f:sstima4}  \\ 
\noalign{\medskip}
&  	\norma{  u}_{\infty} \leq    \caone     (\pi _ p ^ p + 1 )^{\frac{ m +1 }{p}} \frac{1}{{\left (\int_{I} 
( \min \{x , 1 - x \} ^ {m+1} ) \right )}^{\frac 1 p}}  =: \cafour  \,,  & \label{f:sstima0} \\
\noalign{\medskip} 
& \min \{ z , 1 - z \}  \geq \frac{1}{ 2 \cafour ^ p } \,, & \label{f:sstima1}
 \\  \noalign{\medskip} 
& x \| u \| _\infty ^ {p-1} + |u ( x) | ^ {p-1} \geq \frac{1}{ 2 \|u \| _\infty} \qquad \forall x \in  I \,, &  \label{f:sstima2} 
\\  \noalign{\medskip} 
& |\kappa|    \leq      {(m+1)} { ( 2  \cafour ^p) ^ { m+2} }  =: \kappa ^*  \,.  &  \label{f:sstima5}  
\end{eqnarray}

The right inequality  in \eqref{f:sstima3} holds by assumption, while 
the left inequality is a straightforward consequence of the $({ \frac{1}{m+1}} )$-concavity of $f $, 
which follows from the fact that  
 $f ^ { \frac{1}{m+1}}$ is the geometric mean of the concave functions $h$ and $\phi ^  {{\frac{1}{m} }}$,
and hence it  is concave. 
Still as a consequence of the power concavity of $f $,   we have 
$$1 \geq f  ^ { \frac{1}{m+1}} ( x) -  f  ^ { \frac{1}{m+1}} ( 0) = \int _0 ^ { x}  
 \big (  f  ^ { \frac{1}{m+1}} \big ) ' \geq    x   \big (  f ^ { \frac{1}{m+1}} \big ) '  (x)   \,, $$ 
yielding  the upper bound for $f'  ( x)$ in \eqref{f:sstima4}.  The lower bound is obtained in the same way, integrating on the sub-interval $(x , 1)$.

To obtain \eqref{f:sstima0}, we recall from Corollary \ref{c:collapse} that 
$$
 	\norma{  u}_{\infty} \leq    \caone   \mu _p ( I , f)^{\frac{ m +1 }{p}} \frac{1}{{\left (\int_{I} 
f \right )}^{\frac 1 p}} \norma{u}_{L^p(\Om ,f )}\,,  
$$ 
 where the constant $\caone$ is computed at $m+1$. 

Then, by using the assumption $\mu _ p ( I , f) \leq    (  {\pi _p} ) ^ p   +1$, the normalization condition on $u$, and the lower bound on $f$ in \eqref{f:sstima3}, we get  
$$
 	\norma{  u}_{\infty} \leq     \caone    (\pi _ p ^ p + 1 )^{\frac{ m +1 }{p}} \frac{1}{{\left (\int_{I} 
( \min \{x , 1 - x \} ^ {m+1} ) \right )}^{\frac 1 p}}  =: \cafour.  
$$ 
To prove   \eqref{f:sstima1} we observe that 
$$ \begin{aligned}2 z \cafour  ^ p   & \geq  2 z \| u  \| _\infty ^ { p } \geq   2  \| u \| _\infty \int _{I}  | u  ^+ | ^ { p-1  } f  \\ &=     \| u  \| _\infty\Big [  \int _{I}  | u  ^+ | ^ { p-1  } f + \int _{I}  | u  ^- | ^ { p-1  } f  \Big ]  \\ &\geq     \int _{I}  | u  ^+ | ^ { p  } f + \int _{I}  | u ^- | ^ { p  } f    = \int _{I } |u | ^ p f = 1  \,,\end{aligned}  $$ 
where the first inequality holds because $(u )_+$  is supported on $[ 0, z]$ and $f  \leq 1$, while the second equality holds because  $\int _{I} f  |u | ^ { p-2} u  = 0$.
 The inequality  $2 (1-z) \cafour  ^ p   \geq 1$ is obtained in analogous way. 

Let us prove \eqref{f:sstima2}.  We observe firstly that
\begin{equation}\label{f:piumeno}
\int _I  |u ^\pm  | ^ { p-1} \geq \frac{1}{2 \| u \| _\infty }\,.
\end{equation}
Indeed, we have 
$$\int _I |u ^+|  ^ {p-2} u ^+ f = \int _I |u ^-|  ^ {p-2} u ^- f  \geq  \frac{1}{ \|u \| _\infty } \int _I  | u ^ -| ^ p f  =   \frac{1}{ \|u \| _\infty } \Big(  1- \int _I  | u ^ +| ^ p f \Big ) \,,$$   
 so that
 $$ \|u \| _\infty  \int _I |u ^+|  ^ {p-2} u ^+ f + \int _I  | u ^ +| ^ p f  \geq 1\,,$$
 which in view of $0 < f \leq 1$, 
implies \eqref{f:piumeno} for $u ^+$ (the case of $u ^-$ is analogous). 

Now, for every $x \in I$, we have

$$ \begin{aligned}
 x \| u \| _\infty ^ {p-1} + |u^ + (x) | ^ {p-1}   &\geq 
x \| u \| _\infty ^ {p-1} + |u^ + (x) | ^ {p-1}  (z-x) 
\\ \noalign{\medskip}
& \geq 
\int_0 ^ x   |   u ^ + (0) |   ^ {p-1} +  \int _ x ^ z   |   u ^ + (x)   |    ^ {p-1} \geq \int  _0 ^ z  |u ^+| ^ {p-1}  \geq \frac{1}{2 \| u \| _\infty }   \,,
\end{aligned} $$ 
where in the last equality we have used \eqref{f:piumeno}. We have thus proved \eqref{f:sstima2} in case $u ( x) = u ^ + (x)$.   The proof in case $u  (x) = u ^ - ( x)$ is analogous.

Finally the estimate \eqref{f:sstima5} readily follows from \eqref{f:sstima3},  \eqref{f:sstima4}, and \eqref{f:sstima1}.

\medskip 
{\bf Step 3.}   
We are going to prove that
\begin{equation} \label{abf.57}
\begin{aligned}  \frac{ \int _{I } e ^ { \kappa x}  | \psi - \kappa|     |v'| ^ { p-1}  |v| }{ \int _{I } e ^ { \kappa x} |v| ^ { p}}  \geq 
\frac{1}{2^ {\frac{p}{p-1}}  }  \frac{e ^ {-2 \kappa^*}    }{  \cafour  ^  {\frac{2p^2-p}{p-1} }}    \min _{[\delta, 1- \delta]} \Big ( \frac{h'}{h} \Big ) ^ 2 
    \cdot
     \Big   [  & \int _\delta ^ z  (z-y)    |(u^+) ' | ^ { p-1}  |u^+|  +  \\ 
     \noalign{\medskip} 
       &  \int _z ^  {1 - \delta}   (y-z)     |(u^-) '  | ^ { p-1}  |u^- |  \Big ]  \,.
       \end{aligned} 
       \end{equation}

    We first 
combine Step 1 and Step 2 to  obtain (from \eqref{f:sstima0} and \eqref{f:sstima2})  
$$ \begin{aligned} \frac{ \int _{I } e ^ { \kappa x}  | \psi - \kappa|     |v'| ^ { p-1}  |v| }{ \int _{I } e ^ { \kappa x} |v| ^ { p}} 
& \geq 
\frac{e ^ {- | \kappa| }  \min \{ u ^ +( 0) , u ^ - ( 1)   \} ^ p  }{e ^ { |\kappa| } \|u \| _\infty ^ p } 
\int _{I }  | \psi - \kappa|     |v'| ^ { p-1}  |v|    \\
\noalign{\medskip} 
& \geq 
\frac{e ^ {-2 \kappa^*}  \big ( \frac{1}{  2  \cafour   } \big ) ^ {\frac{p}{p-1}}   }{   \cafour  ^ p } 
\int _{I }  | \psi - \kappa|     |v'| ^ { p-1}  |v| \\ \noalign{\medskip} 
 & = 
 \frac{1}{2^ {\frac{p}{p-1}}  }  \frac{e ^ {-2 \kappa^*}    }{  \cafour  ^  {\frac{p^2}{p-1}} } 
\int _{I }  | \psi - \kappa|     |v'| ^ { p-1}  |v|
\end{aligned} 
$$

Next we observe that, since 
$\log f  = \log h + \log \phi$, we have   
$$ ( \log f ) ''   = (\log h  ) '' + (\log \phi ) ''  = \frac{h''}{h} - \Big (\frac{h'}{h} \Big ) ^ 2 - ( \log \phi) '' \leq - \Big (\frac{h'}{h} \Big ) ^ 2  \,, $$ 
where the last inequality holds since $h$ is (positive) concave and $\phi$  log-concave. 

Then, for every  $x\in (0, 1 - z)$, we have, for some $\xi _ x \in (z, z+x)$, 

$$\begin{aligned} | \psi(x) - \kappa|     & = - (\psi (x) - \psi (z) )  =  - \big (  (\log f )' (x+z)  - ( \log f) ' ( z) \big )    \\  \noalign{\medskip} 
& =  - x (\log f )  '' ( \xi _ x)  \geq  x  \Big (\frac{h'}{h} \Big ) ^2   (\xi _x) \,.  
\end{aligned} 
$$ 

Similarly, for every $x \in (1 -z, 1)$ we have, for some $\xi _x \in  ( x - 1 + z, z)$,

$$\begin{aligned} | \psi(x) - \kappa|     & =  \psi (x) - \psi (z)   =    (\log f )' (x- 1+z)  - ( \log f) ' ( z)    \\  \noalign{\medskip} 
& =  ( x  -  1) (\log f )  '' ( \xi _ x)  \geq  (1- x)  \Big (\frac{h'}{h} \Big ) ^2   (\xi _x)   \,.
\end{aligned} 
$$  

Combining the two above estimates, we obtain 
$$ \begin{aligned}
&  \int _{I } | \psi - \kappa|     |v'| ^ { p-1}  |v| 
 \geq  \int _{I }   \big [ x \chi _{(0, 1- z )} + ( 1 - x)    \chi _{(1- z, 1)}  \big ]   \big (\frac{h'}{h} \big ) ^2   (\xi _x)    |v'| ^ { p-1}  |v|  
 \\ 
 \medskip
 &   \ge \frac{1}{\|u\|_\infty^p} \Big [\int _0 ^ z  (z-y)  \big (\frac{h'}{h} \big ) ^2   (\xi _{y-z+1} )      |(u^+) ' (y)| ^ { p-1}  |u^+(y)|  
 \\ \medskip
 &
 +   \int _z ^ 1  (y-z)   \big (\frac{h'}{h} \big ) ^2   (\xi _{y-z} )  |(u^-) ' (y) | ^ { p-1}  |u^- (y) |\Big]\,,
 \end{aligned}
 $$   
  with $\xi _{y-z+1}  \in (y, z) $ and   $\xi _{y-z} \in (z, y)$.

  Recall from \eqref{f:sstima1} that $z$ belongs to the interval $\big [ \frac{1}{2   \cafour  ^ p }, 1 -    \frac{1}{2  \cafour ^ p } \big ]$. Hence,
     if   in the above integrals over $[0, z]$ and over $[z, 1]$ we restrict the integration domain respectively to   $\big [ \frac{1}{4  \cafour ^ p }, z \big ]$   and to 
    $\big [ z, 1 -    \frac{1}{4 \cafour ^ p}   \big ]$, we have 
    $$\xi _{y-z+1}\, ,   \xi _{y-z} \in \big [ \frac{1}{4  \cafour  ^ p }, 1 -    \frac{1}{4  \cafour  ^ p } \big ]\,.$$ 
    We conclude that, letting $\delta :=  \frac{1}{4   \cafour ^ p }$,  
 we have the lower bound 
\begin{equation}\label{f:step3}  
\begin{aligned} \int _{I } | \psi - \kappa|     |v'| ^ { p-1}  |v|  \geq   \frac{1}{\gamma^p} \min _{[\delta, 1- \delta]} \Big ( \frac{h'}{h} \Big ) ^ 2 
    \cdot
     \Big   [  & \int _\delta ^ z  (z-y)    |(u^+) ' | ^ { p-1}  |u^+|  +  \\ 
     \noalign{\medskip} 
       &  \int _z ^  {1 - \delta}   (y-z)     |(u^-) '  | ^ { p-1}  |u^- |  \Big ]  \,
       \end{aligned} 
       \end{equation} 
so we get \eqref{abf.57}.
  
    \bigskip 
{\bf Step 4.} We finally  estimate  the two  integrals appearing at the right hand side of \eqref{f:step3}. By change of variables they can be rewritten as 
  $$  \begin{aligned}   & \int _\delta ^ z  (z-y)     |(u^+) ' | ^ { p-1}  |u^+|   = \int_ 0 ^ {z - \delta } x  |(u^+) ' (z-x) | ^ { p-1}  |u^+|  (z-x)
    \\ 
  \noalign{\medskip}    
&       \int _z ^  {1 - \delta}   (y-z)     |(u^-) '  | ^ { p-1}  |u^- |   = \int _0 ^ { 1- \delta - z} x  |(u^-) ' (z+x) | ^ { p-1}  |u^-|  (z+x) 
      \end{aligned} $$ 
    Thus we are in a position to apply Lemma \ref{l:2cases}, in the former case with $a = a_1:= z- \delta$ and $g =g_1(x):=  u ^+ (z-x)$, in the latter case with $a = a_2:=1- \delta -z$ and $g =g_2(x):=  u ^-(z+x)$.  The inequality given by Lemma \ref{l:2cases} holds true respectively with     
    $$M _1 := M  ( p ,a_1,  \| g _ 1 \| _\infty, \| g_ 1 ' \| _p) \qquad \text{ and } \qquad 
    M _2 := M  (p , a_2, \| g _ 2 \| _\infty, \| g_ 2 ' \| _p) \,$$
    being $M (p ,a, \| g   \| _\infty, \| g  ' \| _p )$ given by \eqref{f:M}
    (for  corresponding constants $( b _0 )_1$ and $(b _0) _2$ given by \eqref{f:b0}). 
    
We need to bound from below $M _ 1$ and $M _2$.     
Looking at \eqref{f:M}, this amounts to give a lower bound for $\| g _i \| _\infty$ and for $(b _0 ) _i$, and an upper bound for $\| g ' _ i \| _p $, for $i = 1, 2$. 
    
    We observe that, from \eqref{f:sstima1}  and the choice of $\delta$, we have 
    $$\min \big \{a_1, a_2 \big \}\geq \frac{1}{4 \cafour ^ p }\,.$$ 
    Therefore, the corresponding constants $( b _0 )_1$ and $(b _0) _2$ given by \eqref{f:b0}  satisfy the lower bounds
$$\begin{aligned}
& (b _0)_1:= \min \Big \{ \frac{a_1}{2},  \Big ( \frac{1}{2} \frac {p-1}{p} \frac{\| g_1 \| _\infty} { \| g_1' \| _p}  \Big )  ^ {\frac{p}{p-1}  }  \Big \}
\geq 
\min \Big \{\frac{1}{8 \cafour  ^ p } ,  \Big ( \frac{1}{2} \frac {p-1}{p} \frac{\| g_1 \| _\infty} { \| g_1' \| _p}  \Big )  ^ {\frac{p}{p-1}  }  \Big \}
 \,,
\\ \noalign{\medskip} 
& (b _0)_2:= \min \Big \{ \frac{a_2}{2},  \Big ( \frac{1}{2} \frac {p-1}{p} \frac{\| g_2 \| _\infty} { \| g_2' \| _p}  \Big )  ^ {\frac{p}{p-1}  }  \Big \}
\geq 
\min \Big \{\frac{1}{8 \cafour ^ p } ,  \Big ( \frac{1}{2} \frac {p-1}{p} \frac{\| g_2 \| _\infty} { \| g_2' \| _p}  \Big )  ^ {\frac{p}{p-1}  }  \Big \} \,.
\end{aligned} 
 $$ 
By the estimate \eqref{f:sstima2} in Step 2 applied at $x  = \delta$ and the choice of $\delta$, we have 
$$|u ^ +(\delta) |^ { p-1}  \geq \frac{1}{2 \|u \| _\infty} - \delta \|u \| _\infty ^ { p-1} \geq \frac{1}{ 4 \| u \| _\infty } \,.$$ 
Therefore,  it holds
$$\| g _ 1\| _\infty \geq \frac{1}{(4\cafour) ^ {\frac{1}{p-1}} }\,.$$ 
On the other hand, we have  
$$ \begin{aligned}
\| g ' _ 1 \| _p & = \int_0 ^ { z - \delta}  |u' ( z-x) | ^ p \, dx = \int _ \delta ^ z | u' (y) | ^ p \, dy  \\ \noalign{\medskip}
\smallskip
& \leq \frac{1}{\delta ^ { m+1} } \int _ \delta ^ z  | u' (y) | ^ p  f ( y) \, dy   \leq  \frac{(\pi _p ) ^ p + 1}{\delta ^ { m+1}  }   = ( 4 \cafour  ^ p) ^ { m+1}   \big ( (\pi _p ) ^ p + 1 \big ) \,.
\end{aligned} 
$$   
Since the analogous estimates hold also for $g_2$, we finally have 
$$M _ i  \geq    M  \Big ( p,  \frac{1}{4 \cafour ^ p } , \frac{1}{(4\cafour ) ^ {\frac{1}{p-1}} }\,  ,
 ( 4 \cafour  ^ p) ^ { m+1}   \big ( (\pi _p ) ^ p + 1 \big ) \Big )  \,.$$ 
 Then, recalling \eqref{f:inspection}, \eqref{f:step3}, and \eqref{f:sstima5},  we conclude that the inequality in the statement of the proposition is satisfied with 
$$
\casix:=\frac{1}{4  \cafour ^ p } \qquad \text{ and } \qquad \cafive:= 2   
  \frac{1}{2^ {\frac{p}{p-1}}  }  \frac{e ^ { - 2 \kappa^* }     }{  \cafour  ^  {\frac{2p^2-p}{p-1} }}    M \Big ( p,   \frac{1}{4 \cafour ^ p } ,   \frac{1}{(4\cafour ) 
^ {\frac{1}{p-1}} }\,  ,
 4 \cafour  ^ p) ^ { m+1}   \big ( (\pi _p ) ^ p + 1 \big ) \Big ) \,,$$  
being $M (\cdot, \cdot, \cdot, \cdot)$ defined by \eqref{f:M}, thus leading to \eqref{f:cafive}.
 \qed

 \bigskip\bigskip\bigskip

  \subsection{Modified Payne-Weinberger partitions and rigidity}\label{s:s2} The following definitions are adapted from \cite{ABF24}:  

\begin{definition}
Given an open bounded convex set $\om \subset \R ^N$, a positive weight $\phi \in L ^ 1 (\om)$, and a function $u\in L ^ p  (\om, \phi \, dx)$ satisfying $\int _\om |u| ^ { p-2} u  \phi  = 0$, 
we call:

\medskip
\begin{itemize}

\item[--]  a {\it $\phi$-weighted measure equipartition  of $u$  in $\om$} a family $\mathcal P _n = \{ \om _ 1, \dots, \om _n \}$, where $\om _i$ are mutually disjoint convex sets  such that $\om = \om _1 \cup \dots \cup  \om _n $ and 
$$\int _{\om _i } |u| ^ { p-2} u  \phi  = 0 \qquad \text{ and } \qquad |\om _i|    = \frac{1}{n} |\om|  \qquad \forall i = 1, \dots, n\,. $$

\medskip

\item[--]  a {\it $\phi$-weighted $L^p$ equipartition of $u$  in $\om$} a family $\mathcal P _n = \{ \om _ 1, \dots, \om _n \}$, where $\om _i$ are mutually disjoint convex sets  such that $\om = \om _1 \cup \dots \cup  \om _n $ and 
$$\int _{\om _i }|u| ^ { p-2} u  \phi= 0 \qquad \text{ and } \qquad \int _{\om _i }   | u  |   ^ p  
\phi   = \frac{1}{n} \int _{\om  } |  u |   ^ p  \phi  \qquad \forall i = 1, \dots, n\,. $$

\end{itemize}

\end{definition}

\bigskip
We point out that, as a straightforward variant of Lemma 28 in \cite{ABF24},  if $\overline u$ is  an eigenfunction for $\mu _p (\Om,\phi )$,
$\phi$-weighted $L^p$ equipartitions of $\overline u$ in $\Om$ enjoy the following  mean value property: 
 \begin{equation}\label{f:meanvalue}
 \mu _ p (\Om, \phi ) \geq  \frac{1}{n}  \sum _{i = 1} ^n  \mu _ p (\Om _ i, \phi) \,.
 \end{equation}

\begin{proposition}[rigidity]\label{ambu20}
For any open, bounded,  convex set $\Omega \subset \R^N$, and any positive weight
 $\phi : \Om \to \R_+$ which is $(\frac 1m)$-concave for some  $m \in \N \setminus \{ 0 \}$, it holds
 $$\mu_p(\Om, \phi) > \Big (  \frac{\pi_p}{D_\Om} \Big ) ^ p \,.  $$
\end{proposition}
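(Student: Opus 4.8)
The plan is to argue by contradiction, combining the mean value property \eqref{f:meanvalue} of modified Payne--Weinberger partitions with the sharp one-dimensional lower bound \eqref{f:primobound} proved in Step~1 of Proposition~\ref{p:1d}. After rescaling assume $D_\Om=1$, and suppose $\mu_p(\Om,\phi)=\pi_p^p$. Let $\overline u$ be a first eigenfunction, and for $n$ large fix a $\phi$-weighted $L^p$ equipartition $\{\Om_1,\dots,\Om_n\}$ of $\overline u$ in $\Om$ given by a modified Payne--Weinberger partition, chosen so that all the (open bounded convex) cells $\Om_i\subset\Om$ are thin; in particular $D_{\Om_i}\le 1$. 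By \eqref{f:meanvalue} and the Payne--Weinberger inequality \eqref{paynewein} on each cell,
\[
\pi_p^p=\mu_p(\Om,\phi)\ \ge\ \frac1n\sum_{i=1}^n\mu_p(\Om_i,\phi)\ \ge\ \frac1n\sum_{i=1}^n\Big(\frac{\pi_p}{D_{\Om_i}}\Big)^p\ \ge\ \pi_p^p\,,
\]
so all these inequalities are equalities: $D_{\Om_i}=1$ and $\mu_p(\Om_i,\phi)=\pi_p^p$ for every $i$.

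Fix one thin cell $\om:=\Om_i$ and let $e=q_+-q_-$ realize a diametral segment of $\om$ (so $|e|=D_\om=1$); writing $t=(x-q_-)\cdot e$ we have $\om\subset\{0\le t\le 1\}$ with $q_-$ at $t=0$ and $q_+$ at $t=1$. Let $h(t)$ be the $(N-1)$-dimensional measure of the section of $\om$ at level $t$; by Brunn--Minkowski $h\phi$ is log-concave on $(0,1)$. The crucial geometric remark is that $D_\om=1$ forces $h$ to vanish at the endpoints: if the section at $t=0$ contained some $x_0\ne q_-$, then $x_0-q_-\perp e$, hence $|x_0-q_+|^2=|x_0-q_-|^2+1>1=D_\om^2$, contradicting $x_0,q_+\in\om$; thus $h(0)=0$, and likewise $h(1)=0$. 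Since $\om$ is $N$-dimensional with $N\ge 2$, $h$ is not identically zero, so $h\phi$ is a nonnegative log-concave weight which is not identically zero but vanishes at both endpoints; in particular $\log(h\phi)\to-\infty$ as $t\to 0^+$, so $\log(h\phi)$ is \emph{not} affine.

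Now I would run the one-dimensional Payne--Weinberger reduction on the thin cell $\om$ (as in \cite{ABF24} for $p=2$, and for general $p$ as in the proof of Theorem~\ref{t:Ngap}), obtaining $\pi_p^p=\mu_p(\om,\phi)\ge\mu_p\big((0,1),h\phi\big)$. On the other hand, Step~1 of the proof of Proposition~\ref{p:1d} applied to the log-concave weight $f=h\phi$ — its derivation of \eqref{f:primobound} uses only the log-concavity of $f$, after replacing $f$ by the log-concave truncations $\max\{f,1/k\}$ and letting $k\to\infty$ to handle the vanishing of $f$ at the endpoints — yields $\mu_p((0,1),h\phi)\ge\pi_p^p$, with equality only if the correction term in \eqref{f:primobound} vanishes, i.e. only if $\psi\equiv\kappa$, i.e. only if $\log(h\phi)$ is affine. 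Since $\log(h\phi)$ is not affine, we get $\mu_p((0,1),h\phi)>\pi_p^p$, hence $\mu_p(\om,\phi)>\pi_p^p$, contradicting $\mu_p(\om,\phi)=\pi_p^p$ and proving the proposition.

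\emph{Main obstacle.} The delicate ingredient is the \emph{exact} one-dimensional reduction $\mu_p(\om,\phi)\ge\mu_p((0,1),h\phi)$ on the cells: no error term can be afforded, because as the cells degenerate the one-dimensional gap $\mu_p((0,1),h\phi)-\pi_p^p$ may become arbitrarily small (for instance when $\phi$ is nearly constant along $e$), so rigidity could not be recovered from a mere limiting argument. This exactness is precisely what the "modified" Payne--Weinberger partitions of Section~\ref{s:s2} are built to provide; the remaining steps (propagation of equality and the geometric and spectral facts above) are then routine.
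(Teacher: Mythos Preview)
Your argument has a genuine gap at the step you yourself flag as the ``main obstacle'': the inequality $\mu_p(\om,\phi)\ge\mu_p\big((0,1),h\phi\big)$ is not established, and in fact the \emph{opposite} inequality holds trivially. Writing $\Phi(t)$ for $\int_{\{x\cdot e=t\}}\phi$ (which is what $h\phi$ must mean here), any one-variable test function $u=u(t)$ is admissible for $\mu_p(\om,\phi)$ and its Rayleigh quotient equals the one-dimensional quotient with weight $\Phi$; hence $\mu_p(\om,\phi)\le\mu_p((0,1),\Phi)$. The Payne--Weinberger reduction does \emph{not} give $\mu_p(\om,\phi)\ge\mu_p((0,1),\Phi)$: it partitions $\om$ further into thin subcells and compares the Rayleigh quotient of the eigenfunction on each subcell with a one-dimensional eigenvalue for the profile of that \emph{subcell}, with error terms at every finite stage (cf.\ \eqref{f:formula}). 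Nothing in Section~\ref{s:s2} upgrades this to an exact inequality for the profile of the whole cell $\om$; the ``modified'' partitions only supply the mean value property \eqref{f:meanvalue} and the equidistribution conditions, not an exact one-dimensional reduction. Your nice observation that $h(0)=h(1)=0$ therefore cannot be fed into a one-dimensional strict inequality in the way you propose.

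The paper closes the gap differently. In $N=2$, the fact that $D_{\Om_i}=D_\Om$ for \emph{every} cell of \emph{every} partition forces a global geometric rigidity: $\Om$ must be a circular sector and the cells are nested subsectors (this is the argument from \cite{ABF24}). One then selects a decreasing sequence of subsectors whose opening angles tend to $0$ and applies the collapsing result of Proposition~\ref{abf209}/Remark~\ref{abf210} to obtain a \emph{genuine} one-dimensional limit $\mu_p(S,x\phi)=\pi_p^p$ on a segment $S$, where the weight $x$ is the explicit sector profile. Since $h(x)=x$ is nonconstant, Proposition~\ref{p:1d} (via \eqref{f:1dqbis.1}) gives $\mu_p(S,x\phi)>\pi_p^p$, the desired contradiction. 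For $N\ge3$ a further collapsing reduces to the planar case. Thus the paper avoids the need for any exact one-dimensional reduction on a fixed cell by first extracting geometric rigidity of $\Om$ and then passing to an honest limit.
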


\proof 
For every $\vps >0$, let
 $$
\widetilde{\Omega }_\varepsilon := \Big \{ (x,y) \in \R^{N+m} : \, x \in \Omega ,\,  \norma{y}_{\R^m} <\varepsilon \phi ^{ \frac {1}{m}}(x)\Big \}\,.$$
By Corollary \ref{c:collapse} and Payne-Weinberger inequality for the $p$-Laplacian, 
we have 
$$
\mu_p(\Omega,\phi) = \lim _{\e \to 0 }  \mu_p (\widetilde{\Omega}_\varepsilon) \geq \Big ( \frac{\pi_p}{D_\Om}\Big ) ^ p.
$$

In order to show that the  inequality is strict, we 
argue by contradiction. We assume that  
$ \mu _ p (\Om, \phi ) =  \Big ( \frac{\pi_p}{D_\Om}\Big ) ^ p$, we denote by 
$\overline u$ an eigenfunction for $\mu _p (\Om, \phi)$, and   we 
proceed in two steps.

\smallskip
\noindent{\bf Step 1.} Assume $N=2$, and let 
$\mathcal P _n = \{ \Om _1 , \dots,   \Om _n\}$ be a $\phi$-weighted $L ^p$ equipartition of $\overline u$ in $\Om$.
By the mean value property \eqref{f:meanvalue} we have 
$$\begin{aligned}
\Big ( \frac{\pi_p}{D_\Om}\Big ) ^ p   = \mu _ p (\Om, \phi) \geq \frac{1}{n} \sum _{i = 1} ^n \mu _ p (\Om_i, \phi ) & \geq 
\frac{1}{n} \sum _{i = 1} ^n \Big ( \frac{\pi_p}{D_{\Om_i}}\Big ) ^ p  \\ 
\noalign{\smallskip} 
& \geq
\Big ( \frac{\pi_p}{D_\Om}\Big ) ^ p   + p\pi_p^p   \sum _{i = 1} ^n
\frac{(D _\Om -D_{\Om _i}) }{D  _\Om ^ {p+1}}  \,. 
\end{aligned} $$
by the mean value property \eqref{f:meanvalue} and convexity of $x \to x^{-p}$.

We infer that, for every $i = 1, \dots, n$, it holds  
\begin{equation}\label{f:uguaglianze} D  _\Om = D _{\Om _i} \qquad \text{ and } \qquad  \mu _ p (\Om _i, \phi) = \Big ( \frac{\pi_p}{D_\Om}\Big ) ^ p \,.
\end{equation} 
As in the proof of Theorem 31 in \cite{ABF24}, this implies that $\Om$ is a circular sector, and $\Om _i$ are circular subsectors,  the supremum of whose opening angles is infinitesimal as $n \to + \infty$.
We select a decreasing sequence of such sectors, with opening angles tending to zero, and we pass to the limit. We find, for a  line segment $S$ of length  $D_\Om$, 
$$\mu _p (S, x \phi) = \Big ( \frac{\pi_p}{D_\Om}\Big ) ^ p  \,,$$ 
in contradiction with Proposition \ref{p:1d}  (see also Remark \ref{r:pip}). 

\smallskip
\noindent{\bf Step 2.}   Assume $N \geq 3$, and let  $\mathcal P _n = \{ \Om _1 , \dots,   \Om _n\}$  be
 a $\phi$-weighted $L ^p$ equipartition of $\overline u$ in $\Om$, 
whose cells are arbitrarily narrow in $(N-2)$ orthogonal directions.  In the limit as $n \to + \infty$, any sequence of cells  $\Om _{k (n) }  \in \mathcal P _n$ 
 converges, up to a subsequence, to a degenerate convex set,  which may be 
either one-dimensional or two-dimensional. Actually, since the equalities
\eqref{f:uguaglianze} are still satisfied, 
 any  limit set has diameter $D_\Om$,  and this readily leads to exclude by a geometric contradiction argument 
 the case when all the limit sets are one-dimensional. Hence,  we can select a sequence $\Om _{k (n )} \in \mathcal P _n$ converging to  a two-dimensional convex set $\Om _0$ which satisfies, for a suitable $(\frac{1}{N-2})$-concave function $h$, the equality
$$
\mu _ p (\Om_0,h  \phi  ) =  \Big ( \frac{\pi_p}{D_\Om}\Big ) ^ p  \,, 
$$ in contradiction with Step 1. 
\qed

\bigskip 
\subsection{Area control for the cells}\label{s:s3} In this section we prove that, given a 
$\phi$-weighted $L^p$ equipartition  of  a first eigenfunction for $\mu _ p (\Om, \phi)$, 
we are able to control the Lebesgue measure of the cells. This  will be a crucial point in order to conclude the proof of Theorem \ref{t:Ngap}.   
As a preliminary ingredient, we need to establish a nonlinear Kroger-type inequality.   Results in this spirit have been recently proved in \cite{Henrot-Michetti}.    
\begin{lemma}\label{l:kroger} 
	Let $\Omega$ be an 
	open bounded convex domain $\Omega$ in $\R ^N$ with diameter $D_\Om$  	and 
$\phi : \Om \to (0,+\infty)$ be a  $(\frac {1}{m})$-concave function  on $\Om$. Then there holds 
		\begin{equation}
		\mu_p(\Omega, \phi) \leq \frac{\cakroger(p, N+m)} {D_\Omega^p},
	\end{equation}
where
\begin{equation}\label{f:cakroger}
\cakroger = \cakroger(p, N):=     (p+1)2^{\frac{N+  3 p}{2}}     N ^ { N+p}   \,.  
\end{equation} 
\end{lemma}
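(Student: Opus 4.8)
The plan is to construct an explicit test function supported on a thin cylinder inside $\Om$ and plug it into the variational characterization \eqref{mu2} of $\mu_p(\Om,\phi)$, following the classical Kr\"oger strategy for upper bounds on Neumann eigenvalues. First I would realize the thinness of $\Om$: since $\Om$ is convex with diameter $D_\Om$, there are two points $x_0,x_1\in\overline\Om$ with $|x_1-x_0|=D_\Om$; after a rigid motion we may assume the segment $[x_0,x_1]$ lies along the first coordinate axis, with $\Om\subset\{0\le x_1\le D_\Om\}$. Write $x=(t,y)$ with $t\in(0,D_\Om)$ and $y\in\R^{N-1}$, and for $t\in(0,D_\Om)$ let $\sigma(t)$ be the $(N-1)$-dimensional slice $\{y:(t,y)\in\Om\}$ with $(N-1)$-measure $h(t):=\mathcal H^{N-1}(\sigma(t))$. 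By convexity of $\Om$, $h^{1/(N-1)}$ is concave on $(0,D_\Om)$, and by the $(\tfrac1m)$-concavity of $\phi$ the function $g(t):=\int_{\sigma(t)}\phi(t,y)\,dy$ satisfies that $g^{1/(N-1+m)}$ is concave (by Brunn--Minkowski / Borell, exactly the mechanism used in the proof of Proposition \ref{abf209}); in particular $g$ is log-concave on $(0,D_\Om)$.

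Next I would reduce to a one-dimensional problem: take a test function of the form $u(t,y)=w(t)$ depending only on the axial variable, with $\int_0^{D_\Om} |w|^{p-2}w\, g=0$ so that $u$ is admissible in \eqref{mu2}. Then the Rayleigh quotient of $u$ equals the one-dimensional weighted Rayleigh quotient $\int_0^{D_\Om}|w'|^p g\big/\int_0^{D_\Om}|w|^p g$, whose infimum over admissible $w$ is $\mu_p(I_{D_\Om},g)$. Thus $\mu_p(\Om,\phi)\le\mu_p(I_{D_\Om},g)$. It therefore suffices to bound $\mu_p(I_{D_\Om},g)\le \cakroger(p,N+m)/D_\Om^p$ for any positive log-concave (indeed $(\tfrac1{N-1+m})$-concave, but log-concavity is all we need) weight $g$ on an interval of length $D_\Om$. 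By scaling it is enough to treat $I=(0,1)$ and show $\mu_p(I,g)\le \cakroger(p,N+m)$ for any positive log-concave $g$ on $(0,1)$; here I expect the cleanest route is to build an explicit piecewise-defined test function. One natural choice: split $(0,1)$ at the point $\xi$ where $\int_0^\xi |v|^{p-2}v\,g$ can be made to vanish for $v$ affine and increasing-then-decreasing, e.g. take $v(t)=t-\xi$ where $\xi$ is chosen (using monotonicity/continuity) so that the orthogonality constraint $\int_0^1 |t-\xi|^{p-2}(t-\xi)\,g(t)\,dt=0$ holds — such a $\xi$ exists in $(0,1)$ because the integral is continuous in $\xi$ and changes sign between $0$ and $1$. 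Then $\mu_p(I,g)\le \int_0^1 g\big/\int_0^1 |t-\xi|^p g$, and it remains to bound the denominator from below in terms of $\int_0^1 g$ using log-concavity of $g$ (a log-concave density cannot concentrate too strongly near a single point $\xi$, so $\int_0^1|t-\xi|^p g\ge c_{p}\int_0^1 g$ for an explicit $c_p$). Tracking the constants from this last step, together with the passage from $N$ to $N+m$ in the statement, produces the claimed form \eqref{f:cakroger}; the precise numerical constant is obtained by being generous in the estimates, exactly as elsewhere in the paper.

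The main obstacle I anticipate is the explicit lower bound on $\int_0^1 |t-\xi|^p g(t)\,dt$ in terms of $\int_0^1 g$ with a constant that depends only on $p$ and scales correctly so as to give the $2^{(N+3p)/2}N^{N+p}$ shape after replacing $N$ by $N+m$ — one must control how badly a log-concave weight on a unit interval can be peaked at the (constraint-determined, hence not free) point $\xi$, and make sure the argument does not secretly use more than log-concavity. A convenient device is to note that a log-concave $g$ on $(0,1)$ satisfies $g(t)\ge g(\xi)\,e^{-C|t-\xi|}$ locally is false in general, so instead one should use the reverse: restrict attention to the sub-interval of length $\ge \tfrac12$ on one side of $\xi$, and on that side use that $\min g$ over a further sub-interval is comparable to $\int g$ via the one-dimensional log-concave ``concentration'' estimate $\|g\|_\infty\le \tfrac{C}{|J|}\int_J g$ together with log-concavity to transfer mass away from $\xi$. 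Alternatively — and this may be simpler and is likely what the authors do — one can invoke the one-dimensional Payne--Weinberger lower bound $\mu_p(I_d,g)\ge(\pi_p/d)^p$ only as orientation and instead directly use a two-bump test function adapted to the first eigenfunction of the unweighted interval, estimating the weighted integrals by comparing $g$ to its values at finitely many points via log-concavity; the bookkeeping of constants is routine once the structural inequality is in place. Either way, the dimensional factor $N+m$ enters only through the concavity exponent $\tfrac1{N-1+m}$ of $g$ (which we then relax to log-concavity), so the dependence on $N+m$ in \eqref{f:cakroger} is an artifact of how generously the final constant is bounded, and I would close by simply verifying that the constant obtained is $\le (p+1)2^{(N+m+3p)/2}(N+m)^{N+m+p}$, i.e.\ $\cakroger(p,N+m)$.
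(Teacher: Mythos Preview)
Your reduction to the one-dimensional problem $\mu_p(\Om,\phi)\le\mu_p(I_{D_\Om},g)$ with $g(t)=\int_{\sigma(t)}\phi$ is correct, and the test function $v(t)=t-\xi$ is a natural choice. The genuine gap is your claim that ``log-concavity is all we need'' for the lower bound on $\int_0^1|t-\xi|^p g$. This is false: log-concave weights on a fixed interval can have arbitrarily large first weighted Neumann eigenvalue. Take $g_n(t)=e^{-n^2(t-1/2)^2}$ on $(0,1)$; by symmetry $\xi=1/2$, and a rescaling $s=n(t-1/2)$ shows $\mu_p((0,1),g_n)\sim c_p\,n^p\to\infty$. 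So no universal constant can be extracted from log-concavity alone, and your assertion that the $N+m$ dependence in $\cakroger$ is ``an artifact of how generously the final constant is bounded'' is incorrect: that dependence is essential and enters precisely through the power-concavity exponent $\tfrac{1}{N-1+m}$ of $g$, which prevents concentration via an estimate of the type $g(t)\ge\|g\|_\infty\min\{t,1-t\}^{N-1+m}$ (cf.\ \eqref{f:sstima3}). Your argument can be repaired by using this pointwise lower bound on $g$ to estimate the denominator, but you must then track the resulting $(N+m)$-dependent constant honestly rather than treat it as slack.

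The paper takes a different and cleaner route that sidesteps this issue entirely. It first reduces the weighted problem to an \emph{unweighted} one in $\R^{N+m}$ by the collapsing argument (building $\widetilde\Om_\varepsilon\subset\R^{N+m}$ from the graph of $\phi^{1/m}$, exactly as in Proposition~\ref{abf209}), so that it suffices to bound $\mu_p(\Om)$ for a convex $\Om\subset\R^N$. Then, instead of slicing along a diameter, it uses the John ellipsoid $\mathcal E\subset\Om$: with the linear test function $u(x)=x_1-c$, one gets $\mu_p(\Om)\le|\Om|\big/\min_c\int_\Om|x_1-c|^p$, and since $\mathcal E$ is centred and symmetric, $\min_c\int_\Om|x_1-c|^p\ge\int_{\mathcal E}|x_1|^p$, which is computed explicitly and bounded below using $|\mathcal E|\ge|\Om|/N^N$ and $a_1\ge D_\Om/(2N)$. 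The John ellipsoid thus supplies, for free, both the symmetry that pins down $\xi$ and the anti-concentration that your log-concavity argument cannot provide.
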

\begin{proof}
By a collapsing argument, it is enough to prove that, for $\Om$ as in the statement, it holds 
	$$
		\mu_p(\Omega) \leq \frac{\cakroger(p, N)} {D_\Omega^p}.
$$
Moreover, up to a translation, we can assume that the John ellipsoid $\mathcal E$ of $\Om$ is centred at the origin.  
Writing $x \in \R ^N$ as $x= (x_1, x' ) \in \R \times \R ^ {N-1}$,   choosing $c$ such that $\int_\Omega  |x_1-c | ^ { p-2} (x_1-c) =0$, as 
and taking $u ( x) = x_1-c$ 
a test function in 
the variational characterization of $\mu_p(\Omega)$,  we have 
			$$ \mu_p(\Omega)=\min_{\substack{u\in W^{1, p}(\Om)\sm \{0\}\\ \int_\Omega  |u | ^ { p-2} u =0}} \frac{\displaystyle{\int_\Omega \abs{\nabla u}^p}}{\displaystyle{\int_\Omega |u |^p}} \leq\frac{\displaystyle{\int_\Omega \abs{\nabla x_1}^p}}{\displaystyle{\int_\Omega |x_1-c |^p}}= \frac{\abs{\Omega}}{\displaystyle{\int_\Omega |x_1-c |^p}}\,. $$

Hence, 
$$
	\mu_p(\Omega)\leq  \frac{|\Om|} { \min_{c \in \R}  \int_\Omega |x_1-c |^p  }\,.
 $$

Since $\mathcal E$ is contained into $\Om$ and it is centred at the origin, we have   
$$
	\label{elipsoid_int}
	\min_{c \in \R} 	\int_\Omega \abs{x_1-c}^p \geq \min_{c \in \R} \int_{\mathcal E}  \abs{x_1-c}^p  = \int_{\mathcal E}  \abs{x_1}^p \,.
$$

Some explicit computations give 
$$
\begin{aligned}
		 \int_{\mathcal E}  \abs{x_1}^p  \, dx &   = \int_{-a_1}^{a_1} \abs{x_1}^p   \mathcal{H}^{N-1} \big ( \{ x'\ :  (x_1 , x') \in \mathcal E \} \big ) \, dx_1    \\
		 &=     \int_{-a_1}^{a_1} \abs{x_1}^p   \om_{N-1}   a_2 \dots a_N \left(1- \frac{x_1^2}{a_1^2}\right)^{\frac{N-1}{2}}\\
		 &\geq   \int_{-\frac{a_1}{\sqrt{2}}}^{\frac{a_1}{\sqrt{2}}} \abs{x_1}^p     \om_{N-1}    a_2 \dots a_N \left(1- \frac{x_1^2}{a_1^2}\right)^{\frac{N-1}{2}}\\
			 &\geq  \frac{1}{2^{\frac{N-1}{2}}}   \om_{N-1}    a_2 \dots a_N \frac{2}{p+1} \left(\frac{a_1}{\sqrt{2}}\right)^{p+1}\\
		 &=\frac{1}{(p+1)2^{\frac{N+p-2}{2}}}    \om_{N-1}     a_2 \dots a_N  (a_1)^{p+1}\\
		 &=    \frac{1}{(p+1)2^{\frac{N+p-2}{2}}}    \frac{ \om_{N-1}   }{ \om_{N}    }       \abs{\mathcal E } (a_1)^{p}\\
		 & \geq    \frac{1}{(p+1)2^{\frac{N+   3  p-2}{2}}}   \frac{ \om_{N-1}   }{ \om_{N}    }   \frac{\abs{\Omega}}{N^N} \frac{D_\Omega^p}{ N^p}.
\end{aligned}
$$ 
We conclude that
$$
	\mu_p(\Omega) \leq     
	(p+1)2^{\frac{N+  3 p-2}{2}}  \frac{ \om_{N}   }{ \om_{N-1}    }    N ^ { N+p}  \frac{1}{D _\Om ^p} 
	\leq      
	(p+1)2^{\frac{N+  3 p}{2}}     N ^ { N+p}  \frac{1}{D _\Om ^p}. 
	$$
\end{proof}

\begin{proposition}\label{p:proparea}    Let  
\begin{equation}\label{f:caseven} 
 \caseven = \caseven (p, N+m) :=      \frac{ 1}{2^{N+m}}   \frac{1}{  \caone  ^ p   \cdot \ka _{kroger}  ^ {N+m} }  
\end{equation} 
  where $\caone$ is the constant 
defined by \eqref{f:caone} computed at $N+m$,  and  $\ka _{kroger}$ is the constant defined by \eqref{f:cakroger} computed at $(p, N+m)$.   

Then, if $\Om \subset \R^N$ is an open bounded convex set, 
$\phi$ is a positive $\big ( \frac 1m\big )$-concave weight in $L ^ 1 (\Om)$ and  $\overline u$ is a first eigenfunction for $\mu _ p (\Om, \phi)$ normalized in $L ^ p (\Om, \phi)$,  
any  cell $\om$ of a $\phi$-weighted $L^p$ equipartition of $\overline u$ in $\Om$ composed by $n$ cells satisfies
	\begin{equation*}
		|\omega| \geq \caseven \frac{|\Omega|}{n}.\;\;\;
	\end{equation*}
\end{proposition}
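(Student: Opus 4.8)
The plan is to deduce the area bound from three facts: the crude pointwise estimate $\int_\omega |\overline u|^p\phi \le \|\overline u\|_{L^\infty(\Omega)}^p\,\|\phi\|_{L^\infty(\Omega)}\,|\omega|$, the $L^\infty$ bound of Corollary \ref{c:collapse}, and the Kroger-type upper bound of Lemma \ref{l:kroger}. The one ingredient not yet available is an elementary lower bound for $\int_\Omega\phi$ in terms of $\|\phi\|_{L^\infty(\Omega)}\,|\Omega|$, which I expect to be the heart of the matter; everything else is assembly.

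\textbf{Assembly.} First I would note that, since $\phi^{1/m}$ is concave on the open bounded convex set $\Omega$, it is bounded above there, so $\phi\in L^\infty(\Omega)$; set $M:=\|\phi\|_{L^\infty(\Omega)}$. Let $\omega$ be a cell of the given $\phi$-weighted $L^p$ equipartition of $\overline u$ into $n$ cells. Since $\overline u$ is normalized in $L^p(\Omega,\phi)$, the defining property of the equipartition gives $\int_\omega|\overline u|^p\phi=\frac1n$, whence
\[
\tfrac1n=\int_\omega|\overline u|^p\phi\ \le\ \|\overline u\|_{L^\infty(\Omega)}^p\,M\,|\omega|,\qquad\text{i.e.}\qquad |\omega|\ \ge\ \frac{1}{n\,M\,\|\overline u\|_{L^\infty(\Omega)}^p}.
\]
Next I would raise the inequality of Corollary \ref{c:collapse} to the $p$-th power and use $\|\overline u\|_{L^p(\Omega,\phi)}=1$ to get $\|\overline u\|_{L^\infty(\Omega)}^p\le \caone^p\,\mu_p(\Omega,\phi)^{N+m}\,D_\Omega^{p(N+m)}\,\big(\int_\Omega\phi\big)^{-1}$, and then invoke Lemma \ref{l:kroger} in the form $\mu_p(\Omega,\phi)^{N+m}D_\Omega^{p(N+m)}\le \ka_{kroger}^{N+m}$. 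Combining,
\[
|\omega|\ \ge\ \frac{\int_\Omega\phi}{n\,M\,\caone^p\,\ka_{kroger}^{N+m}}.
\]
Thus it only remains to prove $\int_\Omega\phi\ \ge\ 2^{-(N+m)}\,M\,|\Omega|$, after which the right-hand side becomes exactly $\caseven\,|\Omega|/n$ with $\caseven$ as in \eqref{f:caseven}.

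\textbf{The key lemma: integral lower bound for a power-concave weight.} I would prove $\int_\Omega\phi\ge 2^{-(N+m)}M|\Omega|$ by a cone comparison from a near-maximum point. Fix $x_0\in\Omega$ with $\phi^{1/m}(x_0)$ close to $M^{1/m}$, and for a unit vector $\theta$ let $R(\theta)>0$ be the length of the ray from $x_0$ in direction $\theta$ that stays in $\Omega$. Concavity of $\phi^{1/m}$ along this ray together with $\phi>0$ gives $\phi^{1/m}(x_0+r\theta)\ge (1-r/R(\theta))\,\phi^{1/m}(x_0)$ for $0\le r<R(\theta)$, hence $\phi(x_0+r\theta)\ge (1-r/R(\theta))^m\,\phi(x_0)$. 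Integrating in polar coordinates centred at $x_0$, using $\int_0^{R}(1-r/R)^m r^{N-1}\,dr=R^N\,\frac{(N-1)!\,m!}{(N+m)!}$ and $\frac1N\int_{S^{N-1}}R(\theta)^N\,d\theta=|\Omega|$, one obtains $\int_\Omega\phi\ \ge\ \phi(x_0)\,\binom{N+m}{N}^{-1}|\Omega|$; letting $\phi(x_0)\uparrow M$ and using $\binom{N+m}{N}\le 2^{N+m}$ gives the claim. The main obstacle here is purely a matter of care: the supremum of $\phi$ need not be attained in the open set $\Omega$, so one works with a near-maximal $x_0$ and passes to the limit, and one must check $R(\theta)$ is well defined and positive for every $\theta$ (true since $x_0$ is interior and $\Omega$ bounded); the remaining polar-coordinate computation is routine.
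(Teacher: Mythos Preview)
Your proof is correct and follows essentially the same route as the paper: combine the $L^\infty$ bound from Corollary \ref{c:collapse}, the Kroger-type upper bound from Lemma \ref{l:kroger}, and a lower bound $\int_\Omega\phi\ge 2^{-(N+m)}\|\phi\|_{L^\infty}|\Omega|$. The only difference is in this last step: the paper simply observes that, centring at a maximum point of $\phi$, concavity of $\phi^{1/m}$ gives $\phi\ge 2^{-m}\|\phi\|_{L^\infty}$ on $\tfrac12\Omega$, hence $\int_\Omega\phi\ge 2^{-m}\|\phi\|_{L^\infty}\cdot 2^{-N}|\Omega|$; your polar-coordinate argument is a valid (and slightly sharper, yielding $\binom{N+m}{N}^{-1}$) alternative.
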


\proof  By Corollary \ref{c:collapse}, we have
	\begin{equation*}
		 \frac{1}{n\int _{\omega}\phi }= \frac{\int_{\omega} \phi \overline u^p }{\int _{\omega} \phi } \leq \norma{\overline u}^p_{L^\infty (\Om)} \leq     \caone ^p     
		 \mu_{  p  } (\Omega,\phi )^{N+m} \frac{D_\Om ^{p(N+m)}}{  \int_\Om \phi}.
	\end{equation*}

By Lemma \ref{l:kroger},  we have that
$$\mu_{  p  } (\Om,\phi ) ^ { N+m} D_\Om ^{p(N+m)} \leq \ka _{kroger} ^  {N+m} \,, $$

which leads to 
$$\int _{\omega}\phi \geq   \frac{1}{  \caone  ^ p   \cdot \ka _{kroger}  ^ {N+m} }  \frac{1}{n} \int _\Om \phi\,.$$

Now we observe that, since $\phi$ is $\big ( \frac 1m)$-concave, it belongs to 
$L^\infty(\Om)$ and, if  we assume with no loss of generality that $0 \in \overline \Om$ is a maximum point for $\phi$, it holds 
$\phi (x) \ge  \frac {1}{2^m} \|\phi\|_{L^\infty(\Om)}$ for every  $x \in \frac 12 \Om $. We infer that 
	
	$$\abs{\omega} \|\phi \|_{L^\infty(\Om)} \ge \int _{\omega} \phi  \ge  
	 \frac{1}{  \caone  ^ p   \cdot \ka _{kroger}  ^ {N+m} } \frac{1}{n}  \int _{\Omega} \phi  \ge    \frac{1}{  \caone  ^ p   \cdot \ka _{kroger}  ^ {N+m} }  \frac{1}{n}  \frac{ 1}{2^{N+m}}  \|\phi \|_{L^\infty(\Om)} \abs{\Om}\,.$$

 \qed

\bigskip 

\subsection{Proof of Theorem \ref{t:Ngap} in dimension $N = 2$}\label{s:s4}     It is not restrictive to prove the statement for $D _\Om = 1$. 
Moreover, we can  fix a system of coordinates such that a diameter is the horizontal segment $[0, 1] \times \{ 0 \}$.   
Since we are in dimension $N=2$,  $a_2$ is comparable to the width $w_\Om$ of $\Om$, or equivalently to its depth $\eta_\Om$ in vertical direction (namely the maximum of the lengths of vertical sections of $\Om$).

We split the proof of Theorem \ref{t:Ngap} in two steps: first, we prove the quantitative inequality for narrow sets, and then 
for arbitrary sets. 
   This is done respectively in the two lemmas hereafter, which  are proved separately below.

	\begin{lemma} \label{l:lemma1} 	  Let $N = 2$, and  let $\Om$  and $\phi$ as in the assumptions of Theorem \ref{t:Ngap}. Assume in addition that $D_\Om =1$ and that $\eta _\Om \leq \eta _0$, with 
 
	\begin{equation}\label{f:eta0} 
	 \eta _0 =  \frac{1}{2} \sqrt{ \frac{2 \cafive}{3} }    \,.
	 \end{equation}    
		 Then it holds
\begin{equation}
		\label{f:formula11}
		\mu_p (\Omega, \phi)\ge  ({\pi_p}) ^ p   +      \frac{\caseven ^2 }{ (8 \cdot  256) ^ 2}      \eta _\Om ^2 \,.
	\end{equation}

		   \end{lemma}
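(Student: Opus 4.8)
The strategy is the classical Payne--Weinberger slicing scheme, now upgraded to extract the extra term from the one-dimensional refinement in Proposition~\ref{p:1d}. Since $D_\Om=1$ and a diameter is $[0,1]\times\{0\}$, I would start from a first eigenfunction $\overline u$ for $\mu_p(\Om,\phi)$ and, exactly as in \cite{ABF24}, bisect $\Om$ repeatedly by hyperplanes perpendicular to the diameter direction into a $\phi$-weighted $L^p$ equipartition $\mathcal P_n=\{\Om_1,\dots,\Om_n\}$ whose cells are thin slabs in the horizontal direction. By the mean value property \eqref{f:meanvalue},
\[
\mu_p(\Om,\phi)\ \geq\ \frac1n\sum_{i=1}^n\mu_p(\Om_i,\phi).
\]
On each cell $\Om_i$, after a further (standard) one-dimensional reduction of Payne--Weinberger type we bound $\mu_p(\Om_i,\phi)$ from below by a weighted one-dimensional Neumann eigenvalue $\mu_p(I_{d_i},h_i\phi_i)$ on a segment $I_{d_i}\subset\Om$ of length $d_i\le1$, where $h_i$ is the (concave) function giving the length of the vertical section of $\Om_i$ and $\phi_i$ is the (power-concave) restriction of $\phi$ along the segment; here $h_i\phi_i$ is log-concave so Remark~\ref{r:pip} applies. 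Then I invoke Proposition~\ref{p:1d} in the rescaled form \eqref{f:1dqbis.1}: if $\mu_p(I_{d_i},h_i\phi_i)\le(\pi_p)^p+1$ — which may be assumed, otherwise there is nothing to prove — then
\[
\mu_p(I_{d_i},h_i\phi_i)\ \geq\ \mu_1(I_{d_i},h_i\phi_i)\ \geq\ (\pi_p)^p+\cafive\,\tfrac23\,\min_{[\casix,\,d_i-\casix]}\Big(\tfrac{h_i'}{h_i}\Big)^2.
\]

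The heart of the argument is then to convert $\min(h_i'/h_i)^2$ into a genuine lower bound proportional to $\eta_\Om^2$. For a thin slab cell $\Om_i$, the section height $h_i$ is a concave function on $I_{d_i}$ which ranges from roughly $0$ near the endpoints of the diameter up to a value comparable to $\eta_{\Om_i}$, and the total measure constraint $|\Om_i|=|\Om|/n$ fixes $\int h_i$. The point is that on an interior subinterval $[\casix,d_i-\casix]$ the concave function $h_i$ cannot be too flat: its slope must be at least of order $\big(\max h_i\big)/1\gtrsim\eta_{\Om_i}$ somewhere, and combined with the area control $|\Om_i|\ge\caseven|\Om|/n$ from Proposition~\ref{p:proparea} one gets that $\max h_i$, hence $h_i$ itself on a definite portion of the interval, is comparable to $\eta_\Om$; dividing by $h_i\le\eta_\Om$ gives $\min(h_i'/h_i)^2\gtrsim\text{const}$, but more carefully one keeps track so that the final constant is $\eta_\Om^2$-homogeneous and of the stated size. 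Averaging over $i$ and using that each summand exceeds $(\pi_p)^p$ plus this positive quantity yields
\[
\mu_p(\Om,\phi)\ \geq\ (\pi_p)^p+\frac{\caseven^2}{(8\cdot256)^2}\,\eta_\Om^2.
\]

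The smallness hypothesis $\eta_\Om\le\eta_0=\tfrac12\sqrt{2\cafive/3}$ enters precisely to guarantee that the extra term produced does not exceed $1$, so that the dichotomy "either $\mu_p(I_{d_i},h_i\phi_i)>(\pi_p)^p+1$ (done) or \eqref{f:1dqbis.1} applies" is consistent and the bound \eqref{f:1dqbis.1} is the operative one on every cell; it also ensures $\casix<d_i-\casix$ so the interior interval is nonempty after choosing $n$ large. I expect the main obstacle to be the geometric bookkeeping in the previous paragraph: quantifying, uniformly in $n$ and in the cell, how the area normalization $|\Om_i|\ge\caseven|\Om|/n$ together with concavity of $h_i$ and the fixed diameter forces $\min_{[\casix,\,d_i-\casix]}(h_i'/h_i)^2$ to be bounded below by a constant multiple of $\eta_\Om^2$ — making the constants line up to give exactly $\caseven^2/(8\cdot256)^2$ — and in passing to the $n\to\infty$ limit to remove the dependence on the partition, arguing as in \cite{ABF24} that the slicing can be performed so the residual lower-order losses vanish.
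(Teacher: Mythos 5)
Your opening steps match the paper's scaffolding: take a $\phi$-weighted $L^p$ equipartition, use the mean value property \eqref{f:meanvalue}, reduce each cell to a one-dimensional weighted problem and invoke Proposition~\ref{p:1d} via \eqref{f:1dqbis.1}. That part is right. But the ``heart of the argument'' as you describe it contains a genuine gap, and it is precisely the point where all the work in the paper's proof is concentrated.

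You claim that on each cell one can extract $\min_{[\casix,\,d_i-\casix]}(h_i'/h_i)^2\gtrsim \eta_\Om^2$ (or even $\gtrsim$ const) from the concavity of $h_i$ together with the area control of Proposition~\ref{p:proparea}. This cannot be true as a pointwise statement over every cell. A concave profile $h_i$ that attains an interior maximum has $h_i'=0$ there, so the minimum of $(h_i'/h_i)^2$ over an interior interval is zero whenever that interval contains the maximum; and cells sitting in the middle of $\Om$ will typically have a nearly flat section profile. The dimensional analysis in your paragraph is also off: $(h_i'/h_i)^2$ scales like an inverse length squared and is $O(1)$ for $d_i\approx1$ regardless of $\eta_\Om$, so there is no mechanism to make it ``$\eta_\Om^2$-homogeneous.'' The area bound $|\Om_i|\geq\caseven|\Om|/n$ controls the cell's thickness from below but says nothing about the slope of $h_i$, which is what you need.

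What the paper actually does is organize a dichotomy: it splits the cells into sub-families (Cases 1, 2.1, 2.2.1, 2.2.2.1, 2.2.2.2) according to their diameters, the presence of vertices in an inner strip, the size of the cell Rayleigh quotient, and finally the size of $\min(h_i'/h_i)^2$. In the favourable cases (Cases 1, 2.1, 2.2.1, 2.2.2.1) a positive proportion of cells directly supplies an excess $\delta$, and \eqref{f:pezzetto} closes the argument. The hard case, Case 2.2.2.2, is precisely the one your proposal would have to deal with: a positive proportion of cells with $\min(h_i'/h_i)^2\leq c_0\eta^2$. The paper does \emph{not} get a lower bound there; instead it shows this case cannot happen, by a geometric contradiction. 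The bound $(2/3)\cafive\min(h_i'/h_i)^2\leq c_0\eta^2$ implies $h_i^{\min}/h_i^{\max}\geq1/2$ on the relevant interval (this is where the hypothesis $\eta_\Om\leq\eta_0$ with $\eta_0^2\leq\cafive/6$ enters), hence via Thales a uniform lower bound on the section lengths $L_i^{\min}$; feeding in the area control from Proposition~\ref{p:proparea} gives $L_i^{\min}\gtrsim\caseven\eta/n$, and then summing over the $\geq n/32$ cells in this family and applying the generalized Pythagorean inequality to the quadrilateral $A_1B_1A_2B_2$ produces a diagonal of length strictly greater than $1=D_\Om$, a contradiction, provided $c_0<(\caseven/256)^2$. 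That contradiction, not a direct lower bound on $\min(h_i'/h_i)^2$, is what makes the proof go through. Without this case analysis and without the quadrilateral/Pythagorean argument, your plan breaks down at the key step.
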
    
		   
		  \begin{lemma} \label{l:lemma2}  
		    Let $N = 2$, and  let $\Om$  and $\phi$ as in the assumptions of Theorem \ref{t:Ngap}. Assume in addition that $D_\Om =1$. Then it holds 
\begin{equation}	\label{f:formula22}
\mu_p (\Omega, \phi)\ge (\pi _p ) ^ p + 
    \frac{ (\caseven) ^ 2  \eta _0 ^ 2 }{(7 \cdot 16 \cdot 256  )^2}   
   \eta _\Om ^ 2    \,,
	\end{equation}  	  
	where  $\eta_0$ is given by \eqref{f:eta0}. 
		\end{lemma}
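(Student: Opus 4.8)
\emph{Overall strategy.} I would argue by a dichotomy on the size of $\eta_\Om$. If $\eta_\Om\le\eta_0$ we are already in the range covered by Lemma \ref{l:lemma1}, which in fact gives the \emph{stronger} estimate $\mu_p(\Om,\phi)\ge(\pi_p)^p+\frac{(\caseven)^2}{(8\cdot 256)^2}\eta_\Om^2$; since $\eta_0<14$ (indeed $\eta_0$ is a very small positive number, by \eqref{f:cafive} and \eqref{f:eta0}) one has $\frac{(\caseven)^2}{(8\cdot 256)^2}\ge\frac{(\caseven)^2\eta_0^2}{(7\cdot 16\cdot 256)^2}$, so \eqref{f:formula22} follows immediately. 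It then remains to treat the \emph{thick case} $\eta_\Om>\eta_0$; and since $\eta_\Om\le D_\Om=1$, in that case it suffices to prove the purely numerical bound $\mu_p(\Om,\phi)\ge(\pi_p)^p+\frac{(\caseven)^2\eta_0^2}{(7\cdot 16\cdot 256)^2}$, after which \eqref{f:formula22} is recovered at once using $\eta_\Om^2\le1$.

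\emph{The thick case: partition and mean value.} Fix a first eigenfunction $\overline u$ for $\mu_p(\Om,\phi)$, normalized in $L^p(\Om,\phi)$, and take a $\phi$-weighted $L^p$ equipartition $\{\Om_1,\dots,\Om_n\}$ of $\overline u$ (provided by the modified Payne--Weinberger machinery of Section \ref{s:s2}) into convex cells which become thin, as $n\to\infty$, in one fixed direction, so that each $\Om_i$ is a slender slice whose diameter is, up to a vanishing error, the length of a chord of $\Om$ transverse to the slicing direction. Combining the mean value property \eqref{f:meanvalue} with the Payne--Weinberger bound \eqref{paynewein} applied on each cell,
\begin{equation*}
\mu_p(\Om,\phi)\ \ge\ \frac1n\sum_{i=1}^n\mu_p(\Om_i,\phi)\ \ge\ \frac1n\sum_{i=1}^n\Big(\frac{\pi_p}{D_{\Om_i}}\Big)^p .
\end{equation*}
The extra term will be harvested from the cells whose diameter is appreciably smaller than $D_\Om=1$: if $D_{\Om_i}\le1-\tfrac\delta2$ then $(\pi_p/D_{\Om_i})^p\ge(\pi_p)^p\big(1+\tfrac{p\delta}{2}\big)$, while all remaining cells contribute at least $(\pi_p)^p$.

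\emph{The thick case: geometry of the cells.} Here I would use two facts. First, the diameter constraint $D_\Om=1$ forces the chords of $\Om$ transverse to the slicing direction of length $\ge1-\delta$ to sweep out only a thin band, of width $O(\sqrt\delta)$ and hence of area $O(\sqrt\delta)$. Second, $|\Om|\ge\tfrac12\eta_\Om\ge\tfrac12\eta_0$ (inscribe in $\Om$ the convex hull of the horizontal diameter $[0,1]\times\{0\}$ and a vertical chord of length $\eta_\Om$), while by the area control of Proposition \ref{p:proparea} each cell satisfies $|\Om_i|\ge\caseven|\Om|/n$, so that a region of area $A$ can be met by at most $An/(\caseven|\Om|)$ cells. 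Choosing $\delta$ of order $(\caseven\eta_0)^2$, these facts show that at most $n/2$ of the cells meet the bad band, hence at least $n/2$ of them have $D_{\Om_i}\le1-\tfrac\delta2$ once $n$ is large enough that the transverse thickness of each cell is negligible. Inserting this into the displayed inequality yields $\mu_p(\Om,\phi)\ge(\pi_p)^p+\tfrac{p\delta}{4}(\pi_p)^p$; since $p(\pi_p)^p>2$ (recorded at the beginning of Section \ref{sec:proof1}) and $(7\cdot16\cdot256)^{-2}$ is vastly smaller than any fixed multiple of $(\caseven\eta_0)^2$, a careful but deliberately non-optimal bookkeeping of the numerical constants $2$, $7$, $16$, $256$ produces exactly \eqref{f:formula22}.

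\emph{Main obstacle.} The delicate point is precisely this geometric/combinatorial step: showing \emph{quantitatively} that a definite proportion of the equipartition cells have diameter smaller than $D_\Om$ by an amount comparable to $\eta_0^2$. This requires combining the chord-confinement estimate with the non-degeneracy of the cells guaranteed by Proposition \ref{p:proparea}, and handling with care the vanishing — but nonzero, at finite $n$ — transverse thickness of the cells; everything else reduces to arithmetic with the explicit constants of Theorem \ref{t:Ngap}.
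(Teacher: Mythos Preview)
Your thin case $\eta_\Om\le\eta_0$ is fine: Lemma \ref{l:lemma1} applies directly and the constants compare as you say. The thick case, however, has two genuine gaps. First, the partition you invoke does not exist in the form you need: a $\phi$-weighted $L^p$ equipartition imposes \emph{two} constraints per cut (zero $p$-mean \emph{and} equal $L^p$ mass), so in dimension $N=2$ you cannot realise it by cuts in one fixed direction --- Section~\ref{s:s2} provides no such construction, and the ``narrow in $N-2$ directions'' trick from the rigidity proof is vacuous for $N=2$. Second, even granting slab-like cells, the implication ``a region of area $A$ is met by at most $An/(\caseven|\Om|)$ cells'' is false as stated: the lower bound on $|\Om_i|$ controls how many cells are \emph{contained} in a region, not how many \emph{intersect} it. For genuine horizontal slabs one could recover a height lower bound from the area bound and then count, but you have not established that the cells are slabs.

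The paper's route is quite different and avoids both obstacles. There is no dichotomy on $\eta_\Om$; instead one takes a $\phi$-weighted \emph{measure} equipartition with a \emph{fixed} number of cells $n=[6/\eta_0]+1$, so that each cell has $|\Om_i|=|\Om|/n$ and hence $D_{\Om_i}\eta_{\Om_i}\le 2|\Om_i|\le\tfrac13\eta_0$. One then shows, via two short preliminary steps, that any convex set with $D\le1$ and $D\cdot\eta\le\tfrac13\eta_0$ satisfies $\mu_p\ge(\pi_p)^p+\min\{\tfrac{(\caseven)^2}{(8\cdot256)^2}\eta^2,\,1\}$ (either $D$ is so small that Payne--Weinberger alone gives the $+1$, or after rescaling to unit diameter the set has $\eta\le\eta_0$ and Lemma~\ref{l:lemma1} applies). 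Since for the measure equipartition one still has $\int_{\Om_i}|\overline u|^{p-2}\overline u\,\phi=0$, the restriction $\overline u|_{\Om_i}$ is admissible and $\mu_p(\Om,\phi)\ge\min_i\mu_p(\Om_i,\phi)$; finally $\eta_{\Om_i}\ge|\Om_i|=\tfrac1n|\Om|\ge\tfrac{1}{2n}\eta_\Om$ transfers the gain back to $\eta_\Om$. The point is that you never need to estimate cell diameters or count bad cells: the product $D_i\eta_i$ is controlled by area alone, and Lemma~\ref{l:lemma1} is reused on each cell.
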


		  {\bf Proof of Lemma \ref{l:lemma1}}. 
		  By an approximation argument,  it
 is not restrictive to assume that
 $\Om$  and $\phi$ are smooth,  so that a first eigenfunction $\overline u$ for $\mu _ p (\Om, \phi)$ belongs to $\mathcal C ^ 2 (\overline \Om)$.  

		  For every $n\in \N$, we consider a
 $\phi$-weighted $L^p$ equipartition $\{ \Om _1, \dots, \Om _n \}$ of $\overline u$ in $\Om$. 
 The leading  idea is that, 
 if for a good proportion of cells (say $\frac{n}{2}$), we have a lower bound with an excess term, of the form $\mu _ p (\Om _i , \phi)  \geq (\pi _ p ) ^ p + \delta$ for some $\delta >0$, 
then by using the mean value property \eqref{f:meanvalue} we obtain 
\begin{equation}\label{f:pezzetto} 
 \mu _ p (\Om, \phi)  \geq \frac{1}{n} \sum _{i = 1} ^n \mu _ p (\Om_i, \phi  ) \geq \frac{1}{n} \Big [ \frac{n}{2} {(\pi _ p ) ^p  } + \frac{n}{2}  { \big  ((\pi _ p ) ^p  + \delta \big ) } \Big ] = (\pi _ p ) ^p   + \frac{\delta}{2} \,.
 \end{equation} 
Thus we distinguish a list of cases, trying to individuate 
which sub-families of cells  may give rise to an excess, and  taking care to control their proportion. 

Relying on the results  proved in the previous sections, 
we are in a position to closely follow \cite{ABF24}, 
   with the additional purpose of tracking all the involved constants. 
Below  $\  c_0 \  $ denotes  a number in $(0, 1)$, whose value will be fixed at the end of the proof.  
 In each case we determine the validity of  inequality \eqref{f:formula11} for some constant  depending on  $p$, $m$ and also on $ c_0  $ 
 (but independent of $\Om$). The final constant
will be given by the minimum among such constants found in the different cases, once fixed $  c_0  $.  
Throughout the proof we write for brevity $\eta$ in place of $\eta _\Om$. 

In order to track the geometry of the cells of the partition, we are going to fix the diameter of $\Om$ as being the segment $ [- \frac 1 2, \frac 1 2 ] \times \{0\} $.

\medskip
-- {\it Case 1}: 
For a sequence of integers $n$,  there exists $\mathcal I _n \subset  \{1, \dots, n\}$  with  ${\rm card} ( \mathcal I _n ) \geq \frac{n}{2}$, such that, for every $ i \in \mathcal I _n$,  the diameter $D_i$ of $\Om _i$ 
satisfies $D_i \leq \sqrt{ 1 -   c_0    \eta ^ 2}$. Then, for cells $\Om _i$ with
$i \in \mathcal I _n$, we have
 
$$\mu _ p (\Om_i, \phi )  \geq 
 \Big ( \frac{\pi_p}{D_i}\Big ) ^ p  \geq 
 \frac{ (\pi_p )^p  }{ ( 1 -  c_0  \eta ^ 2 ) ^  {\frac{p}{2} }   }
  \geq  
(\pi _ p) ^ p \big  ( 1 +  \frac{p}{2}   c_0  \eta ^ 2 \big  )  \,,     $$ 
where the last inequality holds by convexity on $(0, 1)$ of the map $t \mapsto (1-t) ^ { - \frac{p}{2}}$. 
In view of \eqref{f:pezzetto},  we conclude that   in Case 1 
the quantitative inequality holds under the form 
   \begin{equation}\label{f:caeight}
\mu_p (\Omega, \phi)\ge   (\pi_p  ) ^ p   +     (\pi _ p)  ^ p \frac{p}{4}  c_0    \eta _\Om ^2.  
 \end{equation}

\smallskip

-- {\it Case 2}: For a sequence of integers $n$, 
there exists $\mathcal I' _n \subset  \{1, \dots, n\}$  with  ${\rm card} ( \mathcal I' _n ) \geq \frac{n}{2}$ such that,  for every $i \in \mathcal I' _n$,  
the diameter $D_i$ of $\Om _i$ satisfies  $D_i \geq \sqrt{ 1 - c_0 \eta ^ 2}$. In this case, choosing  
 
\begin{equation}\label{f:choicea1} 
\eta _0 <  \frac 16  \, , \qquad  \text {and } \qquad 6 \eta _0 ^ 2  \leq a \leq  \frac{1}{4},
\end{equation}   the cells $\Om _i$ for $i \in \mathcal I ' _n$  must intersect the vertical lines 
$ \{x_1= - \frac{1-a} {2} \}$ and $\{x_1=\frac{1-a} {2} \ \}$.   Indeed, if  $\Om _i$ would not intersect such vertical lines, $\Om$ would have a projection  onto the vertical axis of length  strictly larger that 
$$\sqrt {D_i ^ 2 - ( 1 - \frac{a}{2} \big ) ^ 2 } \geq \sqrt { 1-   c_0   \eta ^ 2 -  ( 1 - \frac{a}{2} \big ) ^ 2  } >  \eta _0 \sqrt{ 5 -  c_0 }   \geq 2 \eta _0 \geq w ^ \perp\,,$$ 
yielding a contradiction.   We infer that,
for $i \in \mathcal I' _n$, 
 the cells $\Om _i$ 
can be ordered vertically upon the fixed diameter $[- \frac 1 2, \frac 1 2] \times \{ 0 \}$ of $\Om$,  and their intersections with the strip $\big ( - \frac {1-a} 2  , \frac {1-a} 2 \big ) \times \R$  have a polygonal boundary.  

\smallskip
 
-- {\it Case 2.1}: For a sequence of integers $n$,  there exists
$ \mathcal J _n \subset\mathcal  I' _n$, with  ${\rm card} ( \mathcal J_n ) \geq \frac{n}{4}$, such that,  for every $i \in \mathcal J _n$,  the polygonal boundary of 
$\Om _i$  inside the strip $\big ( - \frac {1-2 a} 2  , \frac {1-2 a} 2 \big ) \times \R$ contains some vertex.
 Such a vertex of $\Om _i$ inside the strip belongs also to the boundary of a neighbouring cell $\Om _j$, whose  the diameter $D_j$  satisfies
$$D_j \leq \sqrt{ ( 1-a) ^ 2 + 4 \eta ^ 2 } \leq \sqrt { 1 - 6 \eta _0 ^ 2 }  \,,
$$
where the last inequality holds by the choice of $a$.  Since $\Om _j$ 
can touch at most another cell $\widetilde \Om _i$ with $i \in \mathcal J _n$, we conclude that there exists $\mathcal W _n \subset \{ 1, \dots, n \}$ with ${\rm card} ( \mathcal W _n) \geq \frac{n}{8}$,   such that  
$$\mu _ p (\Om_j, \phi )  \geq 
 \Big ( \frac{\pi_p}{D_j}\Big ) ^ p  \geq 
 \frac{ (\pi_p )^p  }{ ( 1 - 6 \eta_0 ^ 2 ) ^  {\frac{p}{2} }   }
  \geq  
(\pi _ p) ^ p \big  ( 1 +   3 p \eta_0 ^ 2 \big  )  \qquad \forall j \in \mathcal W_n\,. $$ 
Applying  \eqref{f:pezzetto}, we conclude that,  in Case 2.1, the  quantitative inequality 
holds under the form 
 
 \begin{equation}\label{f:canine}
 \mu_p (\Omega, \phi)\ge   (\pi_p ) ^ p   +    (\pi _ p)  ^ p   \frac{3 p}{8}    \eta_\Om^2. 
\end{equation}

\smallskip

-- {\it Case 2.2}: For a sequence of integers $n$, there exists
$ \mathcal  J' _n \subset \mathcal  I'_n$, with ${\rm card} ( \mathcal  J' _n ) \geq \frac{n}{4}$, such that, for every $i \in \mathcal J' _n$,  
$\Om _i $  has no vertex in the strip.
  For such cells,   when $\eta$ is sufficiently small, the 
profile function $h _i$  in direction orthogonal to their diameter is affine on $\big (- \frac{ D_i-4a}{2}, \frac{D_i-4a}{2} \big ) \times \{0\}$
(using a local coordinate system in which the diameter of $\Om _i$ is along the horizontal axis and centred at the origin). 
Specifically, 
a geometric argument
shows that this is true provided 
		\begin{equation}\label{f:arcsin}
			\frac{a}{\cos (\arcsin   (\frac {2\eta}{\sqrt{1-  c_0  \eta^2}}))}+ \eta   \frac {2\eta}{\sqrt{1-   c_0  \eta^2}}\le 2 a 
	\end{equation} 
(the argument is based on the fact that the angle $\alpha _i$ formed by the horizontal diameter of $\Om$ and the diameter of $\Om _i$ does not exceed $\arcsin 
\Big ( \frac{ 2 \eta} {\sqrt { 1 -   c_0   \eta ^ 2}} \Big )$, see the proof of Theorem 1 in \cite{ABF24} for the detailed explanation).  
We claim that condition \eqref{f:arcsin} is satisfied by choosing 
\begin{equation}\label{f:eta01}  \eta_0 \le \frac 16 \,. 
\end{equation}
  which is always true by \eqref{f:choicea1}.  

Indeed, if \eqref{f:eta01} holds,  we have 
$$\frac {2\eta}{\sqrt{1-   c_0  \eta^2}}  \le \frac {2\eta_0}{\sqrt{1- \eta_0^2}} \le 4 \eta_0 $$
and
$$\cos (\arcsin   (\frac {2\eta}{\sqrt{1-   c_0  \eta^2}})) \ge \cos (\arcsin   (4 \eta_0))\ge \cos  \big (\arcsin   (\frac 23) \big ) \ge \frac 23\,,$$
so that the inequality \eqref{f:arcsin} is satisfied as soon as  $$\frac 32 a+ \frac {2\eta_0^2}{\frac  {  \sqrt {35}  } {6}}\le 2a,$$
which is true from the choice $a \ge 6 \eta_0^2$ made above (cf. \eqref{f:choicea1}). 
  
 Then we distinguish two further sub-cases.

\smallskip

-- {\it Case 2.2.1}: For 
 a sequence of integers $n$,  there exists 
$  \mathcal  S _n\subset {\mathcal J}'_n$,  with  ${\rm card} ( \mathcal  S_n ) \geq \frac{n}{8}$, such that, 
   $$\displaystyle \frac{ \int _{\Om _i}  |\nabla \overline u| ^ 2    \phi  }{\int _ {\Om _i}  | \overline u| ^ 2    \phi  }  
\geq  (\pi _ p) ^ p +  \frac{1}{4}   \qquad \forall i \in \mathcal S _n \,.$$  
  In this case, applying    \eqref{f:pezzetto}, we conclude that,  
  in Case 2.2.1, the inequality \eqref{quantitative3dp} holds under the form
 
  \begin{equation}\label{f:caten}
\mu_p (\Omega, \phi)\ge   (\pi_p ) ^ p   +    \frac{1}{32}    \eta_\Om^2. 
\end{equation}

\smallskip

-- {\it Case 2.2.2}: 
For 
 a sequence of integers $n$, there exists 
$  \mathcal  S _n' \subset \mathcal  J'_n$  with ${\rm card} (\mathcal   S'_n ) \geq \frac{n}{8 }$, such that,  for every $i \in \mathcal S'_n$, 
  $$\displaystyle \frac{ \int _{\Om _i}  |\nabla \overline u| ^ 2    \phi  }{\int _ {\Om _i}  | \overline u| ^ 2    \phi  }  
\leq  (\pi _ p) ^ p +  \frac{1}{4}   \qquad \forall i \in \mathcal S' _n \,.$$

  We observe that  there exists $\mathcal   S _n'' \subset \mathcal  S'_n$,  with  ${\rm card} ( \mathcal  S'' _n ) \geq \frac{n}{16}$, such that  
$$|\Om_i| \leq \frac{17}{n} |\Om| \quad \forall i \in\mathcal   S _n''\,.$$   

 For every $i \in \mathcal S _n ''$, 
 by applying Lemma 26 in \cite{ABF24} we obtain that, denoting by 
  $I _{ D_i}$ a diameter of $\Om _i$ and by $h _ i$ the function   giving the $\mathcal H ^ {1}$  measure of the section of $\Om _i $ in direction orthogonal to $I _ { D_i}$,   for $n$ sufficiently large it holds 
    \begin{equation}\label{f:formula}
 \frac{ \int _{\Om _i}  |\nabla \overline u| ^ 2    \phi }{\int _ {\Om _i}  | \overline u| ^ 2    \phi    }    
\geq   \Big \{ \mu _ p ( I _ {D_i}, h_i  \phi ) - \frac{  \alpha _n  |\Om_i|  }{\frac{1}{n}   } \Big  [ 1 +  \frac{3}{2} \mu _ 1 ( I _ {D_i  } , h_i  \phi )  \Big ] \Big \} \,, 
 \end{equation}   
  where $\alpha_n$ is an infinitesimal as $n \to+ \infty$ (independent of $i$). 
 From the condition $|\Om_i| \leq \frac{17}{n} |\Om|$,   the above inequality implies
$$ \displaystyle \frac{ \int _{\Om _i}  |\nabla \overline u| ^ 2    \phi  }{\int _ {\Om _i}  | \overline u| ^ 2    \phi   }        \geq   
  \Big \{ \mu _ p ( I _ {D_i}, h_i  \phi )  \Big [   1 -   \frac{3}{2}  \cdot 17     |\Om|  \cdot \alpha_n    \Big ]   - 
    17   |\Om| \cdot   \alpha_n  \Big \} \,.
$$
Since $\alpha_n$ is infinitesimal as $n \to + \infty$,  for $n$ sufficiently large it holds 

$$\frac{ \int _{\Om _i}  |\nabla \overline u| ^ 2    \phi }{\int _ {\Om _i}  | \overline u| ^ 2    \phi    }     \geq \frac{ (\pi _ p) ^ p +  \frac{1}{2}   }{ (\pi _ p) ^ p + 1  } 
\mu _ p (I _ {D_i}, h_i \phi )- \frac{1}{4} \,,$$ 
or
$$ \mu _ p (I _ {D_i}, h_i \phi )  \leq \frac{  \Big [  \frac{ \int _{\Om _i}  |\nabla \overline u| ^ 2    \phi }{\int _ {\Om _i}  | \overline u| ^ 2    \phi    }  + \frac{1}{4}   \Big ]  }{ (\pi _ p) ^ p +  \frac{1}{2} }  ((\pi _ p) ^ p + 1 ) \leq   (\pi _ p) ^ p + 1  \,. $$

We are thus in a position to apply   Proposition \ref{p:1d}, more precisely the consequent  inequality \eqref{f:1dqbis.1}. 
  To that aim, we  need to impose the further condition
\begin{equation}\label{f:choicea3}
 2   a \leq \casix\,.
\end{equation}

Provided $D_i \geq d_p$ (being $d_p$ defined by \eqref{f:dp}), we have  
  $$\mu _ p (I _ {D_i}, h_i \phi) \geq (\pi _ p) ^ p +   \cafive  \frac 23 \min _{[ - \frac{D_i}{2} + \casix, \frac{D_i}{2}  - \casix]} \Big ( \frac{h_i'}{h_i} \Big ) ^ 2   \qquad \forall i \in \mathcal S'' _n\,.$$
  
  Notice that, by the definition of $d_p$, if $D _i \leq d_p$, it holds $\mu _ p \geq (\pi _p) ^ p + 1$, so that we are reduced back to the situation of  Case 2.2.1, and the inequality \eqref{quantitative3dp} is satisfied with 
$\ka _0$ replaced by    $ \frac{1}{32}   $.

Then we continue distinguishing two final sub-cases. 
\smallskip

--  {\it Case 2.2.2.1}: For a sequence of integers $n$, 
there exists  $\mathcal  Z _n \subset \mathcal  S'' _n$,  with  ${\rm card} ( \mathcal  Z_n ) \geq \frac{n}{32}$, such that 
$$\frac 2 3 \cafive  \min _{[ - \frac{D_i}{2} + \casix, \frac{D_i}{2}  - \casix]} \Big ( \frac{h_i'}{h_i} \Big ) ^ 2      \geq  c_0   \eta ^ 2 \qquad \forall i \in \mathcal Z_n\,.$$ 
In this case, by the usual dimension reduction argument (cf. \eqref{f:formula}) and 
and  mean value inequality
(cf. \eqref{f:pezzetto}),   we obtain  that the quantitative inequality holds under the form  

\begin{equation}\label{f:caeleven}
 \mu_p (\Omega, \phi)\ge   (\pi_p ) ^ p   +    \frac{  c_0 }{32}       \eta_\Om^2.  
\end{equation} 

\smallskip
 
--  {\it Case 2.2.2.2}: For a sequence of integers $n$, 
there exists $\mathcal Z' _n \subset\mathcal   S''_n$,  with  ${\rm card} ( \mathcal   Z'_n ) \geq \frac{n}{32}$, such that, for every $ i \in \mathcal Z'_n$,  
$$\frac 2 3 \cafive  \min _{[ - \frac{D_i}{2} + \casix, \frac{D_i}{2}  - \casix]}  \Big ( \frac{h_i'}{h_i} \Big ) ^ 2      \leq  c_0   \eta ^ 2   \qquad \forall i \in \mathcal Z' _n\,.$$   
To conclude the proof, it remains to show that this case cannot occur for a suitable choice of $  c_0  $, $a$ and $\eta_0$. 
Assume that 

\begin{equation}\label{f:eta02}
 \eta_0 \le \frac 1 2 \sqrt { \frac{2 \cafive}  {3} } \,. 
 \end{equation}
  Then, since $  c_0    < 1$,
the above inequality  implies that the minimum and the maximum of $h _i$ on the interval
$[ - \frac{D_i}{2} + \casix, \frac{D_i}{2}  - \casix] $ satisfy 
\begin{equation}\label{f:hminmax}\frac{ h _ i ^ {min}  }{ h _ i  ^ {max} }  \geq \frac{1}{2} \,.
\end{equation} 

  Provided $a$ and $\eta _0$ are suitably chosen,  the  inequality \eqref{f:hminmax} remains valid replacing
$h _i ^ {min}$ and $h _ i ^ {max}$  respectively by the lengths $L _i ^ {min}$ and $L _ i ^ {max}$ 
of the intersections of $\Om _i$ with 
$ \{x_1= - \frac{1-4a} {2} \}$ and $\{x_1=\frac{1-4a} {2} \}$, since it holds
\begin{equation}\label{f:Lminmax}
\frac{ L _ i ^ {min}  }{ L _ i  ^ {max} }  \geq \frac{ h _ i ^ {min}  }{ h _ i  ^ {max} }  \geq \frac{1}{2} \,.
\end{equation} 
Precisely, thanks to a geometric argument based on Thales Theorem (see the proof of Theorem 1 in \cite{ABF24}),
 the condition under which   \eqref{f:Lminmax} is valid that 
\begin{equation}\label{f:thales} a \geq h _ i ^{max} \cdot  \sin \alpha _i 
\end{equation} 
Since 
$$h _ i ^{max} \leq  2 h _ i ^{min}  \leq \frac{4 \eta}{D_i} \leq  \frac{4 \eta}{\sqrt { 1 -   c_0  \eta ^ 2}}   \quad  \text{ and } \quad  \sin \alpha _ i \leq  \frac{2 \eta}{\sqrt { 1 -   c_0   \eta ^ 2}}  \,,$$ 
we see that  \eqref{f:thales} is fulfilled provided
$$\frac{ 8 \eta ^ 2 }{ 1 -   c_0   \eta ^ 2} \leq  a\,.$$
 Since, from \eqref{f:eta01}, we have
$$\frac{ 8 \eta ^ 2 }{ 1 -  c_0   \eta ^ 2} \leq  \frac{ 8 \eta _0^ 2 }{ 1 -  \eta _0^ 2}  \leq \frac{36\cdot 8}{35} \eta _ 0 ^ 2\,,$$ 
 in order  to have \eqref{f:Lminmax}, it is enough to choose $a$ so that

\begin{equation}\label{f:choicea2} 
a \geq \frac{36\cdot 8}{35} \eta _ 0 ^ 2 \,,
\end{equation}

By using \eqref{f:Lminmax} and 
the control on the area of each cell given by Proposition \ref{p:proparea},  we obtain 
 $$L _i ^ {min} \geq \frac{1}{2} L _i ^ {max} \geq \frac{1}{2} |\Om_i| \geq \frac{\caseven}{2n} |\Om| \geq  \frac{\caseven}{4 n} \eta\,.$$

Now we are ready to reach a contradiction, for a suitable choice of the constant $  c_0  $. 
The geometric argument is the same as in   \cite{ABF24}. Specifically, we consider the quadrilateral $A_1B_1A_2B_2$, being $A_1 B_1$  and $A_2B_2$ respectively the diameters of the bottom and top  cells in $\mathcal Z'_n$. From \eqref{f:Lminmax} we have 
$$\big (  \overline{A_ 1 A_2} \wedge \overline{B_1 B_2}  \big ) \geq  \frac {n}{32}  \frac{1}{2} L _i ^ {min} \geq  \frac{\caseven}{256} \eta  \,.$$ 
while recalling that $\mathcal  Z' _n \subset \mathcal I ' _n$, we have 
$$ \big (  \overline{A_ 1 B_1} \wedge \overline{A_2 B_2}  \big ) \geq  \sqrt {1 -   c_0   \eta ^2 }$$ 
Eventually, combining the last two inequalities and applying the generalized  Pythagorean Theorem, we see that one of the two diagonals of the quadrilateral would have length  strictly larger than $1$, provided
 $$c_0<  \Big ( \frac{\caseven}{256}  \Big ) ^ 2 \,.$$  
 Summarizing, we have shown that Case 2.2.2.2. cannot hold, if we choose 
  
\begin{itemize}
\item{} $\eta _0$ satisfying the first inequality in \eqref{f:choicea1}, and \eqref{f:eta02}\,, 
\smallskip

\item{} $a$ satisfying  the upper bound in \eqref{f:choicea1}, \eqref{f:choicea3}, and \eqref{f:choicea2},  
\smallskip

\item{} $  c_0  :=  \min \{ \frac{ (\caseven) ^ 2 }{2 (256)^2}  , 1 \}
= \frac{ (\caseven) ^ 2 }{2 (256)^2}  $ (the latter equality being a consequence of \eqref{f:caseven})\,.
\end{itemize}
\smallskip 

Noticing that \eqref{f:choicea3} combined with    \eqref{f:choicea2}    demands that 
$$\eta_0 \leq \frac{\sqrt  {35 \cdot \casix }}{  24 } \,, $$

our proof is then achieved, by choosing
$$ \eta _0  = \min  \Big \{ \frac{1}{6}, \frac{1}{2} \sqrt{ \frac{2 \cafive}{3} } , \frac{\sqrt  {35 \cdot \casix }}{  24  }     \Big \} \,.  $$   
and taking in front of the flatness term the smallest among the constants appearing in the quantitative inequalities
\eqref{f:caeight}, \eqref{f:canine}, \eqref{f:caten} and \eqref{f:caeleven}, 
 obtained respectively
in Case 1, Case 2.1, Case 2.2.1, and Case 2.2.2.1.
Its value is given by 
$$  \min \big \{(\pi _ p)  ^ p \frac{p}{4}   c_0   , (\pi _ p)  ^ p   \frac{3 p}{8} , \frac{1}{32}  , \frac{  c_0 }{32}  \big \} = \frac{c_0}{32} = 
\frac{ (\caseven) ^ 2 }{(8 \cdot 256)^2}  
\,,   $$
where the first equality is a straightforward consequence of the inequalities $c_0 \leq 1$ and $(\pi _p) ^ p \geq 2$ (cf.\ Remark \ref{r:pip}).

We finish the proof observing that 
$$ \eta _0  = \min  \Big \{ \frac{1}{6}, \frac{1}{2} \sqrt{ \frac{2 \cafive}{3} } , \frac{\sqrt  {35 \cdot \casix }}{  24   }     \Big \} =  \frac{1}{2} \sqrt{ \frac{2 \cafive}{3} }.$$
   In order to prove that  $\frac{1}{2} \sqrt{ \frac{2 \cafive}{3} }\le \frac{1}{6}$ it is enough to have 
$\cafive\le \frac 16$, which is a consequence of $\gamma >1, M<1, m\ge 0$ and $2e^{-2^3}<\frac 16$. 
In order to prove that $\frac{1}{2} \sqrt{ \frac{2 \cafive}{3} }\le \frac{\sqrt  {35 \cdot \casix }}{  24   }  $ it is enough to have $\cafive \le \frac 14 \casix$, which is a consequence of $\gamma >1, M<1, m\ge 0$ and $2e^{-2^3 y^2}< \frac {1}{ 16  y}$ for $y \ge 1$, with $y=\gamma^p$.

\qed

 \bigskip\bigskip 
 {\bf Proof of Lemma \ref{l:lemma2}}.  Let  $\Om$  and  $\phi$ satisfy the assumptions of Theorem \ref{t:Ngap}.  We divide the proof in three steps. 
 
 \medskip
 {\bf Step 1}. It holds
 $$D_\Om \leq 1 \, , \eta _\Om \leq \frac 2 3 \eta _0 \ \Rightarrow \ \mu _p (\Om, \phi) 
 \geq (\pi _p ) ^ p + \min \big \{  \frac{ (\caseven) ^ 2 }{(8 \cdot 256)^2}  
    \eta _\Om ^ 2 , 1  \big \} \,.$$ 
  Indeed, assume that $D_\Om \leq 1$ and $\eta _\Om \leq \frac 2 3 \eta _0$.  If $d_p$ is defined by \eqref{f:dp}, two cases may occur. If $D _\Om \leq d _p$, it holds
  $$\mu _p (\Om, \phi) \geq \frac{(\pi _p) ^ p }{d ^p} \geq (\pi_p) ^ p + 1 \,.$$  
 On the other hand, if $D_\Om \geq d_p$,  since $d _p \geq \frac{2}{3}$, we have
 $$\frac{\eta _\Om}{D_\Om} \leq \frac{3}{2} \eta _\Om \leq \eta _0\,.$$ 
  By Lemma \ref{l:lemma1},       we obtain
$$\mu _ p  \big (\frac{ \Om  }{ D_\Om},  \phi  (D_\Omega \; \cdot) \big )  \geq   (\pi _ p ) ^ p +   \frac{ (\caseven) ^ 2 }{(8 \cdot 256)^2}  
     \Big ( \frac{\eta _\Om}{D_\Om} \Big ) ^ 2 
\geq  (\pi _ p ) ^ p +  \frac{ (\caseven) ^ 2 }{(8 \cdot 256)^2}  
      \eta _\Om ^2\,. $$  

\medskip 

{\bf Step 2}. It holds
 $$D_\Om \leq 1 \, ,  D_\Om \cdot \eta _\Om \leq \frac 1  3 \eta _0 \ \Rightarrow \ \mu _p (\Om, \phi) 
 \geq (\pi _p ) ^ p + \min \big \{   \frac{ (\caseven) ^ 2 }{(8 \cdot 256)^2}  
    \eta _\Om ^ 2 , 1  \big \} \,.$$ 
  Indeed, assume that $D_\Om \leq 1$ and $ D_\Om \cdot \eta _\Om \leq \frac 1  3 \eta _0$. Two cases may occur. 
  If  $\eta _\Om \leq \frac 2 3 \eta _0$, we are done by Step 1. If  $\eta _\Om \geq \frac 2 3 \eta _0$,  we have $D _\Om \leq \frac{1}{2} \leq d _p$, and hence
  $$\mu _p (\Om, \phi)  \geq \frac{(\pi _p ) ^ p }{d_p} 
 \geq (\pi _p ) ^ p + 1 \,.$$  

\medskip
{\bf Step 3.} It holds
$$D _\Om = 1 \ \Rightarrow \mu_p (\Omega, \phi)\ge  ({\pi_p}) ^ p   +     \frac{ (\caseven) ^ 2  \eta _0 ^ 2 }{(7 \cdot 16 \cdot 256  )^2}  \eta  _\Om^2\,.$$

Indeed, let $n= [ \frac{6}{\eta _0} ] + 1$, so that $\frac{2}{n} \leq \frac{1}{3} \eta _0$. Let $\{\Om _1, \dots, \Om _n \} $ be a $\phi$-weighted 
measure equipartition  of  a first eigenfunction for $\mu  _p (\Om, \phi)$ in $\Om$. We have

$$D_{\Om _i }  \cdot \eta _{\Om _i}  \leq 2 |\Om _i |  =  \frac{2}{n}  |\Om |   \leq \frac{1}{3} \eta _0 D _\Om \eta _\Om \leq \frac{1}{3} \eta _0  \,.$$ 
Thus, by \eqref{f:meanvalue} and  Step 2, we have 

$$\mu _p (\Om, \phi) \geq \min _{i} \mu _p (\Om_i, \phi) \geq  (\pi _p ) ^ p + \min \big \{   \frac{ (\caseven) ^ 2 }{(8 \cdot 256)^2}  
     \eta _{\Om  _i} ^ 2 , 1  \big \} $$ 

Since
$$\eta _{\Om _i} \geq |\Om _i|   = \frac{1}{n} |\Om  | \geq \frac{1}{2n } \eta _\Om D _\Om  =  \frac{1}{2n } \eta _\Om \,.$$ 
 we infer that 
 $$\begin{aligned} \mu _p (\Om, \phi) & \geq  (\pi _p ) ^ p + \min \big \{   \frac{ (\caseven) ^ 2 }{(16 \cdot 256 )^2}  \frac{ \eta _\Om ^ 2 }{n ^ 2} 
   , 1  \big \} 
 \\ \noalign{\smallskip} & = 
   (\pi _p ) ^ p + \min \big \{    \frac{ (\caseven) ^ 2 }{(16 \cdot 256 )^2}  \frac{ \eta _\Om ^ 2 }{( [ \frac{6}{\eta _0} ] + 1  ) ^ 2 } 
   , 1  \big \} 
 \\ \noalign{\smallskip} & \geq 
   (\pi _p ) ^ p + \min \big \{  \frac{ (\caseven) ^ 2 }{(16 \cdot 256 )^2}  \frac{1}{( [ \frac{6}{\eta _0} ] + 1  ) ^ 2 } 
   , 1  \big \}    \eta _\Om ^ 2  
   \\ \noalign{\smallskip} &= 
  (\pi _p ) ^ p +   \frac{ (\caseven) ^ 2 }{(16 \cdot 256 )^2}  \frac{1}{( [ \frac{6}{\eta _0} ] + 1  ) ^ 2 }   \eta _\Om ^ 2   \,.
      \end{aligned} 
  $$  
  
  Finally we observe that 
  
  $$\big [ \frac{6}{\eta _0}  \big ] + 1 \leq \frac{6}{\eta_0} + 1 \leq \frac{6 + \eta _0} {\eta_0} \leq \frac{7}{\eta_0}\,,$$
  so that
  $$ 
 \mu _p (\Om, \phi) \geq 
 (\pi _p ) ^ p + 
    \frac{ (\caseven) ^ 2  \eta _0 ^ 2 }{(7 \cdot 16 \cdot 256  )^2} \eta _\Om ^2   
 $$ 
 
 \qed

\bigskip \bigskip 
{\bf Proof of Theorem \ref{t:Ngap} in dimension $N \geq 3$}. 
The passage to higher dimensions  follows exactly as in \cite[Section 6]{ABF24}, as a consequence of the two dimensional case
(and it is also likely that the involved constants might be tracked). 
\qed

 \section{Proof of Theorem \ref{t:stability} }\label{sec:proof2} 	
		 Let  $m \in \N$ be such that $\alpha \ge \frac 1m$. Assume by contradiction that, in the limit as $n \to + \infty$, the sequence $\{\Omega_n\}$ loses less than $(N-1)$ dimensions i.e.,  for some $l >1$, 
	 $$\lim _{n \to + \infty} a _ i ^n >0 \qquad \forall i \leq l\,.$$ 
	 Assume, without loosing generality, that ${\|\phi_n\|_\infty}=1$. 
	 Then,   there exist  an open convex set $\omega \subset \mathbb{R}^l$ and a $\big(\frac{1}{N-l +m}\big)$-concave function $\phi: \omega \to \mathbb{R}_+$, such that  	
	$$
	\pi _p ^p = \lim_{n \to + \infty}  \mu_p(\Omega_n, \phi _n) = \mu_p(\omega, \phi) > \pi_p ^p,
	$$    
	
	where the first equality holds by assumption, the second equality holds by Proposition \ref{abf209}
 (and Remark \ref{abf210}),
and the strict inequality holds by Proposition \ref{ambu20}. This proves statement (i), i.e.,  $a_i^n \to 0 \quad  \forall i \geq 2$.  From the proof of Proposition \ref{abf209}, we infer that	$\Om _n$  converge in Hausdorff distance to an  (open) interval $I$ of unit length. Moreover, subtracting sequences if necessary,  there exists a nonempty convex set $\Om\sq \R^N$  and a $\alpha$-concave function $\phi: \Om\to (0,1]$ with $\|\phi\|_\infty=1$ such that 
$$ T_n(\Om_n) \stackrel{H}{\longrightarrow}  \Om \qquad \mbox{ and } \qquad \phi_n (T_n^{-1}(x)) \to \phi(x)  \mbox{ for  a.e.  } x \in  \Om\,, $$  
and the sets  
$$
	\widetilde{\Omega}_n  = \big \{ (x,y) \in \R^{N+m} \ :\  \, x \in  T _n(\Omega_n) , \, \norma{y}_{\R^m} <\omega_m^{-\frac 1 m} \phi_n^{\frac{1}{m} }(T_n^{-1}(x))  \big \}
$$
  converge   to the  convex set $\widetilde \Om   \subset \R ^ {N+m}  $,   with   $I = \pi _ { \R \times \{0 \} ^ { N  -1 +m } }  (\widetilde \Om)$,  given by 
$$
	\widetilde{\Omega}  = \big \{ (x,y) \in \R^{N+m} \ :\  \, x \in  \Omega, \, \norma{y}_{\R^m} <\omega_m^{-\frac 1 m} \phi^{\frac{1}{m} }( x)  \big \}.
$$
  Moreover, setting  	
	 \begin{equation}\label{f:sections}\begin{aligned} 
	 S_{x_1}& := \big \{ x \in \widetilde \Om\ :\   \pi _ { \R \times\{0 \} ^ { N -1 +m }    }    (x) = x_1 \big \} \\
	 \noalign{\smallskip} &  =  \big \{(x_1,x',y):  (x_1,x') \in \Om, \norma{y}_{\R^m} <\omega_m^{-\frac 1 m} \phi^{\frac{1}{m} }( x_1,x')\big  \} \, , \end{aligned}\end{equation}

and 	 $$
	 \Phi (x_1):= \mathcal{H}^{  N-1+m   }  (S_{x_1})\,, $$
	we have	$$
	\pi _p ^p = \lim_n \mu_p(\Omega_n, \phi _n) = \mu_p(I,  \Phi ) \geq \pi_p^p\,.
	$$
We claim that the equality $\mu_p(I,  \Phi ) = \pi_p^p$ implies that $ \Phi $ is constant. For $p= 2$, this was already observed in  \cite[Section 2]{PW}.   For arbitrary $p \in (1, + \infty)$, this follows from inspection of the proof of Proposition \ref{p:1d}: precisely,  from the equality \eqref{f:inspection}, we see that, if $\mu_p(I,  \Phi ) = \pi_p^p$   the term $|\psi - \kappa|$ therein must vanish, namely that $(\log \Phi )' = \kappa$ for some $\kappa \leq 0$.
	In turn,  from inspection of the proof of Lemma 2.2 in \cite{FNT12}, we see that 
	the equality $\mu_p(I, \Phi ) = \pi_p^p$ implies that $\kappa = 0$. (More precisely one has to observe that, when $\mu_p(I,  \Phi ) = \pi_p^p$,  the inequality after fomula (2.4) in  \cite{FNT12}  holds with equality sign, which forces $\kappa$ to vanish.)

	Now, from the constancy of $\Phi $, we deduce that the following concavity inequality, due to Brunn-Minkowski Theorem,
$$		 \Phi ^{\frac{1}{ N-1+m  }}(tx_1+(1-t)x_2) \geq t \Phi ^{\frac{1}{  N-1+m  }}(x_1)+(1-t)\Phi ^{\frac{1}{  N-1+m   }}(x_2) \qquad \forall x_1, x_2 \in I\, , \forall t \in [0, 1]\,,
	$$
holds with equality sign, which is known to occur if and only if $S_{x_1}$ and $S_{x_2}$ agree up to a translation and a dilation  (see \cite{Schneider}). We conclude that all the sections of $\widetilde \Om$ are translations of a fixed one, say $S_0$, namely there exists a function $\tau :I \to \R ^{ {N+m}  }$, with $\tau (0) = 0$, such that 
$$S_ {x_1} = S _0 + \tau (x_1)\,.$$

By  the convexity of $\widetilde \Omega$, the function $\tau = \tau ( x_1)$ has to be linear, namely  there exists  a vector $v \in \R ^ {  N+m  }$  such that 
$$S_{x_1}=  S_0 +  x_1 v \,.$$  
  We notice that actually  $v = 0$. 
  Indeed, as a consequence of the above equality, combined with the structure of the sets $S_{x_1}$ (cf.\ \eqref{f:sections}),  all the $x_1$-sections of $\Om$, namely the sets of points $(x_1,x')$ belonging to $\Om$,  are translations of a fixed set by the vector $v$. This implies that $v = 0$, in view of the fact that the John ellipsoid of $\Om_n$ (and hence of $\Om$) is centred at the origin, with the axes along the cartesian directions.  Finally, the equality $S_{x_1} = S_0$ holding for every $x_1 \in I$, again combined with \eqref{f:sections}, yields that the function $\phi$ is independent of the $x_1$ variable. \qed

\bigskip
\bigskip
\bigskip 

\bibliographystyle{mybst}

\bibliography{References}

\end{document}